\newcommand{\Q}{{\bf Q}}
\newcommand{\be}{\beta}
\newcommand{\GK}{\mathbb{K}}
\newcommand{\beq}{\begin{equation}}
\newcommand{\eeq}{\end{equation}}
\newcommand{\gf}{generating function}
\newcommand{\gfs}{generating functions}
\newcommand{\fps}{formal power series}
\DeclareMathOperator{\TR}{TR}
\DeclareMathOperator{\df}{df}
\DeclareMathOperator{\dv}{dv}
\DeclareMathOperator{\cc}{c}
\DeclareMathOperator{\vv}{v}
\DeclareMathOperator{\ff}{f}
\DeclareMathOperator{\ee}{e}
\DeclareMathOperator{\Tpol}{T}
\DeclareMathOperator{\Ppol}{P}
\newcommand{\mM}{\mathcal{M}} 
\newcommand{\gM}{M} 
\newcommand{\MM}{M} 
\newcommand{\gtM}{\tilde{M}} 
\newcommand{\gtQ}{\tilde{Q}}  
\newcommand{\BM} {B} 
\newcommand{\gT}{T} 
\newcommand{\gB}{B} 
\newcommand{\gBT}{B^{ {\triangleleft}}} 
\newcommand{\ET}{E} 
\newcommand{\NT}{T} 
\newcommand{\mNT}{\mathcal{T}} 
\newcommand{\NQ}{Q} 
\def\emm#1,{{\em #1}}
\newcommand{\bx}{\bar x}
\newcommand{\by}{\bar y}
\newcommand{\bu}{\bar u}
\newcommand{\bv}{\bar v}
\newcommand{\bw}{\bar w}
\newcommand{\gQ}{Q}
\newcommand{\parfrac}[2]{\left( \frac{#1}{#2} \right)}
\begin{document}
\makebcctitle

\begin{abstract}
We present  recent results on the enumeration of $q$-coloured
planar maps, where each monochromatic edge carries a weight
$\nu$. This is equivalent to weighting each map by its Tutte
polynomial, or to solving the $q$-state Potts model on random planar maps. 
The associated \gf, obtained by Olivier Bernardi and the author, 
is differentially algebraic. That is, it satisfies a (non-linear) differential equation. The
starting point of this result is a functional equation written by Tutte in 1971,
which translates into enumerative terms a simple recursive description
of planar maps. The proof follows and adapts Tutte's solution of
\emm properly, $q$-coloured triangulations (1973-1984).

We put this work in perspective with the much better understood
enumeration of families of \emm uncoloured, planar maps, for which the recursive
approach almost
systematically yields algebraic \gfs.  In the past 15 years, these
algebraicity properties have been explained combinatorially by
illuminating bijections between maps and  families of plane trees. We survey both approaches, recursive and bijective.

Comparing the coloured and uncoloured results raises the question of 
designing bijections for coloured maps. No complete  bijective
solution exists at the moment, but we present bijections for
certain specialisations of the general problem. We also show that for
these specialisations, Tutte's functional equation is much easier
to solve that in the general case.

We conclude with some open questions.

\end{abstract}

\section{Introduction}

A planar map is a proper embedding in the sphere of a finite connected
graph, defined up to continuous deformation.  The enumeration of these objects has been a topic of constant
interest for 50 years, starting with a series of papers by Tutte  in the early
1960s; these papers were mostly based on recursive descriptions of
maps
(e.g.~\cite{tutte-triangulations}). The last 15
years have  witnessed a new burst of activity in this field, 
with the development of rich bijective approaches~\cite{Sch97,BDG-blocked}, and their
applications to the study of random maps of large size~\cite{le-gall-maps,marckert-miermont}. In such
enumerative problems, maps are usually \emm rooted, by orienting one edge.
Figure~\ref{fig:two-edges} sets a first exercise in map enumeration.

\begin{figure}[htb]
  \begin{center}
 \includegraphics[scale=0.4]{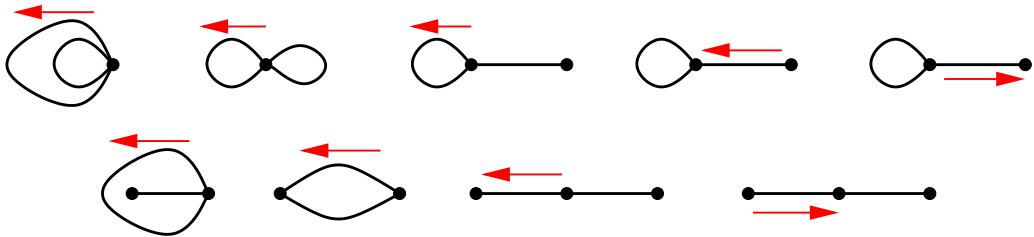}
  \caption{There are 9 rooted planar maps with two edges.}
\label{fig:two-edges}
  \end{center}
\end{figure}

Planar maps are not only studied in combinatorics and probability, but
also in theoretical physics.  In this context, maps are considered as
random surfaces, and constitute a model of 2-dimensional \emm quantum
gravity,. For many years, maps were studied independently in
combinatorics and in physics, and another approach for counting them, based on the
evaluation of certain matrix integrals, was introduced in the 1970s in
physics~\cite{BIPZ,BIZ}, and much developed since then~\cite{DFGZJ,mehta}. More recently, a fruitful
exchange started between the two communities. Some physicists
have become masters in combinatorial methods~\cite{BDG-planaires,bouttier-mobiles}, while the matrix
integral approach has been taken over by some probabilists~\cite{guionnet}.

From the physics point of view, it is natural to equip maps with 
additional structures, like particles, trees, spins,
and more generally classical models of statistical physics. In
combinatorics however, a huge majority of papers deal with the
enumeration of bare maps. There has been some exceptions to this rule
in the past few years, with combinatorial solutions of the Ising
and hard-particle models on planar maps~\cite{mbm-schaeffer-ising,BDG-hard-part-blossoming,BDG-blocked}. But there is also an earlier, and
major, exception to this rule: Tutte's  study of
properly $q$-coloured triangulations (Figure~\ref{fig:triang-coloree}).

\begin{figure}[ht!]\begin{center} \input{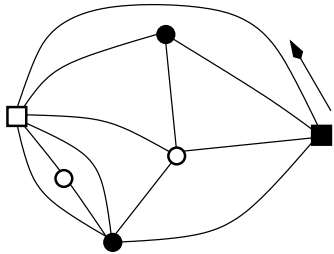}\caption{A (rooted) triangulation of the sphere, properly
  coloured with 4 colours.}\label{fig:triang-coloree} \end{center}\end{figure}

This ten years long study  (1973-1984) plays a
central role in this paper. For a very long time, it remained  an
isolated \emm tour de force, with no counterpart for other families of
planar maps or for more general colourings, probably because the
corresponding series of
papers~\cite{lambda12,lambda3,lambda-tau,tutteIV,tutteV,tutte-pair,tutte-chromatic-sols,tutte-chromatic-solsII,tutte-differential,tutte-chromatic-revisited}
looks quite formidable.  Our main point here is to report on recent advances in the
enumeration of (non-necessarily properly) $q$-coloured maps, in the
steps of Tutte. In the associated \gf, every monochromatic
edge is assigned a weight~$\nu$: the case $\nu=0$ thus captures proper
colourings. In physics terms, we are studying the $q$-state Potts
model on planar maps. A third equivalent formulation is that we count
planar maps weighted by their Tutte polynomial --- a 
bivariate generalisation of the chromatic polynomial, introduced by
Tutte, who called it the dichromatic polynomial. Since the Tutte
polynomial has numerous  interesting specialisations, giving for
example the number of trees, forests, acyclic orientations, proper colourings
of course,  or the partition function of the Ising model, or the reliability
and flow polynomials, we are covering several models at
the same time.

We shall put this work in perspective with the (much better
understood) enumeration of uncoloured maps, to which we devote
Sections~\ref{sec:uncoloured-rec} and~\ref{sec:bij-uncoloured}. We first present in
Section~\ref{sec:uncoloured-rec} the robust recursive approach found in the early work of
Tutte. It applies in a rather uniform way to many families of maps,
and yields for their \gfs\  functional equations that we call
\emm polynomial equations with one catalytic variable,. A typical
example is~\eqref{1cat-planaires1}. It is now
understood that the solutions of these equations are always algebraic,
that is, satisfy a polynomial equation. For instance, there are
$2\cdot 3^n {{2n}\choose n}/((n+1)(n+2))$ rooted planar
maps with $n$ edges, and their \gf, that is, the series
$$
M(t):=\sum _{n\ge 0}  \frac{2\cdot3^n}{(n+1)(n+2) }{{2n}\choose n} t^n,
$$
satisfies
$$
M(t)=1-16t+18tM(t)-27t^2M(t)^2.
$$

Thus algebraicity is
intimately connected with (uncoloured) planar maps. In Section~\ref{sec:bij-uncoloured},
we present two more recent bijective approaches that relate maps to
plane trees, which are algebraic objects \emm par excellence,.
Not only do these bijections give a
better understanding of algebraicity properties, but they also
explain why many families of maps are counted by simple formulas. 

In Section~\ref{sec:coloured-rec}, we discuss the recursive approach
for $q$-coloured 
maps. The corresponding functional equation~\eqref{eq:tM} was written in 1971
by Tutte ---who else?---, but was left untouched 
since then. It involves two ``catalytic'' variables, and it has been
known for a long time that its solution is not algebraic. The key point of this section, due to Olivier Bernardi and the author, is the solution of this
equation, in the form of a system of differential equations that
defines the \gf\ of $q$-coloured maps. This series is thus  \emm
differentially algebraic,, like Tutte's solution of properly coloured
triangulations. Halfway on the long path that leads to the solution
stands an interesting intermediate result: when $q\not = 4$ is of the
form $2 +2\cos (j\pi/m)$, for integers $j$ and $m$, the \gf\ of
$q$-coloured planar maps is algebraic. This includes the values
$q=2$ and $q=3$, for which we give explicit results.
 We also discuss certain
specialisations for which the equation becomes easier to solve, like
the enumeration of maps equipped with a bipolar orientation, or with a
spanning tree.

Since we are still in the early days of the enumeration of coloured
maps, it is not surprising that bijective approaches are at the moment
one step behind. Still, a few bijections are available for some of the
simpler specialisations mentioned above. They are presented in 
Section~\ref{sec:bij-coloured}. We conclude with open questions,
dealing with both uncoloured and coloured enumeration.

\medskip
This survey is sometimes written in an informal style, especially when
we describe bijections. Proofs are only
given when they are new, or especially simple and illuminating. The
reference list, although long, is certainly not exhaustive. In
particular, the papers cited in this introduction are just  examples
illustrating our topic, and should be considered as pointers to the
relevant literature. More references are given further
 in the paper. Two  approaches that have been used to count
maps are utterly absent from this paper: methods based on characters
of the symmetric group and symmetric functions~\cite{goulden-jackson,MR2442057}, which do not
exactly address the same range of problems, and the matrix integral
approach, which is powerful~\cite{DFGZJ}, but is not always fully
rigorous.
The Potts model has been addressed via matrix
integrals~\cite{daul,eynard-bonnet-potts,zinn-justin-dilute-potts}. We
refer to~\cite{bernardi-mbm} for a description our current understanding
of this work.

\section{Definitions and notation}
\label{sec:def}

\subsection{Planar maps}
%
A \emph{planar map} is a proper
 embedding of a connected planar graph in the
oriented sphere, considered up to orientation preserving
homeomorphism. Loops and multiple edges are allowed. The \emph{faces}
of a map are the connected components of 
its complement. The numbers of
vertices, edges and faces of a planar map $M$, denoted by $\vv(M)$,
$\ee(M)$ and $\ff(M)$,  are related by Euler's relation
$\vv(M)+\ff(M)=\ee(M)+2$.
 The \emph{degree} of a vertex or face is the number
of edges incident to it, counted with multiplicity. A map is \emm
$m$-valent, if all its vertices have degree $m$. A \emph{corner} is
a sector delimited by two consecutive edges around a vertex;
hence  a vertex or face of degree $k$ defines $k$ corners. The \emph{dual} of a
map $M$, denoted $M^*$, is the map obtained by placing a 
vertex of $M^*$ in each face of $M$ and an edge of $M^*$ across each
edge of $M$; see Figure~\ref{fig:example-map}. 

For counting purposes it is convenient to consider \emm rooted, maps. A map
is {rooted} by orienting an edge, called the \emph{root-edge}.
The origin of this edge is the \emm root-vertex,.
 The  face  that lies to the right of the root-edge
is the \emph{root-face}. In figures, we 
take the root-face
as the infinite face (Figure~\ref{fig:example-map}). 
This explains why we often call the root-face the \emm outer, (or: \emm
infinite,) face, and
its degree the \emm outer degree,. 
The other faces are said to be \emph{finite}. From now on, every {map}
is \emph{planar} and \emph{rooted}. By convention,
we include among rooted planar maps the \emph{atomic\/} map
$m_0$ having one vertex and no edge. The set of rooted planar maps is
denoted $\mM$. 

A map is 
 \emph{separable} if it is atomic
 or can be obtained by gluing two  non-atomic maps  at a vertex. 
Observe that both
 maps with one edge are non-separable.

\begin{figure}[ht!]\begin{center} \input{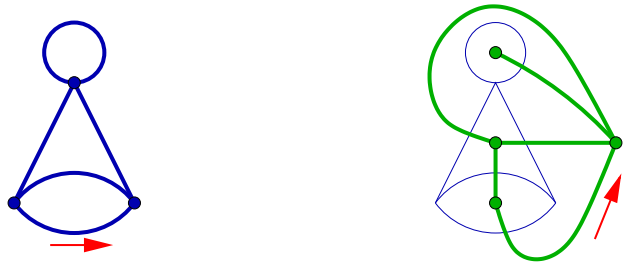}\caption{A rooted planar map and its dual
(rooted at the dual edge).}\label{fig:example-map} \end{center}\end{figure}
%

\subsection{Power series}
Let $A$ be a commutative ring and $x$ an indeterminate. We denote by
$A[x]$ (resp. $A[[x]]$) the ring of polynomials (resp. \fps) in $x$
with coefficients in $A$. If $A$ is a field, then $A(x)$ denotes the field
of rational functions in $x$, and $A((x))$ the field of Laurent series\footnote{A \emm Laurent series, is a series of the form $\sum_{n\ge n_0} a(n)
x^n$, for some $n_0\in \Z$.}
in $x$. These notations are generalised to polynomials, fractions
and series in several indeterminates. We 
denote by bars the reciprocals of variables: that is, $\bx=1/x$, so that $A[x,\bx]$ is the ring of Laurent
polynomials in $x$ with coefficients in $A$.
The coefficient of $x^n$ in a Laurent  series $F(x)$ is denoted
by $[x^n]F(x)$. 
 The \emm valuation, of a Laurent series
$F(x)$ is the smallest $d$ such that $x^d$ occurs in $F(x)$ with a
non-zero coefficient. If $F(x)=0$, then the valuation is $+\infty$.
If $F(x;t)$ is a  power series in $t$ with coefficients in
$A((x))$, that is, a series of the form 
$$
F(x;t)=\sum_{n\ge 0, i\in \Z} f(i;n) x^i t^n,
$$
where for all $n$, almost all coefficients $f(i;n)$ such that $i<0$ are zero, then  the
\emm positive part, of $F(x;t)$ in $x$ is the following series, which
has coefficients in $xA[[x]]$:
$$
[x^>]F(x;t):=\sum_{n\ge 0, i>0} f(i;n) x^i t^n.
$$
We define similarly the 
non-negative part 
of $F(x;t)$ in $x$.

 A power series $F(x_1, \ldots, x_k) \in \GK[[x_1, \ldots, x_k]]$, where $\GK$ is a
field, is \emm algebraic, (over $\GK(x_1, \ldots, x_k)$) if it satisfies a
polynomial equation $P(x_1, \ldots, x_k, F(x_1, \ldots,
x_k))=0$. The series $F(x_1, \ldots, x_k)$ is \emm D-finite, if for all $i\le k$, it satisfies a
(non-trivial) linear differential equation in $x_i$ with 
coefficients in $\GK[x_1, \ldots, x_k]$.
We refer to~\cite{lipshitz-diag,lipshitz-df} for a study of these
series. All algebraic series are D-finite. A series $F(x)$ is \emm
differentially algebraic, if it satisfies a (non-necessarily linear)
differential equation  with coefficients in $\GK[x]$.

\subsection{The Potts model and the Tutte polynomial}
Let $G$ be a graph with vertex set $V(G)$ and edge set $E(G)$.
Let $\nu$ be an indeterminate, and take $q\in \N$. 
A \emm colouring, of the vertices of $G$ in $q$ colours is a map $c : V(G)
\rightarrow \{1, \ldots, q\}$. An edge of $G$ is \emm monochromatic,
if its endpoints share the same colour. Every loop is thus
monochromatic. The number of monochromatic edges is denoted by $m(c)$.
The \emm partition function of the  Potts model,
on $G$ counts colourings by the number of monochromatic edges:
$$
\Ppol_G(q, \nu)= \sum_{c  : V(G)\rightarrow \{1, \ldots, q\}}
\nu^{m(c)}.
$$
The Potts model is a classical magnetism model in statistical physics, which
includes (for $q=2$) the famous Ising model (with no magnetic
field)~\cite{welsh-merino}. Of course, $\Ppol_G(q,0)$ is the chromatic
polynomial of $G$. 

If $G_1$ and $G_2$ are disjoint graphs and $G=G_1 \cup G_2$, then
clearly
\beq\label{Potts-disjoint}
\Ppol_{G}(q,\nu)=\Ppol_{G_1}(q,\nu)\Ppol_{G_2}(q,\nu).
\eeq
If $G$ is obtained by attaching $G_1$ and $G_2$ at one vertex,
then
\beq\label{eq:Potts-1components}
\Ppol_{G}(q,\nu)=\frac 1 q \,
\Ppol_{G_1}(q,\nu)\Ppol_{G_2}(q,\nu).
\eeq

The Potts partition function can be computed by induction on the
number of edges. If $G$ has no edge, then $\Ppol_G(q,\nu)=
q^{|V(G)|}$. Otherwise, let $e$ be an edge of $G$. Denote by $G\backslash
  e$ the graph obtained by deleting $e$, and by ${G\slash e}$ the graph obtained by contracting $e$ (if $e$ is a loop, then it is simply deleted). Then
\beq\label{Potts-induction}
\Ppol_{G}(q,\nu)=  \Ppol_{G\backslash e}(q,\nu)+(\nu-1) \Ppol_{G\slash e}(q,\nu).
\eeq
Indeed, it is not hard to see that $\nu\Ppol_{G\slash e}(q,\nu)$
counts colourings for which $e$ is monochromatic, while $\Ppol_{G\backslash
  e}(q,\nu)- \Ppol_{G\slash e}(q,\nu)$ counts those for which $e$ is
bichromatic.
One important consequence of this induction is that 
  $\Ppol_{G}(q,\nu)$ is
always a \emm polynomial, in $q$ and $\nu$.  We call
it the \emm Potts polynomial, of $G$. Since it is a polynomial, we
will no longer consider $q$
as an integer, but as an indeterminate, and sometimes evaluate
$\Ppol_{G}(q,\nu)$ at  
real values $q$. We also observe that $\Ppol_{G}(q,\nu)$ is a multiple
of $q$: this explains why we will  weight maps by $\Ppol_{G}(q,\nu)/q$.

Up to a change of variables, the Potts polynomial is equivalent to
another, maybe better known, invariant of graphs, namely the \emm Tutte polynomial,
$\Tpol_G(\mu, \nu)$ (see e.g. \cite{Bollobas:Tutte-poly}):
$$
\Tpol_G(\mu,\nu):=\sum_{S\subseteq
  E(G)}(\mu-1)^{\cc(S)-\cc(G)}(\nu-1)^{\ee(S)+\cc(S)-\vv(G)},
$$
where the sum is over all spanning subgraphs of $G$ (equivalently,
over all subsets of edges) and  $\vv(.)$, $\ee(.)$ and $\cc(.)$ denote
respectively the number of vertices, edges and connected
components. For instance, the Tutte polynomial of a graph with no edge
is 1. The equivalence with the Potts polynomial was established  by
Fortuin and Kasteleyn~\cite{Fortuin:Tutte=Potts}: 
\begin{eqnarray}\label{eq:Tutte=Potts}
\Ppol_G(q,\nu)~=~\sum_{S\subseteq E(G)}q^{\cc(S)}(\nu-1)^{\ee(S)}~=~(\mu-1)^{\cc(G)}(\nu-1)^{\vv(G)}\,\Tpol_G(\mu,\nu),
\end{eqnarray}
for $q=(\mu-1)(\nu-1)$. 
In this paper, we work with $\Ppol_G$ rather than $\Tpol_G$ because
we wish to assign real values to $q$ (this is more natural than
assigning real values to $(\mu-1)(\nu-1)$). However,  one
property  looks more natural in terms of $\Tpol_G$: 
if $G$ and $G^*$ are dual 
connected planar graphs  (that is, if $G$ and $G^*$ can be embedded
as dual planar maps) then  
$$
\Tpol_{G^*}(\mu,\nu)=\Tpol_G(\nu,\mu).
$$
Translating this identity in terms of   Potts polynomials thanks
to~\eqref{eq:Tutte=Potts} gives:  
\begin{eqnarray}
\Ppol_{G^*}(q,\nu)&=&q(\nu-1)^{\vv(G^*)-1}\Tpol_{G^*}(\mu,\nu)\nonumber\\
&=&q(\nu-1)^{\vv(G^*)-1}\Tpol_{G}(\nu,\mu)\nonumber\\
&=&\frac{(\nu-1)^{\ee(G)}}{q^{\vv(G)-1}}\Ppol_{G}(q,\mu),
\label{eq:duality-Potts-poly}
\end{eqnarray}
where $\mu=1+q/(\nu-1)$ and the last equality uses Euler's relation:
$\vv(G)+\vv(G^*)-2=\ee(G)$.

\section{Uncoloured planar maps: the recursive approach}
\label{sec:uncoloured-rec}
In this section, we describe the first approach that was used to count
maps: the recursive method. It is based on very simple combinatorial
operations (like the deletion or contraction of an edge), which
translate into non-trivial functional equations defining the
\gfs.  A recent theorem, generalising the so-called \emm quadratic
method,, states that the solutions of all
equations of this type  are algebraic. Since the recursive method
applies to many families of maps, numerous algebraicity results
follow.

\subsection{A functional equation for planar maps}
Consider a rooted planar map, distinct from the atomic map. Delete
the root-edge. If this edge is an isthmus, one obtains two connected components $M_1$ and
$M_2$, and otherwise a single  component $M$,
which we can root in a canonical way (Figure~\ref{fig:map-del}). Conversely, starting from
an ordered pair $(M_1, M_2)$ of maps, there is a unique way to connect
them by a new (root) edge. If one starts instead from a single map
$M$, there are $d+1$ ways to add a root edge, where $d=\df(M)$ is the
degree of the root-face of $M$ (Figure~\ref{fig:map-del2}). 

\begin{figure}[htb]
  \begin{center}
     \scalebox{0.6}{\input{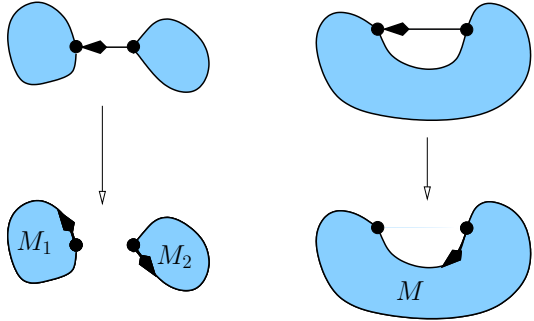}}
  \caption{Deletion of  the root-edge in a planar map.}
\label{fig:map-del}
  \end{center}
\end{figure}

\begin{figure}[htb]
  \begin{center}
  \scalebox{0.6}{\input{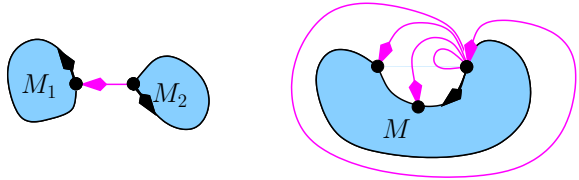}}
  \caption{Reconstruction of a planar map.}
\label{fig:map-del2}
  \end{center}
\end{figure}

Hence, to derive from this recursive description of planar maps a
functional equation for their \gf, we need to take into account the
degree of the root-face, by an additional variable $y$. Hence, let
$$
M(t;y)= \sum_{M\in\mM}t^{\ee(M)}y^{\df(M)}= \sum_{d\ge 0} y^d M_d(t)
$$
be the \gf\ of planar maps, counted by edges and outer-degree.
The series $M_d(t)$ counts  by edges maps with outer degree $d$.
The recursive description of maps  translates as follows:
\begin{eqnarray}
M(t;y)&= &1+ y^2 t  M(t;y)^2 + t\sum_{d\ge 0} M_d(t)(y+y^2+\cdots +
y^{d+1})\nonumber \\
&=& 1+ y^2 t  M(t;y)^2 +  t y \, \frac{yM(t;y)-M(t;1)}{y-1}. \label{1cat-planaires1}
\end{eqnarray}
Indeed, connecting two maps $M_1$ and $M_2$ by an edge produces a map
of outer-degree $\df(M_1)+\df(M_2)+2$, while the $d+1$ ways to add an
edge to a map $M$ such that $\df(M)=d$ produce $d+1$ maps of respective outer degree $1, 2,
\ldots, d+1$, as can be seen on Figure~\ref{fig:map-del2}. The term 1
records the atomic map.

The above equation was first written by Tutte in 1968~\cite{tutte-general}. It is
typical of the type of equation obtained in (recursive) map
enumeration. More examples will be given in Section~\ref{sec:more-func-eq}. One
important feature in this equation is the divided difference
$$
\frac{yM(t;y)-M(t;1)}{y-1},
$$
which prevents us from simply setting $y=1$ to solve for $M(t;1)$
first, and then for $M(t;y)$. The parameter $\df(M)$, and the
corresponding variable $y$, are said to be \emm catalytic, for this
equation --- a terminology borrowed to
Zeilberger~\cite{zeil-umbral}. 

Such equations do not only occur in connection with maps: they also arise
in the enumeration of polyominoes~\cite{bousquet-vcd,feretic-svrtan,temperley}, lattice
walks~\cite{bousquet-petkovsek-1,banderier-flajolet,de-mier,knuth,prodinger},
permutations~\cite{bousquet-stack,bousquet-butler,zeilberger-stack}... The
solution of these equations has naturally attracted some
interest. The  ``guess and check'' approach used in the early 1960s is now replaced by a general method, which we present below in
Section~\ref{sec:quad}.  This method implies in
particular that \emm the solution of  any
(well-founded) polynomial equation with one catalytic variable is
algebraic,. It generalises the \emm
quadratic method, developed by Brown~\cite{brown-square}  for equations of
degree~2 that involve a single additional unknown series (like
$M(t;1)$ in the equation above) and also the \emm kernel method, that applies
to linear equations, and seems to have first appeared in Knuth's {\it  Art of
Computer Programming}~\cite[Section~2.2.1,  Ex.~4]{knuth} (see also~\cite{hexacephale,bousquet-petkovsek-1,prodinger}).

\begin{figure}[b!]
  \begin{center}
       \scalebox{0.6}{\input{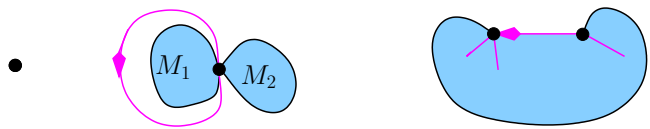}}
    \caption{Contraction of the root-edge in a planar map.}
\label{fig:map-enum-contract}
  \end{center}
\end{figure}

\medskip
\noindent{\bf Contraction vs.\ deletion.}
Before we move to more examples, let us make a simple
observation. Another natural way to decrease the edge number of a map
is to contract the root-edge, rather than  delete it (if this edge is a
loop, one just erases it). When one tries
to use this to count planar maps, one is lead to introduce the degree
of the root-vertex as a catalytic parameter, and a corresponding
variable $x$ in the \gf. 
This yields the same equation as above:
$$
\MM(t;x)= 1+ x^2 t  \MM(t;x)^2 + t\sum_{d\ge 0} \MM_d(t)(x+x^2+\cdots +
x^{d+1}).
$$
As illustrated by Figure~\ref{fig:map-enum-contract}, the term 1 
records the atomic map, the second term corresponds to maps in which the root-edge
is a loop, and the third term to the remaining cases. In particular,
the sum $(x+x^2+\cdots +
x^{d+1})$ now describes how to distribute the adjacent edges when a
new edge is inserted. 
Given that the
contraction operation is the
dual of the deletion operation, it is perfectly natural to obtain the same equation as before. The reason why we mention this alternative
construction is
that, when we establish below a functional equation for maps weighted
by their Potts (or Tutte) polynomial, we will have to use simultaneously these
two operations, as suggested by the recursive description~\eqref{Potts-induction} of
the Potts polynomial. This will naturally result in equations with
\emm two, catalytic
variables $x$ and $y$.

\subsection{More functional equations}
\label{sec:more-func-eq}
The recursive method is extremely robust. We illustrate this  by a few
examples. Two of them --- maps with prescribed face degrees, and
Eulerian maps with prescribed face degrees --- actually cover
infinitely many families of maps.  Some of these examples also have a
colouring flavour. 

\paragraph{Maps with prescribed face degrees.}
Consider for instance the enumeration of \emm triangulations,, that is, maps
in which all faces have degree 3. The recursive deletion of the root-edge gives
 maps in which all finite faces have degree 3, but the outer face may have any degree: these maps are called \emm
near-triangulations,. We denote by $\mNT$ the set of
near-triangulations. The deletion of the root-edge in a near
triangulation gives either two near-triangulations, or a single one,
\emm the outer degree of which is at least two, (Figure~\ref{fig:triang-enum-del}). In both cases, there
is unique way to reconstruct the map we started from.
 Let $\NT(t;y)\equiv \NT(y)$ be the
\gf\ of near-triangulations, counted by edges and by the outer
degree:
$$
\NT(t;y)= \sum_{M\in\mNT}t^{\ee(M)}y^{\df(M)}= \sum_{d\ge 0} y^d \NT_d(t).
$$

\begin{figure}[htb]
  \begin{center}
    \includegraphics[scale=0.6]{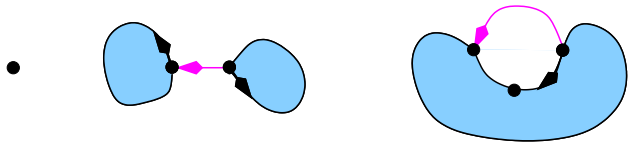}
  \caption{Deletion of the root-edge in a near-triangulation.}
\label{fig:triang-enum-del}
  \end{center}
\end{figure}
The above recursive description translates into
\beq\label{eq:NT}
\NT(y)= 1+ty^2 \NT(y)^2+ t\, \frac{\NT(y)-\NT_0-y\NT_1}{y},
\eeq
where $T_0=1$ counts the atomic map. We have again a divided
difference, this time at $y=0$. Its combinatorial interpretation (``it
is forbidden to add an edge to a map of outer degree 0 or 1'') differs
from the interpretation of the divided difference occurring
in~\eqref{1cat-planaires1}  (``there are multiple ways to add an
edge''). Still, both equations are of the same type and will be solved
by the same method. Note that we have omitted the variable $t$ in the
notation $\NT(y)$, which we will do quite often in this paper, to
avoid heavy notation and enhance the catalytic parameter(s).

Consider now \emm bipartite, planar maps, that is, maps that admit a proper
2-colouring (and then a unique one, if the root-vertex is coloured
white). For planar maps, this is equivalent to saying that all faces have an
even degree. 
Let $\BM(t;y)=\sum_{d\ge 0} B_d(t) y^d$ be the \gf \ of bipartite
maps, counted by edges (variable $t$) and by
half the outer degree (variable $y$). Then the deletion of the root-edge translates
as follows (Figure~\ref{fig:bip}):
\begin{eqnarray}
  \BM(y)&= &1+ ty \BM(y)^2+ t\sum_{d\ge 0} \BM_d (y+y^2+\cdots + y^d)\nonumber
\\
&= &1+ ty \BM(y)^2+ t y \frac{\BM(y)-\BM(1)}{y-1}.\label{eq:bip}
\end{eqnarray}
This is again a quadratic equation with one catalytic
variable, $y$. 

\begin{figure}[htb]
  \begin{center}
     \includegraphics[scale=0.6]{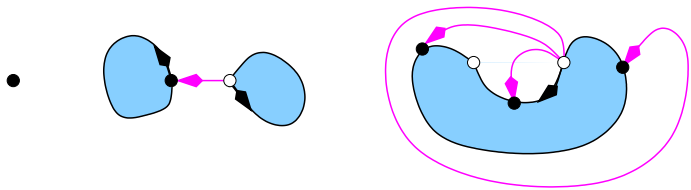}
  \caption{Deletion of the root-edge in a bipartite map.}
\label{fig:bip}
  \end{center}
\end{figure}

\medskip

More generally, it was shown by Bender and
Canfield~\cite{bender-canfield} that the recursive
approach applies to any family of maps for which the
face degrees belong to a given set $D$, provided $D$ differs from a
finite union of arithmetic progressions by a finite set. In all cases,
the equation is quadratic, but may involve more than a single
additional unknown function. For instance, when counting
near-quadrangulations rather than near-triangulations,
Eq.~\eqref{eq:NT} is replaced by
$$
\NQ(y)= 1+ty^2 \NQ(y)^2+ t\, \frac{\NQ(y)-\NQ_0-y\NQ_1-y^2\NQ_2}{y^2},
$$
where $\NQ_i$ counts near-quadrangulations of outer degree $i$.
Bender and Canfield solved
these equations using a theorem of Brown from which the 
quadratic method is derived, proving in particular that the resulting \gf\ is
always algebraic. Their result only involves the edge number, but,
when $D$ is finite, it can be refined by keeping track of the vertex
degree distribution~\cite{mbm-jehanne}.

\paragraph{Eulerian maps with prescribed face degrees.}
A planar map is Eulerian if  all vertices have an
even degree. Equivalently, its faces admit a proper
2-colouring  (and a unique one, if the root-face is coloured
white). Of course, Eulerian maps are the duals of bipartite maps, so
that their \gf\ (by edges, and half-degree of the root-vertex) satisfies~\eqref{eq:bip}. But
 we wish to impose conditions on the face degrees of Eulerian
maps (dually, on the vertex degrees of bipartite maps). 
This includes as a special case the enumeration of (non-necessarily
Eulerian) maps with prescribed face degrees, discussed in the previous
paragraph: indeed, if we require that all
black faces  of
an Eulerian map have degree 2,
each black face can be contracted into a single edge, leaving a standard map
with prescribed (white) face degrees.

Generally speaking, it is difficult to count families of maps with conditions
on the vertex degrees \emm and, on the face degrees (and being
Eulerian is a condition on vertex degrees). 
However,
it was shown in~\cite{mbm-jehanne} that the enumeration of Eulerian
maps such that all black faces have degree in  $D_\bullet$ and all
white  faces have degree in $D_\circ$ can be addressed by the
recursive method when $D_\bullet$ and $D_\circ$ are finite. This is
also true when $D_\bullet=\{m\}$ and $D_\circ=m\N$ (such maps are
called \emm  $m$-constellations,). 


Let us take the example of  Eulerian near-triangulations. All finite
faces have degree 3, while the infinite face, which is white by
convention, has  degree $3d$ for some $d\in \N$. In order to decompose
these maps, we now delete all the edges that bound the black face adjacent
to the root-edge (Figure~\ref{fig:dec-euler-triang}). This leaves 1, 2 or 3 connected components,
which are themselves Eulerian near-triangulations, and which we root
in a canonical way. 
Let $\ET(z;y)\equiv \ET(y)=\sum_{d\ge 0} \ET_d(z) y^d$ be the
\gf\ of Eulerian near-triangulations, counted by black faces
(variable $z$)  and by the outer
degree, divided by 3 (variable $y$). The above decomposition gives:
$$
\ET(y)=1+
 zy\ET(y)^3+ 2z  \ET(y) (\ET(y)-\ET_0) + z(\ET(y)-\ET_0) + z\ \frac{\ET(y)-\ET_0-y\ET_1} y
.
$$
This is a cubic equation with one catalytic variable, which is
routinely solved by the method presented below in Section~\ref{sec:quad}.

\begin{figure}[htb]
  \begin{center}
    \includegraphics[scale=0.6]{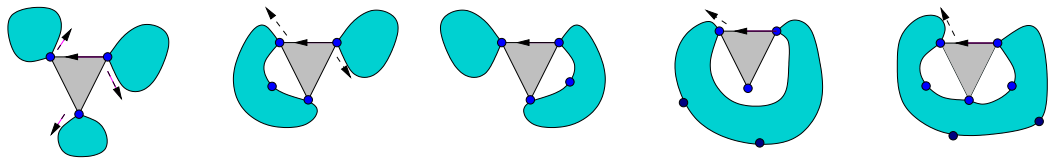}
  \caption{Decomposition of Eulerian near-triangulations.}
\label{fig:dec-euler-triang}
  \end{center}
\end{figure}

The enumeration of Eulerian triangulations is often presented as a
colouring problem~\cite{BDFG02b,DF-Eynard-Guitter}, for the following
reason: a planar triangulation admits a
proper 3-colouring of its vertices if and only if it is (properly) face-bicolourable, that is,
Eulerian\footnote{It is easy to see that the condition is necessary: 
around a face, in clockwise order, one meets either the colours 1, 2,
3 in this order, or 3, 2, 1, and all faces that are adjacent to a
123-face are of the 321-type. The converse is easily seen to hold by
induction on the face number, using
Figure~\ref{fig:dec-euler-triang}.}. Moreover, fixing the colours of the 
endpoints of the root-edge determines completely the colouring. More
generally, let us say that a $q$-colouring is \emm cyclic, if around any
face, one meets either the colours $1,2, \ldots q, 1, 2, \ldots, q$, in
this order, or $q,q-1, \ldots, 1, q, q-1, \ldots ,1 $. Then for $q\ge
3$, a planar
map admits a cyclic $q$-colouring if and only if it is Eulerian and
all its face degrees are multiples of $q$. In this case, it has
exactly $2q$ cyclic colourings. 
The $m$-constellations defined above are of this type (with $m=q$). 

\paragraph{Other families of maps.}
Beyond the two general enumeration problems we have just discussed,
the recursive approach  applies to many other families of planar
maps: loopless
maps~\cite{bender-wormald-loopless,walsh--lehman-III}, maps with
higher
connectivity~\cite{brown-non-separable,brown-tutte-non-separable,gao-5-connected},
dissections of 
a regular
polygon~\cite{brown-triangulations,brown-quadrangulations,tutte-triangulations},
triangulations with large vertex degrees~\cite{bernardi-high-v}, maps
on surfaces of higher
genus~\cite{bender-surface-I,bender-surface-II,gao-surface}...
The resulting equations are often fruitfully combined with \emm composition
equations,  that relate the \gfs\ of two families of maps, for instance
general planar maps and non-separable planar maps
(see. e.g.,~\cite[Eq.~(6.3)]{tutte-census-maps} or~\cite[Eq.~(2.5)]{tutte-triangulations}).

\subsection{Equations with one catalytic variable and algebraicity theorems}
\label{sec:quad}
In this section, we state a general theorem that implies
that the solutions of all  the functional equations we have written so
far are algebraic. We then explain how to solve in practice these equations. The
method extends the \emm quadratic method, that
applies to quadratic equations with a unique additional unknown
series~\cite[Section~2.9]{goulden-jackson}.

Let $\GK$ be a field of characteristic 0, typically $\Q(s_1, \ldots,
s_k)$ for some indeterminates $s_1, \ldots, s_k$.
Let $F(y)\equiv F(t;y)$ be a power series in $\GK(y)[[t]]$, that is, a series in
$t$ with rational coefficients in $y$. Assume that these coefficients 
have no pole at $y=0$. 
The following divided difference (or discrete derivative) is then well-defined:
$$
\Delta F(y) = \frac{F(y)-F(0)}y.
$$
Note that 
$$
\lim _{y\rightarrow 0} \Delta F(y) =F'(0),
$$
where the derivative is taken with respect to $y$. The operator
$\Delta ^{(i)}$ is obtained by applying $i$ times $\Delta$, so that:
$$
\Delta ^{(i)} F(y) = \frac{F(y)-F(0)-yF'(0) -\cdots -
  y^{i-1}/(i-1)!\,F^{(i-1)}(0)}{y^i}. 
$$
Now
$$
\lim _{y\rightarrow 0} \Delta ^{(i)} F(y) =\frac{F^{(i)}(0)}{i!}.
$$
Assume $F(t;y)$ satisfies a functional equation of the form
\beq
\label{main-eq}
F(y)\equiv F(t;y) = F_0(y)+ t\  Q\Big( F(y), \Delta F(y), \Delta
^{(2)}F(y),\ldots ,\Delta ^{(k)}F(y), t; y\Big) ,
\eeq
where $F_0(y)\in \GK(y)$ and $Q(y_0, y_1, \ldots , y_{k}, t; y)$ is  a polynomial in the $k+2$ 
indeterminates $y_0, y_1, \ldots , y_{k}, t$, and a rational function in
the last indeterminate $y$, having  coefficients
 in  $\GK$. 
This equation thus involves, in addition to $F(y)$ itself, $k$
additional unknown series,  namely $F^{(i)}(0)$ for $0 \le i <k$.
%
\begin{theorem} [\cite{mbm-jehanne,bernardi-mbm}]
\label{generic-thm}
Under the above assumptions, the series $F(t;y)$  is algebraic over $\GK(t,y)$.
\end{theorem}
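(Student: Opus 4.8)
The plan is to carry out the \emph{quadratic method} in the generality needed here. First I would clear the divided differences. For $0\le i<k$ put $A_{i+1}:=F^{(i)}(0)/i!$; these are the $k$ unknown series in $\GK[[t]]$ that the statement is really about. Since
\[
\Delta^{(i)}F(y)=\frac1{y^i}\Bigl(F(y)-\sum_{j=0}^{i-1}A_{j+1}y^j\Bigr),
\]
substituting these expressions into \eqref{main-eq} and then clearing all denominators in $y$ turns \eqref{main-eq} into an honest polynomial identity
\[
\mathcal P\bigl(F(y),A_1,\dots,A_k,t,y\bigr)=0,\qquad \mathcal P\in\GK[x_0,x_1,\dots,x_k,t,y],
\]
which I will call $(\star)$; here $\mathcal P$ is completely explicit, and because the dependence of \eqref{main-eq} on $F(y)$ has the form ``$F(y)=\text{rational in }F(y),A_j,t,y$'', the $x_0$-degree of $\mathcal P$ is at least one. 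At $t=0$, equation $(\star)$ forces $F(y)=F_0(y)$, so $\partial_{x_0}\mathcal P$, evaluated along the solution, vanishes modulo $t$ to some order $m$ at $y=0$ (with $m=k$ in the generic case); controlling this $m$ is exactly where the well-foundedness hypothesis enters.

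Next I would differentiate $(\star)$ with respect to $y$, obtaining
\[
\partial_{x_0}\mathcal P\cdot F'(y)+\partial_y\mathcal P=0,
\]
where $\partial_y\mathcal P$ denotes the partial derivative with respect to the \emph{explicit} occurrences of $y$, everything being evaluated at $\bigl(F(y),A_1,\dots,A_k,t,y\bigr)$. The heart of the argument is to produce \emph{branch points}: writing $\mathcal K(y):=\partial_{x_0}\mathcal P\bigl(F(t;y),A_1,\dots,A_k,t,y\bigr)$, a Newton-polygon (equivalently, Weierstrass preparation) argument shows that $\mathcal K(y)=0$ has exactly $m$ roots $Y_1(t),\dots,Y_m(t)$ with positive valuation in $t$ (possibly formal Puiseux series with coefficients in $\overline{\GK}$, which does no harm). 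Since the coefficients of $F(t;y)$, hence of $F'(t;y)$, are regular at $y=0$, the substitution $y\mapsto Y_i(t)$ is well defined and $F(t;Y_i)$, $F'(t;Y_i)$ are well-defined Puiseux series in $t$. Plugging $y=Y_i$ into the differentiated identity, the coefficient of $F'(y)$ vanishes by definition of $Y_i$ while $F'(Y_i)$ stays finite, so $\partial_y\mathcal P\bigl(F(Y_i),A_1,\dots,A_k,t,Y_i\bigr)=0$ as well.

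Now I would assemble the algebraic system. Setting $u_i:=F(Y_i)$ and $\bar A=(A_1,\dots,A_k)$, the three families of relations
\[
\mathcal P(u_i,\bar A,t,Y_i)=0,\qquad \partial_{x_0}\mathcal P(u_i,\bar A,t,Y_i)=0,\qquad \partial_y\mathcal P(u_i,\bar A,t,Y_i)=0\quad(1\le i\le m)
\]
give $3m$ polynomial equations with coefficients in $\GK[t]$ in the $2m+k$ quantities $u_i,Y_i,A_j$. Eliminating the $2m$ quantities $u_i$ and $Y_i$ (resultants, or a Gr\"obner basis computation) leaves non-trivial polynomial relations linking each $A_j$ to $t$ alone, so $A_1,\dots,A_k$ are algebraic over $\GK(t)$. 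Feeding these now-algebraic series back into $(\star)$ exhibits $F(t;y)$ as a root of the non-trivial polynomial $\mathcal P(X,A_1,\dots,A_k,t,y)\in\GK(t,y)(A_1,\dots,A_k)[X]$, and since $\GK(t,y)(A_1,\dots,A_k)$ is a finite extension of $\GK(t,y)$, the series $F(t;y)$ is algebraic over $\GK(t,y)$, as claimed. (For $k=1$ and $\mathcal P$ quadratic in $x_0$ this is Brown's quadratic method; for $\mathcal P$ linear in $x_0$ it is the kernel method; carried out on \eqref{eq:NT} it reproduces the algebraic equation for near-triangulations.)

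The hard part will be the branch-point step, together with the claim that elimination really leaves a \emph{non-zero} relation for the $A_j$: one must know that the $Y_i$ are pairwise distinct and that the $3m$ equations are independent enough for the projection onto the $A_j$-coordinates to be zero-dimensional, equivalently that the relevant resultants do not collapse to $0$. Both points hinge on controlling the order of vanishing of $\mathcal K(y)$ at $t=0$, which is precisely what well-foundedness delivers; everything else is elementary series manipulation and elimination theory.
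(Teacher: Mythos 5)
Your proposal follows exactly the strategy that the paper sketches in Section~3.3 (and attributes to~\cite{mbm-jehanne}): clear denominators to get $P=0$, locate the small roots $Y_i$ of the kernel $\partial P/\partial y_0$, deduce $\partial P/\partial y=0$ at these roots, and eliminate. So the route is the intended one. The problem is that the two steps you explicitly defer at the end are not side conditions to be ``controlled by well-foundedness''; they \emph{are} the proof, and the theorem as stated has no extra well-foundedness hypothesis to appeal to beyond the form~\eqref{main-eq} itself (which only guarantees a unique power-series solution). First, the claim that the kernel has exactly $k$ roots of positive valuation, and that they are pairwise distinct so that they furnish $k$ genuinely different triples of equations, is asserted via ``Newton polygon'' but never derived from the structure of~\eqref{main-eq}; in degenerate (non-generic) cases the count and the distinctness both require a real argument, and your ``$m=k$ in the generic case'' leaves the non-generic cases, which the theorem covers, unaddressed.

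Second, and more seriously, the elimination step is precisely the point the paper flags as non-obvious: the system~\eqref{syst-cat} consists of $k$ copies of the \emph{same} triple of polynomial equations evaluated at different unknowns $(Y_i,F(Y_i))$, so a resultant or Gr\"obner computation may perfectly well collapse to the zero polynomial, and nothing in your write-up rules this out. Saying that the projection onto the $A_j$-coordinates is zero-dimensional is a restatement of what has to be proved, not a consequence of controlling the order of vanishing of $\mathcal K(y)$ at $t=0$. In~\cite{mbm-jehanne} this finiteness of the solution set is the technical heart of the argument (it ultimately rests on the uniqueness of the power-series solution of the original functional equation, via a dedicated dimension argument), and it is why the theorem is a theorem rather than the heuristic recipe of Section~3.3. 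As it stands, your text reproduces the recipe, correctly, but the passage from ``write down the $3k$ equations'' to ``the $A_j$ are algebraic over $\GK(t)$'' is exactly the gap.
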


In practice, one proceeds as follows to obtain an algebraic system of
equations defining the $k$ unknown series
$F^{(i)}(0)$. An example will be detailed further down. Write~\eqref{main-eq} in the form
\beq\label{main-eq-pol}
P(F(y), F(0), \ldots, F^{(k-1)}(0), t;y)=0,
\eeq
for some polynomial $P(y_0, y_1, \ldots , y_{k}, t; y)$, and
consider the following equation in $Y$:
\beq\label{eq-dy0}
\frac{\partial P}{\partial y_0} (F(Y), F(0), \ldots, F^{(k-1)}(0),t;Y)=0.
\eeq
On explicit examples, it is usually easy to see that this
equation admits $k$ solutions $Y_0, \ldots, Y_{k-1}$ in the ring of Puiseux series in $t$
with a non-negative valuation (a Puiseux series is a power series in a
fractional power of $t$, for instance a series in $\sqrt t$). By differentiating~\eqref{main-eq-pol}
with respect to $y$, it then follows that
\beq\label{eq-dy}
\frac{\partial P}{\partial y} (F(Y), F(0), \ldots, F^{(k-1)}(0),
t;Y)=0.
\eeq
Hence the following system of $3k$ algebraic equations holds: for
$i=0, \ldots, k-1$,
\begin{eqnarray}
  P(F(Y_i), F(0), \ldots, F^{(k-1)}(0), t;Y_i)&=&0,
\nonumber\\
\frac{\partial P}{\partial y_0} (F(Y_i), F(0), \ldots, F^{(k-1)}(0),
t;Y_i)&=&0,
\label{syst-cat}\\
\frac{\partial P}{\partial y} (F(Y_i), F(0), \ldots, F^{(k-1)}(0),
t;Y_i)&=&0.\nonumber
\end{eqnarray}
This system involves $3k$ unknown series, namely $Y_i$, $F(Y_i)$, and
$F^{(i)}(0)$ for
$0\le i < k$. The fact that the series $F^{(i)}(0)$ are derivatives of
$F$ plays no particular role. Observe that the above system consists of  $k$ times the \emm same, triple of equations, so that
elimination in this system is not obvious~\cite{mbm-jehanne} (and will often end up being
very heavy). When $k=1$,
however, obtaining a solution takes three lines in Maple. 
Consider for instance the equation~\eqref{eq:NT} we have obtained for
near-triangulations. Eq.~\eqref{eq-dy0} reads in this case
$$
Y=t+2tY^3\NT(Y),
$$
and it is clear that it has a unique solution, which is a \fps\ in $t$
with constant term $0$. Indeed, the coefficient of $t^n$ in $Y$ can be
determined inductively in terms of the coefficients of $\NT$.
Then~\eqref{eq-dy} reads 
$$
\NT(Y)=1+3tY^2\NT(Y)^2-t\NT_1.
$$
These two equations, combined with the original equation~\eqref{eq:NT}
taken at $y=Y$, form a system of three polynomial equations involving $Y, \NT(Y)$
and $\NT_1$, from which $Y$ and $\NT(Y)$ are readily eliminated by
taking resultants. This leaves a polynomial equation for the unknown
series $\NT_1$, which counts near-triangulations of outer degree 1:
$$
\NT_1={t}^{2} -27t^5+ 30\,{t}^{3}  \NT_1+t( 1-96\,{t}^{3} ) {\NT_1}^{2}
+ 64\,{t}^{5}{\NT_1}^{3}.
$$
One can actually go further and obtain simple expressions for the
coefficients of $\NT_1$. The above equation admits rational
parametrisations, for instance
$$
t^3= X(1-2X)(1-4X), \quad t\,\NT_1= \frac{X(1-6X)}{1-4X},
$$
and the Lagrange inversion formula yields the number of
near-triangulations of outer degree 1 having $3n+2$ edges (hence $n+2$
vertices) as
$$
\frac{2\cdot 4^n (3n)!!}{n!!(n+2)!},
$$
where $n!!=n(n-2)(n-4)\cdots(n-2\lfloor \frac{n-1}2\rfloor)$.
The existence of such simple formulas will be discussed further, in
connection with bijective approaches (Section~\ref{sec:bij-uncoloured}).


\paragraph{Algebraicity results.} 
The general algebraicity result for solutions of polynomial equations
with one catalytic variable (Theorem~\ref{generic-thm}), combined with the wide applicability of
the recursive method, implies that many families of planar maps have
an algebraic \gf. In the following
theorem, the term \emm\gf,\ refers to the \gf\ by vertices, faces and
edges (of course, one of these statistics is redundant, by
Euler's formula).  
\begin{theorem}[\cite{bender-canfield,mbm-jehanne}]
\label{thm:alg-maps}
  For any set $D\subset \N$ that differs from a finite union of arithmetic progressions by a finite set, the \gf\ of maps such that all faces have their degree in $D$ is algebraic.
If $D$ is finite, this holds has well for the refined \gf\ that keeps track of the number of $i$-valent faces, for all $i\in D$.

For any finite sets $D_\circ$ and $D_\bullet$ in $\N$, the \gf\ of
face-bicoloured maps  such that all white (resp.~black) faces have
their degree in $D_\circ$ (resp.~$D_\bullet$) is algebraic. This
holds as well for the \gf\ that keeps track of the number of
$i$-valent white and black faces, for all $i \in \N$.  

Finally, the \gf\ of face-bicoloured planar maps such that all black faces have degree $m$, and all white faces have their degree in $m\N$, is algebraic.
\end{theorem}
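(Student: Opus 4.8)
The plan is to derive, for each family of maps described in Theorem~\ref{thm:alg-maps}, a functional equation that fits the template of Theorem~\ref{generic-thm}; algebraicity is then immediate. The work is thus entirely in writing down the equations and checking that they have the required shape, namely a polynomial equation with a single catalytic variable (the degree of the root-face, or in the Eulerian case the degree of the root-vertex on the bipartite side, or some normalisation of it), whose non-polynomial part is a finite combination of divided differences $\Delta^{(i)}F(y)$ with polynomial coefficients in the other series and in $t$, and a rational dependence on $y$.

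First I would treat the case of a general set $D$ of allowed face degrees. Deletion of the root-edge in a near-map of this family (all \emph{finite} faces have degree in $D$, outer face arbitrary) produces either an ordered pair of such maps, glued along the new edge --- contributing a term $t\,y^{?}F(y)^2$ whose $y$-exponent is governed by how the outer degrees add --- or a single map in which a prescribed set of outer corners must be filled so that the merged finite face acquires a degree in $D$; this last contribution is where the hypothesis on $D$ is used. Because $D$ differs from a finite union of arithmetic progressions $a_1+b_1\N,\dots,a_r+b_r\N$ by a finite set, the generating polynomial $\sum_{d\in D} y^{d}$ is a rational function of $y$ with denominator $\prod_i(1-y^{b_i})$ and, more importantly, the reconstruction sum $t\sum_{d}F_d(y^{c_1(d)}+\cdots)$ telescopes into finitely many divided differences of $F$ at the roots of unity $\zeta$ with $\zeta^{b_i}=1$. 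Strictly, Theorem~\ref{generic-thm} as stated uses $\Delta$ at $y=0$ only, so I would either invoke the translation-invariant version of that theorem (the operator $F\mapsto(F(y)-F(\zeta))/(y-\zeta)$ is handled identically, as remarked after the theorem, since the pole location plays no role) or substitute $y\mapsto\zeta y$ to move each pole to the origin, possibly after grouping degrees by residue class modulo $\mathrm{lcm}(b_i)$. The refined version when $D$ is finite is easier: there is no divided difference at all beyond the one at $0$, and one keeps a family of commuting variables $u_i$ recording $i$-valent faces, with $\GK=\Q(u_i:i\in D)$; the shape of the equation is unchanged.

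For the face-bicoloured families I would pass, as the text suggests, to the dual bipartite picture for the general principle but argue directly on the Eulerian side, generalising the Eulerian near-triangulation computation given above. Delete all edges bounding the black face incident to the root-edge; when $D_\bullet$ and $D_\circ$ are finite this black face has bounded degree $m\in D_\bullet$, so one gets between $1$ and $m$ components, and the reconstruction again fills finitely many outer corners so that the newly merged white face lands in $D_\circ$ --- a single divided difference $\Delta^{(i)}$ at $y=0$ with $i=\max D_\circ$ suffices, and the degree of the equation is $\max D_\bullet$. Tracking $i$-valent white and black faces only adds commuting variables to the coefficient field. The last case, $D_\bullet=\{m\}$ and $D_\circ=m\N$ ($m$-constellations), is the one where an infinite $D_\circ$ reappears: the reconstruction sum now runs over all multiples of $m$, but $\sum_{d} y^{md}=1/(1-y^m)$, so it collapses to divided differences at the $m$-th roots of unity, exactly as in the first part, and the equation is polynomial of degree $m$ with one catalytic variable. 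Applying Theorem~\ref{generic-thm} finishes each case.

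The main obstacle is purely bookkeeping rather than conceptual: verifying that the reconstruction sums over an infinite allowed-degree set genuinely reduce to \emph{finitely many} divided differences with the right rational coefficients, and in particular handling the poles that sit at roots of unity other than $1$ so that the hypotheses of Theorem~\ref{generic-thm} (or its obvious shift) are literally met. One must also check that the associated equation is \emph{well-founded} --- that it determines $F(t;y)$ uniquely as a formal power series in $t$ --- which is clear here because deleting an edge (or a batch of edges bounding a face of bounded degree) strictly decreases the edge count, so each coefficient of $t^n$ is determined by lower-order ones. Once those two points are in place, everything follows from the general theorem, and I would present the proof as: (i) set up $F(t;y)$, (ii) write the deletion equation, (iii) rewrite the reconstruction sum via divided differences, (iv) cite Theorem~\ref{generic-thm}.
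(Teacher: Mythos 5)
Your proposal follows essentially the same route as the paper: Theorem~\ref{thm:alg-maps} is presented there as the combination of the recursive decompositions of Section~\ref{sec:more-func-eq} (root-edge deletion for prescribed face degrees, deletion of the black face incident to the root-edge in the Eulerian case) with the one-catalytic-variable algebraicity result, Theorem~\ref{generic-thm}, exactly as in the cited works of Bender--Canfield and Bousquet-M\'elou--Jehanne. The one delicate point you flag --- that for an infinite $D$ the additional unknown series are residue-class sections of $F$ (equivalently, evaluations at roots of unity) rather than derivatives at $y=0$ --- is precisely what the remark following~\eqref{syst-cat} licenses (the fact that the unknowns are derivatives of $F$ plays no particular role), so this is a presentational rather than substantive difference.
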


\paragraph{Where is the quadratic method?}
To finish this section, let us briefly sketch why the above procedure
for solving equations with one catalytic variable generalises the
quadratic method.  The first two equations
of~\eqref{syst-cat} show that $y_0=F(Y_i)$ is a double root of $P(y_0, F(0),
\ldots, F^{k-1}(0),t;Y_i)$. Hence $y=Y_i$ cancels the discriminant of $P(y_0, F(0),
\ldots, F^{k-1}(0),t;y)$, taken with respect to $y_0$. When $P$ has
degree 2 in $y_0$, it is easy to see that the third equation of~\eqref{syst-cat} means that $Y_i$ is
actually a double root of the discriminant~\cite[Section~3.2]{mbm-jehanne}: this is the heart of the
quadratic method,   described
in~\cite[Section~2.9]{goulden-jackson}.  That each series $Y_i$ is
a multiple root of the discriminant actually holds for equations of
higher degree,
but this is far from obvious~\cite[Section~6]{mbm-jehanne}.

\section{Uncoloured planar maps: bijections}
\label{sec:bij-uncoloured}
So far, we have emphasised the fact that many families of planar maps
have an algebraic \gf. It turns out that many of them are also
counted by remarkably simple numbers,  which have a strong flavour of
tree enumeration.  Both observations
raise a natural question: is it possible to explain the algebraicity
and/or the numbers  more combinatorially, via bijections
that would relate maps to trees?

We  present in this section two bijections between planar maps
and some families of trees that 
allow one to determine very elegantly the number of planar maps having 
$n$ edges. The first bijection also explains combinatorially why the
associated \gf\ is algebraic. The second one has other virtues, as it
allows to record  the distances of vertices to the
root-vertex. This property has  proved extremely useful is the study of random
maps of large size and their scaling limit~\cite{chassaing-schaeffer,le-gall-maps,le-gall-paulin-bipartite,marckert-mokkadem,marckert-miermont}. Both bijections
could probably qualify as 
\emm Proofs from The Book,~\cite{TheBook}. Both are robust enough to be
generalised to many other families of maps, as discussed in Section~\ref{sec:more-bijections}.

\subsection{Two proofs from The Book?}
\label{sec:thebook}

Both types of bijections involve families of maps with bounded
vertex degrees (or, dually, bounded face degrees). So let us first
recall why planar maps are equivalent to planar 4-valent maps (or
dually, to quadrangulations).

Take a planar map, and create a vertex in the middle of each
edge, called \emm e-vertex, to distinguish it from the vertices of the
original map. Then, turning inside each face, join by an edge each pair of consecutive
e-vertices (Figure~\ref{fig:radial}). The e-vertices, together with these new edges, form a 4-valent
map. Root this map in a
canonical way. This construction is a bijection between rooted planar
maps with $n$ edges and rooted 4-valent maps with $n$ vertices.

\begin{figure}[htb]
  \begin{center}
    \includegraphics[scale=0.6]{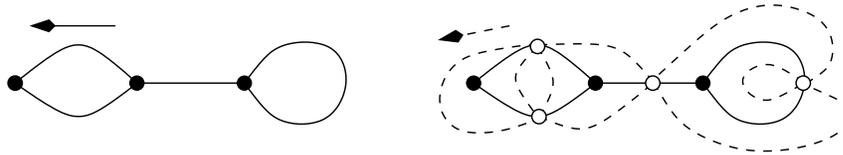}
  \caption{A  planar map with $n$ edges and the corresponding 4-valent
    planar map with $n$ vertices (dashed lines).}
\label{fig:radial}
  \end{center}
\end{figure}

\paragraph{Four-valent maps and blossoming trees.}

The first bijection, due to Schaeffer~\cite{Sch97}, transforms 4-valent maps into \emm blossoming trees,. A
blossoming tree is a (plane) binary tree, rooted at a leaf, such that every
inner node carries, in addition to its two children, a \emm flower,
(Figure~\ref{fig:blossoming}). There are three possible positions for each
flower. If the tree has $n$ inner nodes,
it has  $n$ flowers and $n+2$ leaves. Flowers and leaves are
called \emm half-edges,.

\begin{figure}[htb]
  \begin{center}
    \includegraphics[scale=0.6]{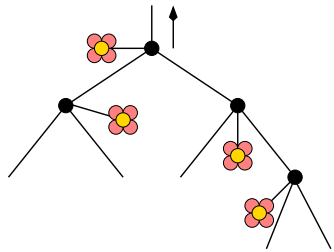}
  \caption{A  blossoming tree with $n=4$ inner nodes.}
\label{fig:blossoming}
  \end{center}
\end{figure}

One obtains  a blossoming  tree by \emm opening, certain edges of a 4-valent
map (Figure~\ref{fig:opening}).
Take a 4-valent map $M$. First, cut the root-edge into two half-edges, that become
leaves: the first of them will be the root of the final tree. Then, start walking  around  the infinite face
in counterclockwise order, beginning with the root edge.   Each time a
non-separating\footnote{An edge is \emph{separating} if its deletion
disconnects the map, \emph{non-separating} otherwise; a map is a tree
if and only if all its edges are separating.}  edge has just been
visited, cut it
into two half-edges: the first one becomes a flower, and the second one,
a leaf. Proceed until all edges are separating edges; this may
require to turn several times around the map.   The final result is
 a blossoming tree, denoted $\Psi(M)$.

\begin{figure}[htb]
  \begin{center}
    \includegraphics[scale=0.5]{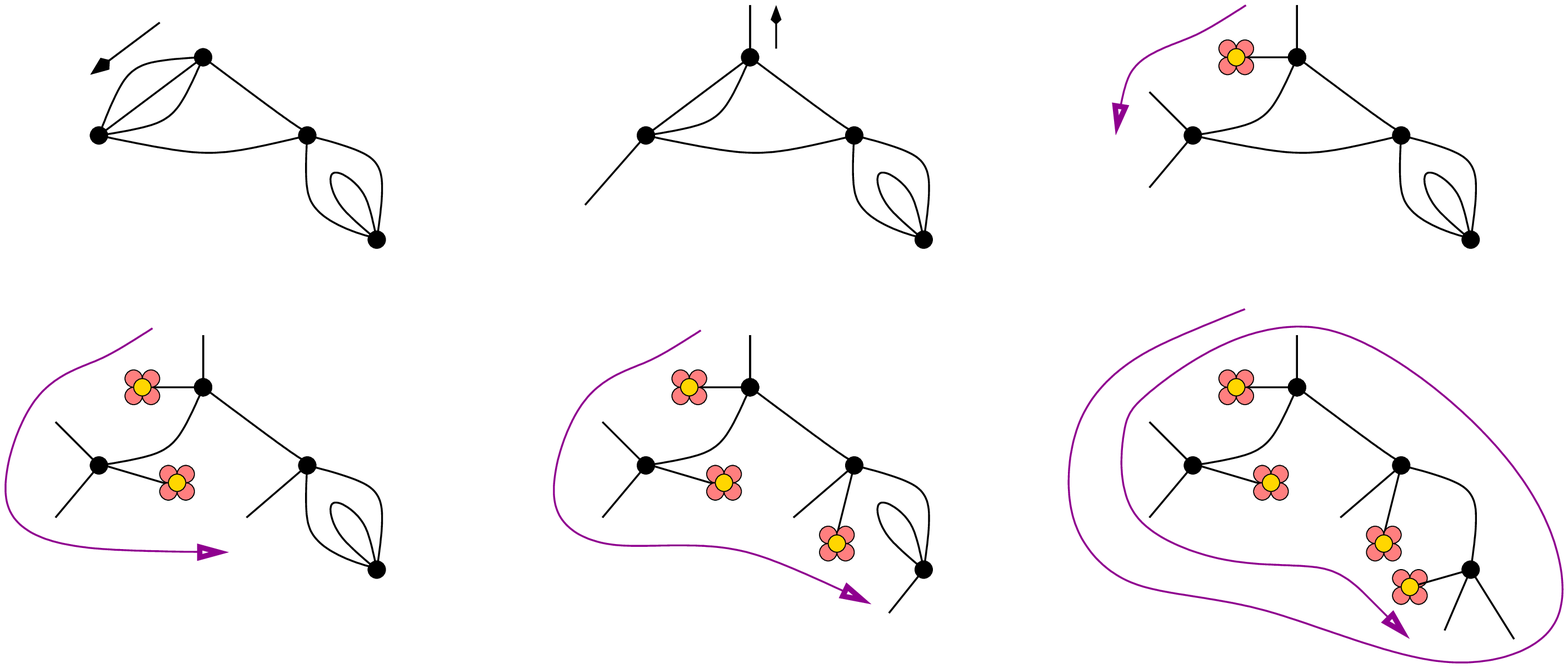}
  \caption{Opening the edges of  a 4-valent
     map gives a blossoming tree.}
\label{fig:opening}
  \end{center}
\end{figure}

Conversely, one can construct a map 
by matching leaves and flowers in
a blossoming tree $T$ as follows (Figure~\ref{fig:unbalanced}, left). Starting from the root, walk around  the infinite
face of $T$ in
counterclockwise order. Each time a flower is immediately followed by a
leaf in the cyclic sequence of half-edges, merge them into an edge in
counterclockwise direction; this creates a new finite face that
encloses no unmatched half-edges. Stop when  all flowers have
been matched. At this point, exactly two leaves remain unmatched
(because there are $n$ flowers and $n+2$ leaves). Observe that the
same two leaves remain unmatched  if one starts walking
around the tree from another position than the root. The
tree $T$ is said to be \emm balanced, if one of the unmatched leaves is the root leaf. 
In this case, 
match it to the other unmatched leaf to form the root-edge of the
map $\Phi(T)$. Again, the complete procedure may require to turn several times around the
tree. We discuss further down what can be done if $T$ is \emm not, balanced.

\begin{prop}[\cite{Sch97}]
  The map $\Psi$ is a bijection between $4$-valent planar maps with
  $n$ vertices and balanced blossoming trees with $n$ inner nodes. Its
  reverse bijection is~$\Phi$.
\end{prop}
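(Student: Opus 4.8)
The plan is to establish that $\Psi$ and $\Phi$ are mutually inverse maps between the two finite sets in question, which by a counting argument (or directly) gives that each is a bijection. I would organise the argument in four steps.

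First, I would check that $\Psi$ is well-defined and lands in the set of balanced blossoming trees. The key point is that the opening procedure — repeatedly cutting a non-separating edge, the first time it is encountered while turning counterclockwise around the outer face, into a flower followed by a leaf — strictly decreases the number of non-separating edges, so it terminates, and when it does every edge is separating, i.e.\ the result is a tree. One must check that the underlying plane tree is binary with each inner node carrying exactly two children and one flower: this follows because each original $4$-valent vertex is incident to four half-edges, and cutting edges redistributes these into (two children, one flower, one parent-edge) at each inner node, while the $n+2$ leaves and $n$ flowers account for the half-edges created by the $n$ cuts plus the two halves of the root-edge. Finally one checks the tree is balanced: the root leaf is one of the two half-edges of the original root-edge, and I would argue that this root leaf is precisely one of the two leaves left unmatched by the closing procedure $\Phi$ applied to $\Psi(M)$ — this is really the crux and is best proved together with Step 3.

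Second, symmetrically, I would check $\Phi$ is well-defined on balanced blossoming trees: the matching procedure (merge a flower immediately followed by a leaf, in counterclockwise order, closing off a face containing no unmatched half-edge) always terminates with exactly two leaves unmatched, because each merge reduces the flower count by one and there are $n$ flowers and $n+2$ leaves; one then invokes the stated invariance of the pair of unmatched leaves under the choice of starting point, together with the ``balanced'' hypothesis, to see that the root leaf is available to be joined to the other unmatched leaf, producing a well-defined rooted map, which one checks is $4$-valent (every inner node, having two children plus one flower plus its parent half-edge, becomes $4$-valent after all matchings; the two terminal leaves that get joined contribute the root-edge, and the only remaining leaves have been absorbed into edges).

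Third — the heart of the proof — I would show $\Phi \circ \Psi = \mathrm{id}$ and $\Psi \circ \Phi = \mathrm{id}$. The natural way is to track, step by step, that the cut performed by $\Psi$ at a given non-separating edge is exactly undone by a merge of $\Phi$: when $\Psi$ cuts an edge $e$ into a flower $f$ then a leaf $\ell$ (in counterclockwise order around the outer face), the face that $e$ bordered on its inner side is ``unfolded'', and in $\Phi$ the pair $(f,\ell)$ is a flower immediately followed by a leaf enclosing no unmatched half-edge, so it gets merged back. Running this correspondence in reverse order of the cuts shows $\Phi$ reconstructs $M$; a symmetric argument handles $\Psi\circ\Phi$. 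This is also where the ``balanced'' condition gets its precise meaning: $\Psi(M)$ is balanced exactly because the root-edge of $M$ is the last edge closed (equivalently, the two surviving leaves of $\Psi(M)$ under $\Phi$ are the two halves of the original root-edge), which is what makes $\Phi$ restore the correct rooting.

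I expect the main obstacle to be Step 3, specifically the bookkeeping that the counterclockwise ``sweep'' in $\Psi$ and the counterclockwise matching in $\Phi$ interleave correctly — i.e.\ that ``a flower immediately followed by a leaf enclosing no unmatched half-edge'' in $\Phi$ corresponds bijectively, in the right order, to ``the first visit to a non-separating edge'' in $\Psi$. Making this rigorous really requires a careful invariant maintained throughout the sweep (for instance: at every intermediate stage the partially-opened object is a map-with-dangling-half-edges whose closure is $M$), and keeping track of why turning several times around the object causes no ambiguity. The binary/valence count and the termination arguments are routine by comparison; the subtlety is entirely in the order-of-operations matching and in pinning down why the root leaf is one of the two distinguished unmatched leaves.
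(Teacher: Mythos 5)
A preliminary remark on the comparison itself: the paper does not prove this proposition --- it is stated with a citation to Schaeffer \cite{Sch97}, the survey only describing the constructions $\Psi$ and $\Phi$ informally before exploiting the bijection in Corollaries~\ref{coro:4V-alg} and~\ref{coro:4V-count}. So there is no in-paper proof to measure your plan against; your outline (well-definedness of both maps, then a cut-by-cut versus merge-by-merge correspondence showing $\Phi\circ\Psi=\mathrm{id}$ and $\Psi\circ\Phi=\mathrm{id}$, with the balanced condition accounting for the rooting) is indeed the standard strategy of \cite{Sch97}.

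As a proof, however, your proposal has a concrete gap in Step 1, and Step 3 remains a plan. In Step 1 you assert that the local structure of $\Psi(M)$ --- at every inner node exactly two children, one flower and one parent half-edge --- ``follows'' from $4$-valence plus the global count of $n$ flowers and $n+2$ leaves. It does not: cutting preserves vertex degrees, so each inner node has four incidences, but the global counts ($n$ flowers, $n+2$ leaves, $n-1$ tree edges joining inner nodes) are equally consistent with, say, one vertex carrying two flowers and another carrying none. That each vertex acquires exactly one flower, i.e.\ that $\Psi(M)$ really is a blossoming tree in the sense defined here, is one of the genuinely nontrivial points of Schaeffer's argument and requires either an analysis of the dynamics of the counterclockwise tour or an a posteriori characterisation of the image once the inverse correspondence is established. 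Likewise, in Step 3 you correctly name the needed invariant (at every intermediate stage one has a map with dangling half-edges whose closure is $M$, the pair of unmatched leaves being the two halves of the root-edge), but verifying that this invariant is preserved by each cut of $\Psi$ and each merge of $\Phi$, in the right order, is precisely where the substance of the proof lies; as written the proposal is a sound roadmap rather than a complete argument.
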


With this bijection, it is easy to justify combinatorially the algebraicity of the \gf\ of 4-valent maps.
\begin{cor}\label{coro:4V-alg}
  The \gf\ of $4$-valent planar maps, counted by vertices, is
$$
M(t)=T(t)-tT(t)^3$$
where $T(t)$, the \gf\ of blossoming trees (counted by inner nodes),
satisfies
$$
T(t)=1+3tT(t)^2.
$$
\end{cor}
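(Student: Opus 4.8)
The plan is to read off both equations from the bijection $\Psi$, together with an elementary recursive decomposition of blossoming trees. Since $\Psi$ is a bijection between $4$-valent planar maps with $n$ vertices and balanced blossoming trees with $n$ inner nodes, the series $M(t)$ is precisely the generating function of \emph{balanced} blossoming trees, counted by inner nodes. The equation $T(t)=1+3tT(t)^2$ for the generating function of \emph{all} blossoming trees is immediate from their recursive structure, decomposed at the root leaf: the neighbour of the root leaf is either a leaf, so that the tree reduces to a single edge with no inner node (the term $1$), or an inner node, which bears a flower in one of three positions and is the root of two further blossoming trees (the term $3tT^2$, with $t$ marking this inner node). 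So the whole content is the identity $M(t)=T(t)-tT(t)^3$, equivalently $T(t)=M(t)+tT(t)^3$: the generating function of \emph{unbalanced} blossoming trees must equal $tT(t)^3$.

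I would prove this enumeratively, using the observation recorded just before the statement: the flower--leaf matching procedure leaves exactly two ``unmatched'' leaves in any blossoming tree with $n$ inner nodes (which has $n+2$ leaves in all), and this pair of leaves does not depend on which leaf is taken as the root; hence a blossoming tree is balanced precisely when its root leaf is one of these two. Counting pairs (blossoming tree with $n$ inner nodes, distinguished leaf of it) in two ways then gives the result. On one hand there are $(n+2)T_n$ such pairs, of which exactly $2T_n$ have their distinguished leaf unmatched, where $T_n=[t^n]T$. On the other hand, re-rooting each tree at its distinguished leaf (and declaring the old root to be the new distinguished leaf) is an involution on the set of pairs; by root-independence it sends the pairs with unmatched distinguished leaf exactly onto the pairs whose (re-rooted) tree is balanced, and there are $(n+2)M_n$ of the latter, where $M_n=[t^n]M$. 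Thus $(n+2)M_n=2T_n$, i.e.\ $tM'(t)+2M(t)=2T(t)$; combined with $T=1+3tT^2$ (which gives $3tT^3=T(T-1)$) a short computation turns this linear relation into $M=(4T-T^2)/3=T-T(T-1)/3=T-tT^3$. An appealing bijective alternative is to show directly that an unbalanced blossoming tree decomposes, via the flower that the matching pairs with the root leaf, into one distinguished inner node and three blossoming trees, and conversely --- which would yield $T=M+tT^3$ on the nose.

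The main obstacle is not the skeleton above but the combinatorial substance it rests on: that the cyclic flower--leaf matching is well defined and genuinely insensitive to the choice of root, so that ``two unmatched leaves'' is an invariant of the underlying tree; and, should one pursue the bijective route, that cutting a tree at the flower matched to the root really does produce three bona fide blossoming trees with a well-defined, reversible reconstruction. Granting that, the identity checks out on small cases: $T=1+3t+18t^2+\cdots$ yields $M=1+2t+9t^2+\cdots$, the $1,2,9$ being the numbers of rooted planar maps with $0,1,2$ edges (Figure~\ref{fig:two-edges}) via the radial correspondence between maps and $4$-valent maps.
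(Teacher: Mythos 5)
Your proposal is correct, but your primary route differs from the paper's proof of this corollary. The paper argues entirely combinatorially: writing $M=T-U$ with $U$ the series of unbalanced trees, it considers the flower matched to the root leaf, deletes the inner node carrying it, and obtains a direct bijection between unbalanced trees and triples of blossoming trees (with the marked inner node contributing $t$), whence $U=tT^3$ on the nose --- this is exactly the ``appealing bijective alternative'' you mention in passing. What you do instead is import the re-rooting/double-counting lemma that the paper reserves for the enumeration result (Corollary~\ref{coro:4V-count}), namely $2T_n=(n+2)M_n$, and then convert the resulting linear relation $tM'(t)+2M(t)=2T(t)$ into the closed form by using $T=1+3tT^2$ (and its derivative) to check that $T-tT^3=(4T-T^2)/3$ satisfies it; since the coefficient relation determines $M$ uniquely, this is a valid derivation, and your verification on $n=0,1,2$ is consistent. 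The trade-off: your route proves the counting corollary along the way and avoids analysing the structure around the matched flower, but it needs the candidate expression in hand (or a small ODE solve) and a differentiation of the $T$-equation, so it is less in the spirit of the paper's stated goal of explaining the algebraicity \emph{combinatorially}; the paper's unbalanced-tree decomposition yields $M=T-tT^3$ with no computation at all. Both arguments rest on the same two facts you correctly flag as the real substance --- that the matching leaves exactly two unmatched leaves and is independent of the choice of root --- which the paper states but does not prove in detail either.
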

\begin{proof}
  By decomposing blossoming trees into two subtrees and a flower, it
  should be clear that their \gf\  $T(t)$ satisfies the above equation. Via the bijection $\Psi$, counting maps boils down to counting balanced
  blossoming trees. Their \gf\ is $T(t)-U(t)$, where $U(t)$ counts
  \emm unbalanced, blossoming trees. Consider such a tree, and look at the flower that
  matches the root leaf (Figure~\ref{fig:unbalanced}). This flower is attached to an inner
  node. Delete this node: this leaves, beyond the flower, a 3-tuple of blossoming
  trees. This shows that $U(t)=tT(t)^3$.
\end{proof}

\begin{figure}[htb]
  \begin{center}
    \includegraphics[scale=0.5]{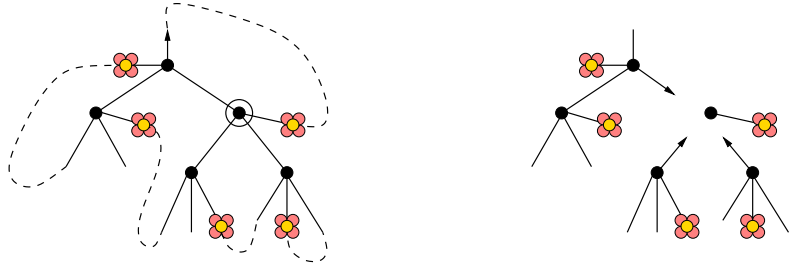}
  \caption{An unbalanced blossoming tree gives rise to three blossoming trees.}
\label{fig:unbalanced}
  \end{center}
\end{figure}

This bijection was originally designed~\cite{Sch97} to explain combinatorially the
simple formulas that occur in the enumeration of maps in which all vertices
have an even degree --- like $4$-valent maps.
\begin{cor}\label{coro:4V-count}
  The number of $4$-valent rooted planar maps with $n$ vertices is
$$
 \frac{2\cdot3^n}{(n+1)(n+2) }{{2n}\choose n}.
$$
\end{cor}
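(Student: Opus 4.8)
The plan is to read the count directly off Corollary~\ref{coro:4V-alg} together with the Lagrange inversion formula. By that corollary, $M(t)=T(t)-tT(t)^3$, where $T=T(t)$ is the unique power series with $T(0)=1$ satisfying $T=1+3tT^2$. So it suffices to compute $[t^n]T$ and $[t^{n-1}]T^3$ and to subtract.

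To bring $T$ into a form amenable to Lagrange inversion, I set $u=T-1$; then $u=3t(1+u)^2$, that is, $u=t\,\psi(u)$ with $\psi(u)=3(1+u)^2$ and $u(0)=0$. Lagrange inversion gives, for any formal power series $H$, the identity $[t^m]H(u)=\frac1m[u^{m-1}]\bigl(H'(u)\,\psi(u)^m\bigr)$ for $m\ge 1$. Taking $H(u)=1+u=T$ yields $[t^n]T=\frac{3^n}{n}[u^{n-1}](1+u)^{2n}=\frac{3^n}{n}\binom{2n}{n-1}$ (equivalently $3^nC_n$, which also follows combinatorially, since a blossoming tree is a plane binary tree --- counted by the Catalan number $C_n$ --- with an independent choice among three flower positions at each of the $n$ inner nodes). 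Taking $H(u)=(1+u)^3=T^3$ yields $[t^{n-1}]T^3=\frac{3^n}{n-1}[u^{n-2}](1+u)^{2n}=\frac{3^n}{n-1}\binom{2n}{n-2}$.

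It then remains to verify the elementary binomial identity $\frac1n\binom{2n}{n-1}-\frac1{n-1}\binom{2n}{n-2}=\frac{2}{(n+1)(n+2)}\binom{2n}{n}$. Rewriting both terms with factorials, the left-hand side equals $\frac{(2n)!}{n!(n+1)!}-\frac{(2n)!}{(n-1)!(n+2)!}=\frac{(2n)!}{(n-1)!(n+1)!}\bigl(\tfrac1n-\tfrac1{n+2}\bigr)=\frac{2\,(2n)!}{n!(n+2)!}$, and $\frac{2\,(2n)!}{n!(n+2)!}=\frac{2}{(n+1)(n+2)}\binom{2n}{n}$ since $(n+2)!=(n+1)(n+2)\,n!$. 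Multiplying through by $3^n$ gives $[t^n]M(t)=\frac{2\cdot3^n}{(n+1)(n+2)}\binom{2n}{n}$, as claimed.

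I do not anticipate a genuine obstacle. The only points requiring a little care are the degenerate small cases $n\le 1$, where one simply checks $M(0)=1$ and $[t^1]M=2$ directly from $M=T-tT^3$ with $T=1+3t+\cdots$, and the bookkeeping in the factorial simplification. If one wished to avoid Lagrange inversion for $[t^{n-1}]T^3$ as well, one could instead extract coefficients from the rational parametrisation $t=u/\bigl(3(1+u)^2\bigr)$, but Lagrange inversion is the most direct route.
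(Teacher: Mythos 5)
Your proof is correct, but it follows a different route from the paper's. The paper never extracts coefficients from the algebraic system of Corollary~\ref{coro:4V-alg}: instead it argues directly on blossoming trees, noting that marking one of the two unmatched leaves of an arbitrary blossoming tree is equivalent, after re-rooting, to marking one of the $n+2$ leaves of a \emph{balanced} blossoming tree, whence $2t_n=(n+2)b_n$ with $t_n=3^nC_n$, and the formula for $b_n$ (hence for $4$-valent maps, via $\Psi$) drops out immediately. Your argument instead takes the identity $M(t)=T(t)-tT(t)^3$, $T=1+3tT^2$ as the starting point and computes $[t^n]T$ and $[t^{n-1}]T^3$ by Lagrange inversion applied to $u=T-1=3t(1+u)^2$, finishing with a correct binomial simplification; the computations and the small cases $n\le 1$ all check out. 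What the paper's double-counting buys is an algebra-free, fully combinatorial explanation of the closed formula, which is precisely the point of that section; what your computation buys is robustness and routine applicability (it works mechanically from any such algebraic parametrisation, and in fact re-derives $[t^n]T=3^nC_n$ without the Catalan observation), at the cost of not explaining the simplicity of the answer combinatorially and of relying on Corollary~\ref{coro:4V-alg}, whose proof already uses the decomposition of unbalanced trees.
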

\begin{proof}
  We will prove that the above formula counts balanced blossoming trees of
  size $n$. Clearly, the total number of   blossoming trees of this size is
$$
t_n = \frac{3^n} {n+1 }{{2n}\choose n}
$$
(because binary trees are counted by the Catalan numbers ${{2n}\choose
  n}/(n+1)$). Marking a blossoming tree at one of its two unmatched
leaves is equivalent, up to a re-rooting of the tree, to marking a
balanced blossoming tree at one of its $n+2$ leaves. This shows that
$2t_n=(n+2)b_n$, where $b_n$ counts balanced blossoming trees, and the
result follows. 
\end{proof}

\paragraph{A more general construction.} A variant $\overline \Phi$ of
the above bijection sends pairs 
$(T,\epsilon)$ formed of a
(non-necessarily balanced)
blossoming tree $T$ and of a sign $\epsilon\in\{+,-\}$  onto rooted 4-valent maps with a distinguished
face. This construction works as follows. In the tree $T$, one matches flowers and leaves as
described above. The two unmatched leaves are then used to form the root
edge, the orientation of which is chosen according to the sign~$\epsilon$. This
gives a 4-valent rooted map. One then marks the face of this map located to the
right of the half-edge where $T$ is rooted. For example, the two maps
associated with the (unbalanced) tree of Figure~\ref{fig:unbalanced} are shown in
Figure~\ref{fig:phi-barre}.

\begin{figure}[htb]
  \begin{center}
    \includegraphics[scale=0.5]{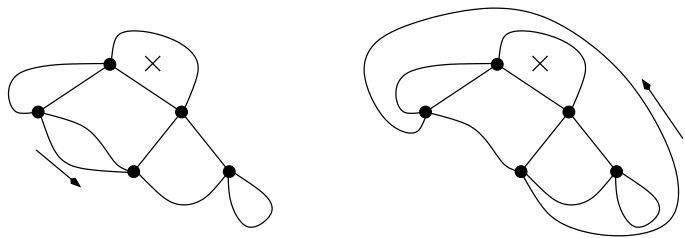}
  \caption{The two maps associated to the tree  of
    Figure~\ref{fig:unbalanced} via the map $\overline \Phi$.}
\label{fig:phi-barre}
  \end{center}
\end{figure}

This construction is
bijective.
Since a 4-valent map with $n$
vertices has $n+2$ faces, it proves that the number
$m_n$ of such maps satisfies $(n+2)m_n= 2\cdot{3^n} {{2n}\choose n}/(n+1)$.

The bijection $\Phi$ described earlier can actually be seen as a specialisation of $\overline\Phi$: If $T$ is  balanced, and one chooses to orient the root
edge of the map in such
a way it starts with the root half-edge of the tree, the map $M$ one obtains
satisfies $\Psi(M)=T$.  The distinguished face is in this case the
root-face, and is thus canonical.

\paragraph{Quadrangulations and labelled trees.}
The second bijection starts from the duals of 4-valent maps, that is,
from quadrangulations. It transforms them into  \emm well labelled
trees,. A \emm labelled tree, is a rooted plane
tree with labelled vertices, such that:

-- the labels belong to $\{1,2, 3, \ldots\}$,

-- the smallest label that occurs is 1, 

-- the labels
of two adjacent vertices differ by $0, \pm 1$.

\noindent
The tree is \emm well labelled, if, in addition, the root vertex has
label 1.

This bijection was first found by Cori \& Vauquelin in 1981~\cite{cori-vauquelin}, but the
simple description we give here was only discovered later by Schaeffer~\cite{chassaing-schaeffer,schaeffer-these}.
As above, there are two versions of this bijection: the most
general one sends 
 rooted quadrangulations with $n$ faces and a distinguished (or:
 \emm pointed,) vertex
 $v_0$ onto
pairs $(T,\epsilon)$ formed of a  labelled tree with $n$ edges $T$ and of
a sign $\epsilon\in\{+,-\}$. Equivalently, it sends  rooted
quadrangulations with a pointed vertex $v_0$ \emm such that the root
edge is oriented away from, $v_0$ (in a sense that will be explained
below) to labelled trees. The
other bijection is a restriction, which sends rooted quadrangulations
(pointed canonically at their root-vertex) onto
well labelled trees. 

So let us describe directly the more general bijection $\overline \Lambda$. Take a
 rooted quadrangulation $Q$ with a pointed vertex $v_0$, such
 that the root-edge is oriented away from $v_0$. By this, we mean that
 the  starting point of the root-edge is closer to $v_0$ than
 the endpoint, in terms of the graph distance\footnote{The fact that
   $Q$ is a quadrangulation, and hence a bipartite map, prevents two
   neighbour vertices to be at the same distance from $v_0$.}. Label all vertices by their
distance to $v_0$. The labels of two neighbours differ by $\pm 1$. If the starting point of the
root-edge has label $\ell$, then the endpoint has label $\ell+1$. The labelling results in two types of
faces: when walking inside a face with the edges on the left, one
sees either a cyclic sequence of labels of the form $\ell, \ell+1,
\ell, \ell+1$, or a sequence of the form $\ell, \ell+1, \ell+2,
\ell+1$.  In the former case, create an edge in the face joining the
two corners labelled $\ell+1$. In the latter one, create an edge from the ``first''
corner labelled $\ell+1$ (in the order described above) to the corner
labelled $\ell+2$. See Figure~\ref{fig:bijection-labelled} for an example. The set of edges created
in this way forms a tree, which we root at the edge created in the
outer face of $Q$, oriented 
away from the endpoint of the root-edge of $Q$ (Figure~\ref{fig:labelled-root}). This tree, $\overline \Lambda(Q)$, contains all
vertices of $Q$, except the marked one.

\begin{figure}[htb]
  \begin{center}
    \includegraphics[scale=0.5]{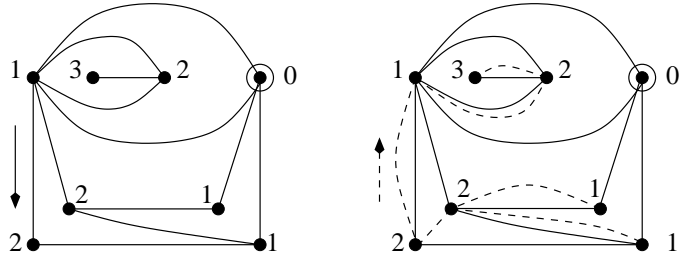}
  \caption{From a rooted quadrangulation with a pointed vertex to a labelled
    tree  (in dashed lines).}
\label{fig:bijection-labelled}
  \end{center}
\end{figure}

\begin{figure}[htb]
  \begin{center}
    \scalebox{0.45}{\input{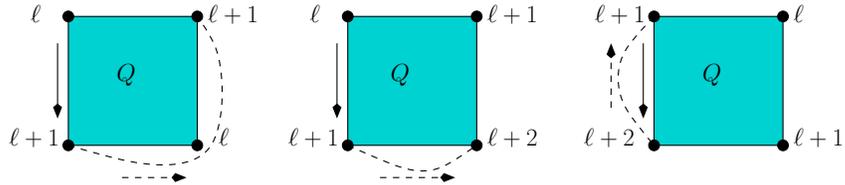}}
   \caption{How to root the tree $\overline \Lambda(Q)$. The only
     vertices that are shown are those of the root-face of $Q$.}
\label{fig:labelled-root}
  \end{center}
\end{figure}

The reverse bijection $\overline V$ works as follows (see Figure~\ref{fig:labelled-inverse} for an example). Start from a labelled tree
$T$.  Create a new vertex $v_0$, away from the tree. Then, visit the  corners of the tree in counterclockwise order. From
each corner labelled $\ell$, send an edge to the next corner labelled
$\ell-1$ (or to $v_0$ if $\ell=1$). This set of edges forms a
quadrangulation $Q$. Each face of $Q$ contains an edge of $T$. Choose
the root-edge of $Q$ in the face containing the root-edge of $T$, according to the
rules of Figure~\ref{fig:labelled-root}.

\begin{figure}[htb]
  \begin{center}
  \scalebox{0.45}{\input{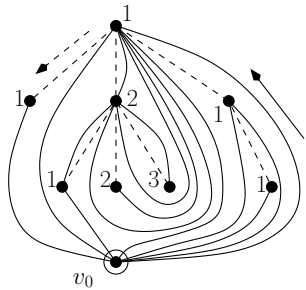}}
  \caption{From   a labelled
    tree  (dashed lines) to a rooted quadrangulation with a pointed vertex.}
\label{fig:labelled-inverse}
  \end{center}
\end{figure}

\begin{prop}
  The map $\overline \Lambda$ sends  bijectively pointed rooted
  quadrangulations with $n$ faces such that the root-edge is oriented away from the
  pointed vertex to labelled trees with $n$ edges. The reverse bijection is  $\overline V$.

When specialised to rooted quadrangulations pointed canonically at their
root-vertex, $\overline \Lambda$ induces a bijection $\Lambda$ between rooted
quadrangulations and well labelled trees.
\end{prop}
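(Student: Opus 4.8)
The plan is to check that $\overline\Lambda$ and $\overline V$ are well-defined maps between the announced sets of objects, and then that they are mutually inverse; bijectivity and the second statement follow. The only global ingredient is Euler's relation: a quadrangulation with $n$ faces has $4n/2=2n$ edges, hence $n+2$ vertices, which matches the $n+1$ vertices of a labelled tree with $n$ edges, plus the pointed vertex $v_0$.

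First I would show $\overline\Lambda$ is well-defined. Since $Q$ is bipartite, the distance labelling changes by exactly $\pm1$ along each edge, so around any face one reads, with the edges on the left, either $\ell,\ell+1,\ell,\ell+1$ or $\ell,\ell+1,\ell+2,\ell+1$; in each case the construction adds exactly one edge, hence $n$ edges in all, incident only to the $n+1$ vertices $\ne v_0$. It then suffices to prove these added edges are acyclic: a cycle $C$ among them bounds a disk avoiding $v_0$, and inspecting a vertex of minimal label on $C$ together with the faces of $Q$ it meets inside the disk contradicts the local shape of the labelling just described. Thus the added edges form a spanning tree of $V(Q)\setminus\{v_0\}$; rooting it as in Figure~\ref{fig:labelled-root} yields a labelled tree whose root vertex carries the label of the endpoint of the root-edge of $Q$ --- one step further from $v_0$ than its origin, since the root-edge is oriented away from $v_0$. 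Symmetrically, for $\overline V$: starting from a labelled tree $T$ with $n$ edges and $2n$ corners, every corner has label $\ge1$ and emits exactly one arc (to the next corner of label one less, or to $v_0$ from a corner of label $1$), for $2n$ arcs in all. One checks these arcs can be drawn without crossings, that the label strictly decreases along each arc so that (by induction on $\ell$) a vertex originally labelled $\ell$ ends at distance $\ell$ from $v_0$, and that the resulting map is a quadrangulation --- locally, around each edge of $T$ the two ``first'' corners on its two sides emit two arcs closing up a single quadrilateral, and every face of $Q$ arises exactly once this way. Finally the root-edge of $Q$ is selected, via the rules of Figure~\ref{fig:labelled-root}, so that it is oriented away from $v_0$.

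Next I would prove $\overline V\circ\overline\Lambda=\mathrm{id}$ and $\overline\Lambda\circ\overline V=\mathrm{id}$. Both reduce to one local statement, read in the two directions: inside a face of $Q$ of either of the two types, the edge of $T$ created there has precisely the two endpoints that, viewed in $T$, are joined by ``next corner of label one less, counterclockwise'', and conversely the arcs emitted in $\overline V$ from the two first corners flanking an edge of $T$ reconstruct exactly that quadrilateral. Matching the root conventions (Figure~\ref{fig:labelled-root}) identifies the rooted pointed structures on both sides. For the restriction to quadrangulations pointed at their root-vertex: then $v_0$ is the origin of the root-edge, so the root-edge is automatically oriented away from $v_0$ and $Q$ lies in the domain of $\overline\Lambda$; moreover the root vertex of $\overline\Lambda(Q)$ is the endpoint of the root-edge of $Q$, at distance $1$ from $v_0$, so the tree is well labelled. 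Conversely, a well-labelled tree has root vertex labelled $1$, which forces (again via the Figure~\ref{fig:labelled-root} rules) the root-edge of $\overline V(T)$ to start at $v_0$; hence $\Lambda$ and $\overline\Lambda$-restricted coincide and define the claimed bijection.

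The main obstacle is the local, face-by-face analysis underlying the mutual-inversion step, together with the acyclicity argument for $\overline\Lambda(Q)$: the Euler count and the classification of the two face types are immediate, but checking that ``next corner of the prescribed label around $Q$'' and ``around $T=\overline\Lambda(Q)$'' coincide in every local configuration --- and handling the attendant care with the rooting --- is where the real work lies.
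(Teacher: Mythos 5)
The paper itself does not prove this proposition: it is a survey statement, quoted from Cori--Vauquelin and Schaeffer~\cite{cori-vauquelin,chassaing-schaeffer,schaeffer-these}, so there is no in-paper argument to match yours against. Your plan follows exactly the standard route of those references (well-definedness of both maps via the distance labelling and the Euler count $2n$ edges, $n+2$ vertices, then a face-by-face local identification to get mutual inversion, then the specialisation to root-pointed quadrangulations), and nothing in it is wrong in spirit; the counting steps, the classification of the two face types, and the distance argument for $\overline V$ (label decreases by $1$ along each arc, so labels equal distances) are sound as stated.

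However, as a proof it has genuine gaps precisely at the points you yourself flag as ``the real work''. First, acyclicity of the created edge set is only gestured at: saying that a vertex of minimal label on a hypothetical cycle $C$, ``together with the faces of $Q$ it meets inside the disk'', yields a contradiction is not yet an argument --- one must actually show that every $Q$-edge from such a vertex towards a label-$(m-1)$ vertex is forced to the $v_0$-side of $C$, or use the usual corner/successor formulation, and this is where the geometry of how the new edges sit inside the faces of $Q$ has to be exploited in detail. Second, for $\overline V$ you assert, without proof, that the $2n$ arcs can be drawn without crossings and that every face of the resulting map is a quadrilateral arising ``exactly once'' from an edge of $T$; the non-crossing property is a genuine lemma (a parenthesis-matching argument on the corner sequence), not an observation. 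Third, the mutual-inversion step is reduced to ``one local statement'' that is never formulated precisely, and the root conventions of Figure~\ref{fig:labelled-root} (in particular that the tree root is, in every case of the figure, a corner of the endpoint of the root-edge of $Q$, which is what your well-labelledness claim in the specialisation rests on) are invoked rather than checked. A complete write-up would essentially have to reproduce the proofs of~\cite{cori-vauquelin,chassaing-schaeffer}; in its present form your text is a faithful outline of that proof, not a self-contained one.
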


Let us now discuss the consequences of this bijection, in terms of
algebraicity and in terms of closed formulas. It is possible to use $\Lambda$ to prove that the \gf\ of rooted quadrangulations is
algebraic~\cite{cori-vauquelin}, and satisfies the system of Corollary~\ref{coro:4V-alg}, 
but this is not as simple as the proof of Corollary~\ref{coro:4V-alg} given above. What \emm
is, simple is to use $\overline \Lambda$ to count quadrangulations,
and hence recover Corollary~\ref{coro:4V-count}. 

This alternative proof works as
follows. First, observe that there are $3^n C_n$ labelled trees with
$n$ edges, where $C_n={{2n}\choose n}/(n+1)$ counts rooted plane trees
with $n$ edges. Indeed, $3^n C_n$ is clearly the number of trees
labelled 0 at the root vertex, such that the labels are in $\Z$ and
differ by at most 1 along edges. If $\ell_0$ denotes the smallest
label of such a tree, adding $1-\ell_0$ to all labels gives a labelled
tree, and this transformation is reversible. Now, the above
proposition implies that  $3^n C_n= (n+2) q_n/2$, where $q_n$ is the
number of quadrangulations with $n$ faces. Indeed, there are $n+2$
ways to point a vertex in such a quadrangulation, and half of these pointings are
such that the root-edge is oriented away from the pointed vertex.

\subsection{More bijections}
\label{sec:more-bijections}
Even though it is difficult to invent bijections, the two bijections
presented above have now been adapted to many other map families, including the two general families described in
Section~\ref{sec:more-func-eq}:   maps with prescribed face
degrees (or, dually, prescribed vertex degrees), and  Eulerian maps
with prescribed face degrees  (dually, bipartite maps with prescribed
vertex degrees). 

On the ``blossoming'' side, Schaeffer's  bijection~\cite{Sch97} was originally
designed, not only for 4-valent maps, but  for maps with
prescribed vertex degrees, \emm provided these degrees are even,. The
case of general degrees was solved (bijectively) a few years later by
a trio of theoretical physicists, Bouttier, Di Francesco and
Guitter~\cite{BDG-planaires}. The  equations they obtain  differ from those
obtained by Bender \& Canfield via the recursive method~\cite{bender-canfield}. See~\cite[Section~10]{mbm-jehanne}
for the correspondence between the two solutions. The case of \emm bipartite, maps with prescribed vertex
degrees was then solved by Schaeffer and the author~\cite{mbm-schaeffer-ising}.
The special case of
 $m$-constellations was solved earlier
 in~\cite{mbm-schaeffer-constellations}.  

On the ``labelled'' side, the extensions of the
Cori-Vauquelin-Schaeffer bijection (which applied to quadrangulations)
to maps with prescribed face degrees, and to \emm Eulerian, maps with
prescribed face degrees, came in a single paper, again due to Bouttier \emm et
al,.~\cite{bouttier-mobiles}.

Other  bijections of the blossoming type exist for certain families of maps
that are constrained, for instance, by higher connectivity conditions, by forbidding
loops, or for dissections of
polygons~\cite{fusy-dissections,PS02,poulalhon-schaeffer}. On the
labelled side, there exist bijections for non-separable
maps~\cite{del-lungo-non-sep,jacquard-schaeffer}, for $d$-angulations
with girth $d$~\cite{bernardi-fusy}, and for maps of
higher genus~\cite{chapuy-marcus-schaeffer}. But the trees are then 
replaced by more complicated objects, namely one-face maps of higher
genus.

\medskip
 All these bijections shed a much better light on
planar maps, by revealing their  hidden tree-like structure. As already
mentioned, they often preserve important statistics, like distances to
the root-vertex. In terms of proving algebraicity results, two
restrictions  should be mentioned:
\begin{itemize}
\item [--] when the degrees are not bounded, it takes a bit of
  algebra to derive, from the system of equations that describes the
  structure of  trees, polynomial equations satisfied by their
  \gfs,
\item [--]  these bijections usually establish the algebraicity of
  the \gf\ of maps that are \emm doubly marked, (like rooted maps with a
  distinguished vertex, or with a distinguished face). The argument
  used to prove Corollary~\ref{coro:4V-alg} has in general no simple
  counterpart. 
\end{itemize}

\section{Coloured planar maps: the recursive approach}
\label{sec:coloured-rec}
We have now reviewed two combinatorial approaches (one recursive, one
bijective) for the enumeration of families of planar uncoloured maps.
We now move to the central topic of this paper, namely the enumeration of
coloured planar maps, and compare both types of problems.

A first simple observation is that algebraicity will no longer be the
rule. Indeed, it has been known for a long time~\cite{mullin-boisees} that the \gf\ of
planar maps, weighted by their number of spanning trees (which is
 the specialisation $\mu=\nu=1$ of the Tutte polynomial~\cite{tutte-dichromate}) is:
$$
\sum_{M\in \mM} t^{\ee(M)} \Tpol_M(1,1)=\sum_{n\ge 0} \frac 1{(n+1)(n+2)} {{2n}\choose
  n}{{2n+2}\choose {n+1}} t^n.
$$
 The asymptotic behaviour of the $n^{\hbox{\small{th}}}$ coefficient, being
$\kappa\, 16^n n^{-3}$, prevents this series from being
algebraic~\cite{flajolet-context-free}. 
The transcendence of this series implies that it cannot be described
by a polynomial equation with one catalytic variable
(Theorem~\ref{generic-thm}). However, it is not difficult to write an equation
with \emm two, catalytic variables for maps weighted by their Tutte
(or Potts) polynomial. This equation is based on the recursive
description~\eqref{Potts-induction}. We present this equation in Section~\ref{sec:eq-func-coloured}, and
another one, for triangulations, in Section~\ref{sec:eq-func-triang}.

The whole point is now to solve equations with two catalytic
variables. Much progress has been made in the past few years on the
\emm linear, case. The equations for coloured maps are not linear, but
they become linear (or quasi-linear, in a sense that will be
explained) for certain special cases, like the enumeration of maps equipped with a spanning
tree or a bipolar orientation. Sections~\ref{sec:bip} and~\ref{sec:trees} are
devoted to these two simpler problems. They show how 
the \emm kernel method,, which was originally designed to  solve
linear equations with one catalytic variable~\cite{hexacephale,bousquet-petkovsek-1,prodinger}, can be extended to
equations with two catalytic variables. Sections~\ref{sec:bip}
and~\ref{sec:trees} actually present two variants of this extension.

We then return to the general case. Following the
complicated approach
used by Tutte to count properly coloured triangulations~\cite{tutte-chromatic-revisited}, we
obtain two kinds of results:
\begin{enumerate}
\item [--] when $q\not = 4$ is of the form $2+2\cos j\pi/m$, for
  integers $j$ and $m$, the \gf\ of $q$-coloured maps satisfies
  also an equation with a \emm single, catalytic variable, and is thus
  algebraic. Explicit results are given for $q=2$ and
  $q=3$;
\item[--] in general, the \gf\ of $q$-coloured maps satisfies a
  non-linear differential equation.
\end{enumerate}
These results, due to Olivier Bernardi and the author~\cite{bernardi-mbm,bernardi-mbm-de}, are presented without proof in  Sections~\ref{sec:beraha}
and~\ref{sec:de}.  We do not make explicit the differential equation
satisfied by the \gf\ of $q$-coloured maps, but give  an (explicit)
\emm system, of differential equations, which we hope to simplify in a
near future.

\subsection{A functional equation for coloured planar maps}
\label{sec:eq-func-coloured}
Let $\mM$ be the set of rooted maps. For $M$ in $\mM$,
recall that  $\dv(M)$ and $\df(M)$ denote respectively
the degrees of the root-vertex and root-face of $M$.
We define the \emm Potts \gf,\ of planar maps by:
\beq\label{potts-planar-def}
\gM(x,y)\equiv \gM(q,\nu,t,w;x,y)
=\frac 1 q \sum_{M\in\mM}t^{\ee(M)}w^{\vv(M)-1}
x^{\dv(M)}y^{\df(M)}
\Ppol_M(q,\nu).
\eeq
Since there is a finite number of maps with a given number of edges, 
and $\Ppol_M(q,\nu)$ is a multiple of $q$,
the generating function $\gM(x,y)$ is a power series in $t$ with
coefficients in $\Q[q,\nu,w,x,y]$. Keeping track of the number of
vertices allows us to go back and forth between the Tutte and Potts
polynomial, thanks to~\eqref{eq:Tutte=Potts}.
 \begin{prop}\label{prop:eq-M}
  The Potts \gf\ of planar maps satisfies:   
\begin{eqnarray}\label{eq:M}
\gM(x,y)&\!\!=\!\!& 1
+xywt\left(qy+(\nu-1)(y-1)\right)\gM(x,y)\gM(1,y)
\nonumber\\
&&+xyt(x\nu-1)\gM(x,y)\gM(x,1)\\
&&
+xywt(\nu-1)\frac{x\gM(x,y)-\gM(1,y)}{x-1}+xyt\frac{y\gM(x,y)-\gM(x,1)}{y-1}.
\nonumber
\end{eqnarray}
\end{prop}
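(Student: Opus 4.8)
The plan is to derive~\eqref{eq:M} by a deletion/contraction analysis of the root-edge, exactly mirroring the recursion~\eqref{Potts-induction} for the Potts polynomial, and keeping track of the two catalytic statistics $\dv$ and $\df$ by the variables $x$ and $y$. The subtlety compared to the uncoloured case of~\eqref{1cat-planaires1} is that the recursion~\eqref{Potts-induction} for $\Ppol_M$ is itself a sum of two terms (a deletion term and a contraction term), so we will have to combine the root-edge deletion construction of Figure~\ref{fig:map-del}--\ref{fig:map-del2} with the root-edge contraction construction of Figure~\ref{fig:map-enum-contract}, and both root-degrees $\dv$ and $\df$ must be monitored simultaneously. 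This is precisely the phenomenon announced in the paragraph ``Contraction vs.\ deletion.''

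First I would fix a non-atomic rooted map $M$ with root-edge $e$, and apply~\eqref{Potts-induction} in the form $\Ppol_M=\Ppol_{M\backslash e}+(\nu-1)\Ppol_{M\slash e}$. Summing over $M\in\mM\setminus\{m_0\}$ with weight $\frac1q t^{\ee(M)}w^{\vv(M)-1}x^{\dv(M)}y^{\df(M)}$ then splits $\gM(x,y)-1$ into a ``deletion part'' and a ``contraction part.'' For the deletion part I would run the bijection of Figures~\ref{fig:map-del}--\ref{fig:map-del2}: a rooted map with root-edge $e$ either has $e$ separating, giving an ordered pair $(M_1,M_2)$, or not, giving a single map $M'$ with $d+1$ choices of re-rooting where $d=\df(M')$. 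Here I must be careful to track how $\dv$ and $\df$ of $M$, and the number of vertices, relate to the corresponding data of $M\backslash e$ (which is what $\Ppol_{M\backslash e}$ sees); since $M\backslash e$ has the same vertex set as $M$ but one fewer edge, the factor $t$ appears once and $w$ is unchanged, and in the non-separating case the re-rooting produces faces of outer degree $1,2,\dots,d+1$, yielding the divided difference $xyt\,(y\gM(x,y)-\gM(x,1))/(y-1)$ together with the term $xyt(x\nu-1)\gM(x,y)\gM(x,1)$ — the factor $x\nu-1$ encoding whether the newly-glued edge at the root-vertex is monochromatic (contributing the extra $\nu$, hence the weight $q y+(\nu-1)(y-1)$ split that also feeds the first line; I would reconcile signs with~\eqref{Potts-induction} and~\eqref{eq:Potts-1components}, where the factor $1/q$ appears when gluing at a vertex, explaining the $w$ versus $q$ bookkeeping). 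Dually, for the contraction part I would run the construction of Figure~\ref{fig:map-enum-contract} with the roles of vertex and face swapped: $M\slash e$ has the same face set as $M$ but one fewer vertex and one fewer edge, so the weight picks up $t$ and $wt$'s worth of a vertex is removed, i.e.\ a factor $t/w\cdot w = \dots$ — more honestly, contracting $e$ multiplies $w^{\vv-1}$ by $1/w$, giving the $xywt$ prefactors in lines one and three and the second divided difference $xywt(\nu-1)(x\gM(x,y)-\gM(1,y))/(x-1)$, the $(\nu-1)$ being exactly the coefficient of $\Ppol_{M\slash e}$ in~\eqref{Potts-induction}.

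The main obstacle, as usual in this kind of derivation, is the bookkeeping: getting every monomial prefactor $x^ay^bw^ct^d(\nu-1)^e q^f$ correct, in particular correctly attributing to the two cases ``$e$ is a loop at the root-vertex / $e$ is an isthmus bounding the root-face'' and their duals, and verifying that the $q$'s coming from~\eqref{eq:Potts-1components} (gluing at a vertex) combine with the $w$'s so that the final equation only has $q$ where displayed. A clean way to organise this is to first write the fully expanded recursion as a sum over the four possibilities for $e$ (loop/isthmus/neither, times the two terms of~\eqref{Potts-induction}), translate each into a product or divided-difference of $\gM$'s, and then check the result against small cases: specialise $q\to 1$, $\nu\to$ generic (where $\Ppol_M(1,\nu)=\nu^{\ee(M)}$, i.e.\ all colourings monochromatic up to the factor, so $\gM$ should reduce to an uncoloured map series in the variable $\nu t$) and against $w=1$, $q$ generic, $\nu=0$ (proper colourings), and most tellingly recover~\eqref{1cat-planaires1} in an appropriate limit — e.g.\ setting $x=1$, $q=1$ and relabelling should collapse~\eqref{eq:M} to Tutte's one-catalytic-variable equation. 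Once the four pieces are assembled and the specialisation checks pass, the proposition follows, since every rooted map is counted exactly once on each side. I expect no conceptual difficulty beyond this; the whole content is that~\eqref{Potts-induction} is linear in the deletion and contraction of a single distinguished edge, and that the recursive descriptions of maps by root-edge deletion and by root-edge contraction are both bijective.
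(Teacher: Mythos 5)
Your plan is essentially the paper's own proof: apply the deletion--contraction recursion~\eqref{Potts-induction} to the root-edge, split non-atomic maps according to whether the root-edge is an isthmus, a loop, or neither, use~\eqref{Potts-disjoint} and~\eqref{eq:Potts-1components} for the first two cases and a divided difference for the third, and treat the contraction part as the vertex/face dual of the deletion part, the extra factor $w$ accounting for the vertex lost in contraction. One small caveat: the coefficients $x\nu-1$ and $qy+(\nu-1)(y-1)$ are not read off directly from the monochromatic status of the re-added edge, but emerge only after summing the deletion contribution with $(\nu-1)$ times the contraction contribution (including the subtraction of the loop-creating re-additions in the non-separating case), which is exactly what your final ``assemble the pieces and check'' step would do.
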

Observe that~\eqref{eq:M} characterises  $\gM(x,y)$ entirely
as a series in $\Q[q,\nu,w,x,y][[t]]$ (think of extracting
recursively the coefficient of $t^n$ in this equation).  
Note also
that when $\nu=1$, then $\Ppol_M(q,\nu) = q^{\vv(M)}$, so that we are
essentially counting planar maps by edges, vertices, and by
the root-degrees $\dv$ and $\df$.  The variable $x$ is no longer
catalytic: it can be set to 1 in the functional equation, which
becomes an equation for $M(1,y)$ with a single catalytic variable~$y$.
\begin{proof}
  This equation is not difficult to establish using the recursive definition
of the Potts polynomial~\eqref{Potts-induction} in terms of deletion
and contraction 
of edges. Of course, one chooses to delete or contract the root-edge
of the map. Let us sketch the proof to see where each term of the
equation comes from.
Equation~\eqref{Potts-induction} gives  
$$
\gM(x,y)=1+\gM_{\backslash }(x,y)+(\nu-1)\gM_{\slash }(x,y),
$$
where the term 1 is the contribution of the atomic map $m_0$, 
$$
\gM_{\backslash }(x,y)=\frac 1 q  \sum_{M\in
  \mM\setminus\{m_0\}}t^{\ee(M)}w^{\vv(M)-1}x^{\dv(M)}y^{\df(M)}
\Ppol_{M\backslash e}(q,\nu),
$$ 
and 
$$
\gM_{\slash }(x,y)=\frac 1 q  \sum_{M\in
  \mM\setminus\{m_0\}}t^{\ee(M)}w^{\vv(M)-1}x^{\dv(M)}y^{\df(M)} 
\Ppol_{M\slash e}(q,\nu),
$$
where $M\backslash e$ and $M\slash e$ denote respectively the maps
obtained from $M$ by deleting and contracting the root-edge $e$.

\noindent {\bf A. The series  $\gM_{\backslash }$.}
We consider the partition
$\mM\setminus\{m_0\}=\mM_1\uplus\mM_2\uplus\mM_3$, where
$\mM_1$  (resp.~$\mM_2$,  $\mM_3$) is the subset of
maps in $\mM\setminus\{m_0\}$ such that the root-edge is 
an isthmus (resp.~a loop, resp.~neither an isthmus nor a loop). 
We denote by  $\gM^{(i)}(x,y)$, for $1\le i
  \le 3$, the contribution of 
$\mM_i$ to the generating function $\gM_{\backslash}(x,y)$, so that
$$
\gM_{\backslash}(x,y)=\gM^{(1)}(x,y)+\gM^{(2)}(x,y)+\gM^{(3)}(x,y).
$$

\noindent {$\bullet$ Contribution of $\mM_1$.}
Deleting the root-edge of a map in $\mM_1$ leaves two maps
$M_1$ and $M_2$, as illustrated 
in Figure~\ref{fig:map-del} (left).  The Potts polynomial of this pair can be
determined using~\eqref{Potts-disjoint}. One thus obtains
$$
\gM^{(1)}(x,y)=qxy^2tw\, \gM(1,y)\gM(x,y),
$$
as the degree of the root-vertex of $M_1$ does not contribute to the degree of
the root-vertex of the final map.

\noindent {$\bullet$ Contribution of $\mM_2$.}
Deleting the root-edge of a map in $\mM_2$ leaves two maps
$M_1$ and $M_2$ attached by their root vertex, as illustrated 
in Figure~\ref{fig:map-enum-contract}.  The Potts polynomial of this pair can be
determined using~\eqref{eq:Potts-1components}. One thus obtains
$$
\gM^{(2)}(x,y)=x^2yt\, \gM(x,1)\gM(x,y),
$$
as the degree of the root-face of $M_1$ does not contribute to the
degree of the root-face of the final map.

\noindent {$\bullet$ Contribution of $\mM_3$.}
Deleting the root-edge of a map in $\mM_3$ leaves a single map
$M$.  If the outer degree of $M$ is $d$, there are $d+1$ ways to add a
new (root-)edge to $M$, as illustrated in Figure~\ref{fig:map-del2} (right). However, a number of
these additions create a loop, and thus their \gf\ must be subtracted. One thus obtains
$$
\gM^{(3)}(x,y)=xt \sum_{d\ge 0}
M_d(q,\nu,t,w;x)(y+y^2+\cdots + y^{d+1} )- xyt\, \gM(x,1)\gM(x,y),
$$
where $M_d(q,\nu,w,t;x)$ is the coefficient of $y^d$ in $M(x,y)$. This
gives
$$
\gM^{(3)}(x,y)=xyt \frac{yM(x,y)-M(x,1)}{y-1}
- xyt\, \gM(x,1)\gM(x,y),
$$
and finally
\begin{multline*}
  \gM_{\backslash}(x,y)=qxy^2tw\, \gM(1,y)\gM(x,y)
+x(x-1)yt\, \gM(x,1)\gM(x,y)
\\+xyt\, \frac{yM(x,y)-M(x,1)}{y-1}.
\end{multline*}

\noindent 
{\bf B. The series  $\gM_{\slash }$.} The study of this series is of
course very similar to the previous one, by duality. One finds:
\begin{multline*}
\gM_{\slash}(x,y)=x^2yt\, \gM(x,1)\gM(x,y)
+xy(y-1)tw\, \gM(1,y)\gM(x,y)\\
+xytw \,\frac{xM(x,y)-M(1,y)}{x-1}.
\end{multline*}

Adding the series $1$,  $\gM_{\backslash }$ and  $(\nu-1)\gM_{\slash }$
gives the functional equation.
\end{proof}

\noindent{\bf Remark.}
 Equation~\eqref{eq:M} is equivalent to an
equation written by Tutte in 1971:
\begin{multline}\label{eq:tM}
  \gtM(x,y)= 1+xyw(y\mu-1)\gtM(x,y)\gtM(1,y)~+~xyz(x\nu-1)\gtM(x,y)\gtM(x,1) \\
+xyw\parfrac{x\gtM(x,y)-\gtM(1,y)}{x-1}+xyz\parfrac{y\gtM(x,y)-\gtM(x,1)}{y-1},
\end{multline}
where $\gtM(x,y)$ counts  maps weighted by their Tutte
polynomial~\cite{Tutte:dichromatic-sums}:
$$
\gtM(x,y)\equiv\gtM(\mu,\nu,w,z; x,y)=
\sum_{M\in\mM}w^{\vv(M)-1}z^{\ff(M)-1}x^{\dv(M)}y^{\df(M)} \Tpol_M(\mu,\nu).
$$
We call the above series \emm the Tutte \gf\ of planar maps.,
 The relation~\eqref{eq:Tutte=Potts} between the Tutte and Potts
polynomials and Euler's relation ($\vv(M)+\ff(M)-2=\ee(M)$) give
$$
\gM(q,\nu,t,w;x,y)=\gtM\left(1+\frac{q}{\nu-1},\nu,(\nu-1)tw,t;x,y\right),
$$
from which~\eqref{eq:M} easily follows.

\subsection{More functional equations}
\label{sec:eq-func-triang}

In a similar fashion, one can write a functional equation for coloured
non-separable planar maps~\cite{liu-non-sep}. It is equivalent
to~\eqref{eq:M} via a simple composition argument~\cite[Section~14]{bernardi-mbm}. Writing equations for coloured maps
with prescribed face degrees is  harder, as the
contraction of the root-edge changes the degree of the finite face
located to the left of the root-edge. This is however not a serious problem if one
counts \emm proper, colourings of triangulations (the faces of degree
2 that occur can be ``smashed'' into a single edge), and in 1971, Tutte
came up with the following equation, the solution of which  kept him busy during
the following decade:
\begin{multline}\label{eq-Tutte}
\gT(x,y)=xy^2q(q-1)+\frac{xz}{yq}\gT(1,y)\gT(x,y)\\
+xz\frac{\gT(x,y)-y^2\gT_2(x)}{y}-x^2yz\frac{\gT(x,y)-\gT(1,y)}{x-1}
\end{multline}
 where $T_2(x)=[y^2]T(x,y)$.
The series $T(x,y)$ defined by this equation  is
\beq\label{T-def}
T(x,y)= \sum_{T} z^{\ff(T)-1}x^{\dv(T)} y^{\df(T)} \Ppol_T(q,0),
\eeq
where the sum runs over all non-separable near-triangulations (maps
in which all finite faces have degree 3). Note that the number of edges and the number of vertices of $T$ can be obtained from $\ff(T)$ and $\df(T)$, using
\beq\label{autres-param}
\vv(T)+\ff(T)=2+\ee(T) \quad \hbox{and} \quad 2\ee(T)= 3(\ff(T)-1)+\df(T).
\eeq

There seems to be no straightforward extension to the Potts
\gf\footnote{Although another type of functional equation is given
  in~\cite[Sec.~3]{eynard-bonnet-potts} for the Potts \gf  \ of cubic
  maps, using matrix integrals.}, and it is not
until  recently that an equation was obtained for the Potts
\gf\  $\gQ(x,y)$ of \emm quasi-triangulations,~\cite{bernardi-mbm}. We refer to
that paper for the precise definition of this class of maps, which is
not so important here. What \emm is, important is that the series $\gQ(0,y)$ is the Potts \gf\ of near-triangulations:
$$
\gQ(0,y)\equiv
\gQ(q,\nu,t,z;0,y)=
\frac 1 q \sum_{T\in\mNT}
t^{\ee(T)}
z^{\ff(T)-1}y^{\df(T)}
\Ppol_T(q,\nu).
$$

\begin{prop}\label{prop:eq-Q}
The Potts \gf\ of quasi-triangulations satisfies 
\begin{multline}
 \label{eq:Q}
Q(x,y) = 1 
+ zt\, \frac{Q(x,y)-1-yQ_1(x)}{y}+  xzt (Q(x,y)-1) + xyzt Q_1(x) Q(x,y)
\\
+yzt(\nu-1)\gQ(x,y)(2x\gQ_1(x)+\gQ_2(x))
+y^2t\left(q+ \frac{\nu-1}{1-xzt\nu}\right) \gQ(0,y)\gQ(x,y)
\\+\frac{yt(\nu-1)}{1-xzt\nu} \frac{\gQ(x,y)-\gQ(0,y)}{x}
  \end{multline}
where $\gQ_1(x)=[y]\gQ(x,y)$ and $\displaystyle
\gQ_2(x)=[y^2]\gQ(x,y)=\frac{(1-2xzt\nu)}{zt\nu}\gQ_1(x)$.
\end{prop}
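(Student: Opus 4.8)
The plan is to follow the same recursive strategy used for Proposition~\ref{prop:eq-M}, applying the deletion/contraction recursion~\eqref{Potts-induction} of the Potts polynomial to the root-edge of a quasi-triangulation, and carefully bookkeeping how each operation affects the outer degree (variable $y$), the root-vertex degree (variable $x$), the number of edges (variable $t$) and the number of finite faces (variable $z$). The term $1$ records the atomic map. Write $\gQ(x,y) = 1 + \gQ_{\backslash}(x,y) + (\nu-1)\gQ_{\slash}(x,y)$ as before, where the first summand comes from deleting the root-edge and the second from contracting it. The novelty compared with general maps is that contracting the root-edge of a near-triangulation merges the finite triangular face to its left into the root-face, so the decomposition must be set up on the larger class of quasi-triangulations precisely so that both deletion and contraction stay inside the class; I would take the definition of that class and its stability properties from~\cite{bernardi-mbm} as given.

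First I would handle $\gQ_{\backslash}$ by splitting $\mathcal{Q}\setminus\{m_0\}$ according to whether the root-edge is an isthmus, a loop, or neither, exactly as in the proof of Proposition~\ref{prop:eq-M}. The isthmus case glues two quasi-triangulations at a vertex and contributes a term proportional to $q\,\gQ(0,y)\gQ(x,y)$ (with the appropriate powers of $y$ and $t$), using~\eqref{Potts-disjoint}; the loop case glues at a vertex and uses~\eqref{eq:Potts-1components}; the generic case produces a single quasi-triangulation to which one re-adds the root-edge in $d+1$ ways when the outer degree is $d$, generating the divided difference $\bigl(\gQ(x,y)-1-y\gQ_1(x)\bigr)/y$ or $\bigl(\gQ(x,y)-\gQ(0,y)\bigr)/x$ depending on which boundary walk one follows. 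Second, I would treat $\gQ_{\slash}$ dually: contracting the root-edge identifies its endpoints and absorbs the adjacent triangular face, which is the source of the factor $1/(1-xzt\nu)$ — geometrically this resums the ``stack'' of faces of degree $2$ that can pile up behind the contracted edge before one reaches a genuine face of degree $3$, each such face contributing a factor $xzt\nu$ (one new edge, one new face, and monochromatic because contraction forces equal colours). The terms $2x\gQ_1(x)+\gQ_2(x)$ and the relation $\gQ_2(x) = \tfrac{1-2xzt\nu}{zt\nu}\gQ_1(x)$ should fall out of isolating the small-outer-degree cases ($\df = 1,2$) where the generic construction is not available, exactly as $\gQ_0,\gQ_1$ appear in~\eqref{eq:NT}.

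Finally I would add $1$, $\gQ_{\backslash}$ and $(\nu-1)\gQ_{\slash}$, collect like terms, and check that the result coincides with~\eqref{eq:Q}; as in Proposition~\ref{prop:eq-M} one also notes that~\eqref{eq:Q} determines $\gQ(x,y)$ uniquely in $\Q[q,\nu,x,z][[t]]$ by extracting the coefficient of $t^n$ recursively, which simultaneously proves existence and uniqueness of the series. The main obstacle I anticipate is \emph{not} the Potts bookkeeping — that is parallel to the proof already given — but rather the combinatorial analysis of the contraction step on quasi-triangulations: correctly identifying which maps arise, ruling out double-counting when the absorbed face interacts with the rest of the boundary, and pinning down the exact role of the geometric series $1/(1-xzt\nu)$ and of the low-degree correction terms. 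This is where one genuinely needs the structure theory of quasi-triangulations from~\cite{bernardi-mbm}, and I would lean on that reference for the delicate case distinctions rather than reprove them here.
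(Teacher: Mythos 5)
Your overall strategy --- apply the deletion/contraction recursion~\eqref{Potts-induction} to the root-edge, work in a class enlarged so that both operations stay inside it, and split the contributions as in Proposition~\ref{prop:eq-M} --- is indeed the route taken in~\cite{bernardi-mbm}; note that the present survey does not reprove this proposition but cites that paper, both for the definition of quasi-triangulations and for the derivation. The difficulty is that your write-up defers to~\cite{bernardi-mbm} exactly the material that produces the specific right-hand side of~\eqref{eq:Q}: the definition of quasi-triangulations and of what the variable $x$ records, the verification that deletion and contraction (together with the treatment of the degree-$2$ faces they create) do not leave the class, and the case analysis yielding the individual terms. Nothing in your outline forces, say, the combination $2x\gQ_1(x)+\gQ_2(x)$, the factor $y^2t\bigl(q+\frac{\nu-1}{1-xzt\nu}\bigr)$ in front of $\gQ(0,y)\gQ(x,y)$, or the prefactor of the divided difference in $x$; as it stands, the argument only makes plausible that \emph{some} equation of this general shape holds, not the stated one, so there is a genuine gap rather than a proof.

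Two specific points are also off. First, the divided difference $\frac{\gQ(x,y)-\gQ(0,y)}{x}$ cannot come from the deletion step, as you suggest (``depending on which boundary walk one follows''): in~\eqref{eq:Q} it carries the factor $(\nu-1)$, so in a decomposition of the form $\gQ=1+\gQ_{\backslash}+(\nu-1)\gQ_{\slash}$ it must arise from the contraction part, and its coefficient $\frac{yt}{1-xzt\nu}$ has to be derived there, not attributed heuristically to a resummation of stacked digons. Second, the relation $\gQ_2(x)=\frac{1-2xzt\nu}{zt\nu}\gQ_1(x)$ is not analogous to the appearance of $\NT_0,\NT_1$ in~\eqref{eq:NT}: those are unknown series isolated by the decomposition, whereas here one asserts an identity between two coefficient series, which requires its own combinatorial argument (a correspondence between quasi-triangulations of outer degree $2$ and of outer degree $1$) and does not ``fall out'' of the generic recursion. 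Until the class is defined and these points are argued, the key steps that make~\eqref{eq:Q} this particular equation remain unproved in your proposal.
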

Tutte's equation~\eqref{eq-Tutte} for non-separable, properly coloured
near-triangulations can be recovered from this
proposition~\cite[Section~14]{bernardi-mbm}. In Section~\ref{sec:trees}, we
will use the following (equivalent) equation for the \gf\ of
quasi-triangulations, weighted by their Tutte polynomial:
\begin{multline}\label{eq:Tutte-triang}
  \gtQ ( x,y ) =
1+tz \, \frac {
 \gtQ ( x,y ) -1-y\gtQ_1 ( x ) 
}{y}+xtz \left( \gtQ ( x,y ) -1\right)
 +xytz  \gtQ_1 ( x )   \gtQ ( x,y )\\
+tz{y}(\nu-1)\gtQ ( x,y ) \left( 2x\gtQ_1 ( x ) +\gtQ_2 ( x )\right)
\\
+{y}^{2}t\left(\mu\, +{\frac {{t}x\nu\,z }{1-x
\nu\,tz}}\right)\gtQ ( 0,y )\gtQ ( x,y ) 
+\frac{yt}{1-x\nu\,tz } {\frac {\gtQ ( x,y ) -\gtQ ( 0,y )   }{ x}},
\end{multline}
where $\gtQ_1(x)=[y]\gtQ(x,y)$ and $\displaystyle
\gtQ_2(x)=[y^2]\gtQ(x,y)=\frac{(1-2xzt\nu)}{zt\nu}\gtQ_1(x)$.
We call the specialisation $\gtQ(0,y)$  \emm the Tutte \gf\ of
near-triangulations:,
$$
\gtQ(0,y)\equiv
\gtQ(q,\nu,t,z;0,y)=
\sum_{T\in\mNT}
t^{\ee(T)}
z^{\ff(T)-1}y^{\df(T)}
\Tpol_T(\mu,\nu).
$$

\subsection{A linear case: bipolar orientations of maps}
\label{sec:bip}
Let $G$ be a connected graph with a root-edge $(s,t)$. A \emm bipolar
orientation, of $G$ is an acyclic orientation of the edges of $G$ such that
$s$ is the single source and $t$ the single sink.  Such orientations
exist if and only if $G$ is non-separable.  
It is known~\cite{greene-zaslavsky,lass-orientations} that the number of  bipolar
orientations of  $G$  is:
$$
(-1)^{\vv(G)} \frac{\partial \Ppol_G}{\partial q} (1,0).
$$
This number is also called the \emm chromatic invariant, of
$G$~\cite[p.~355]{Bollobas:Tutte-poly}.
This expression implies that  the \gf\  of  (non-atomic) planar maps equipped with a bipolar orientation,
counted by  edges ($t$),  non-root vertices ($w$), degree of the root-vertex ($x$)
and of the root-face ($y$) is
$$
B(t,w;x,y)= - \frac{\partial }{\partial q} \left. \big(q\gM(q,0,t,-w;x,y)-q\big)\right|_{q=1}
=- \frac{\partial \gM}{\partial q} (1,0,t,-w;x,y),
$$
where $\gM(x,y)$ is the Potts \gf\ of planar maps, defined
by~\eqref{potts-planar-def} (we have used $\gM(1,0,t,w;x,y)=1$).
By differentiation, it is easy to derive from~\eqref{eq:M} an equation
with two catalytic variables satisfied by $B(x,y)$. Using again
$\gM(1,0,t,w;x,y)=1$, this equation is found to be \emm linear,: 
\beq\label{eq:bip-M}
 \left( 1+
\frac{xy  tw}{1-x}+\frac{xy  t}{1-y}
\right)  \gB  ( x,y )
=
x{y}^{2}wt+{\frac {{x}^{2}ywt }{1-x}}\gB  ( 1,y )
+{\frac {x{y}^{2}t  }{1-y}}\gB  ( x,1 ).
\eeq

Similarly,  using~\eqref{autres-param}, one finds that the \gf\  of
planar near-triangulations equipped with a bipolar orientation, 
counted by  non-root faces  ($z$), degree of the root-vertex ($x$)
and of the root-face ($y$) is
$$
\gBT(z;x,y)= - \frac{\partial \gT}{\partial q} (0,i
z;x,i y),
$$
where $i ^2=-1$ and $\gT(q,z;x,y)$ is Tutte's \gf\ for coloured non-separable
near-triangulations, defined by~\eqref{T-def}.
Given that $\gT(1,z;x,y)=0$, the equation satisfied by $\gBT$ is again
linear:
\beq\label{eq:bip-T}
\left(1 -{\frac {xz}{y}}-{\frac {z{x}^{2}y}{x-1}} \right) \gBT ( x,y
)
=
 x{y}^{2}-xzy\gBT_2 ( x ) -{\frac {z{x}^{2}y \gBT ( 1,y ) }{x-1}}
\eeq
with $\gBT_2 ( x )= [y^2] \gBT(x,y)$.

In the past few years, much progress has been made in the solution
of linear equations with two catalytic variables~\cite{mbm-kreweras,mbm-motifs,mbm-petkovsek2,mbm-mishna,Mishna-jcta,Mishna-Rechni}. It is now understood
that a certain group of rational transformations, which leaves
invariant the \emm kernel, of the equation (the
coefficient of $\gB(x,y)$) plays an important role. In particular,
when this group is finite, the equation can often be solved in an
elementary way, using what is sometimes called the \emm algebraic
version, of the kernel method~\cite{mbm-kreweras,mbm-mishna}. This is
the case  for  
both~\eqref{eq:bip-M} and~\eqref{eq:bip-T}. We detail the solution of~\eqref{eq:bip-T}, and explain how to adapt it to solve~\eqref{eq:bip-M}.
 \begin{prop}\label{prop:bip-triang}
 The number of bipolar orientations of near-triangulations   having $m+1$
vertices is
$$
\frac{(3m)!}{(4m^2-1)m!^2(m+1)!}.
$$
 The number of bipolar orientations of near-triangulations   having $m+1$
vertices and  a root-face of degree $j$  is
\beq\label{bip-sol-T}
\frac{j(j-1)(3m-j-1)!}{m!(m+1)!(m-j+1)!}.
\eeq
 For $m\ge 2$, the number of bipolar orientations of
 near-triangulations   having $m+1$ 
vertices,  a root-vertex of degree $i$ and  a root-face of degree $j$  is
 $$
\frac{(i-1)(j-1)(2m-j-2)!(3m-i-j-1)!}
{(m-1)!m! (m-j+1)!(2m-i-j+1)!} \left( (2j+i-6)m+i+3j-j^2-ij\right).
$$
The corresponding \gfs\ in $1$, $2$ and $3$ variables are D-finite.
\end{prop}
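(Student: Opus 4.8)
The plan is to solve the linear equation~\eqref{eq:bip-T} for $\gBT(x,y)$ by the algebraic kernel method, extract the coefficients by Lagrange inversion, and deduce the three formulas; the D-finiteness statement will follow from closure properties. First I would rewrite~\eqref{eq:bip-T} by clearing denominators, putting it in the form
$$
\bigl(K(x,y)\bigr)\gBT(x,y) = x y^2 (x-1) - x z y (x-1)\gBT_2(x) - z x^2 y\,\gBT(1,y),
$$
where $K(x,y)=(x-1)-xz(x-1)/y - z x^2 y$ is the kernel (up to the factor $x-1$). The kernel is quadratic in $y$ (after multiplying by $y$) and also low-degree in $x$, so one looks for the group of birational transformations fixing $K$: the two $y$-roots $Y_\pm(x)$ of $K(x,y)=0$, viewed as Puiseux/power series in $z$, and similarly the $x$-roots; these generate a finite group (this is the situation of~\cite{mbm-kreweras,mbm-mishna}). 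Substituting the admissible kernel root(s) — the one that is a formal power series in $z$ with the right valuation so that $\gBT(x,Y)$, $\gBT(1,Y)$, $\gBT_2(x)$ remain well-defined series — kills the left-hand side and yields relations among the unknown one-variable series $\gBT(1,y)$ and $\gBT_2(x)$. Taking suitable orbit sums / differences of these relations (the ``algebraic version'' of the kernel method) isolates each unknown: one obtains a closed rational-in-the-root expression for $\gBT_2(x)$, hence for $\gBT(x,1)$ and $\gBT(1,y)$, and then~\eqref{eq:bip-T} itself gives $\gBT(x,y)$ explicitly as a rational function of $x$, $y$, $z$ and of the relevant algebraic kernel roots.

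Next I would linearise the algebraic kernel roots by a rational parametrisation. Because the kernel is essentially quadratic, $Y(x)$ and the $x$-root are algebraic of degree~2 over $\mathbb{Q}(z)$ (or low degree over $\mathbb{Q}(x,z)$); introducing a single uniformising series $X$ (a power series in $z$ with $X=O(z)$ or similar) rationalises everything, as in the $\NT_1$ example worked out after Theorem~\ref{generic-thm}. In this parametrisation the counting variable $z$ becomes an explicit polynomial (or rational function) in $X$, and $\gBT_2(X)$, $\gBT(1,y)$, $\gBT(x,y)$ become rational functions of $X$ (and $x,y$). Then the Lagrange inversion formula extracts $[z^m]$: applied to $\gBT_2$ this yields the number of bipolar orientations of near-triangulations with root-face degree~$2$ and $m$ non-root faces, which by~\eqref{autres-param} corresponds to $m+1$ vertices; summing (or setting $y=1$) gives the first formula; keeping the variable $y$ and doing a bivariate Lagrange inversion (or extracting $[y^j]$ from the rational expression for $\gBT(1,y)$) gives~\eqref{bip-sol-T}; and the full two-catalytic-variable series, after parametrisation, gives by a double coefficient extraction the three-variable formula. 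The binomial-type closed forms should fall out after simplification of the Lagrange-inversion integrals, exactly as in the near-triangulation computation sketched in Section~\ref{sec:quad}.

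Finally, the D-finiteness of the one-, two- and three-variable generating functions: since each is obtained as an algebraic function of $x,y,z$ and of the kernel roots $Y_\pm$, $X$ (all algebraic over $\mathbb{Q}(x,y,z)$), each generating function is itself algebraic over $\mathbb{Q}(x,y,z)$; algebraic series are D-finite (as recalled in Section~\ref{sec:def}), and D-finiteness is preserved under the specialisations $x=1$, $y=1$. The main obstacle I anticipate is the bookkeeping in the algebraic kernel method: correctly identifying which kernel root is the ``small'' one (the formal power series in $z$ with the correct valuation so that all substitutions stay within the ring of formal power series), verifying that the orbit of the relevant transformation group is finite, and taking the right combination of kernel-root equations to eliminate the unknown boundary series $\gBT(1,y)$ and $\gBT_2(x)$ cleanly. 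Once the explicit rational parametrisation is in hand, the passage to the closed formulas via Lagrange inversion and the D-finiteness conclusion are routine.
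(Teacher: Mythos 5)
There is a genuine gap at the heart of your plan: the series you are trying to compute are provably \emph{not} algebraic, so the route ``express $\gBT_2(x)$ and $\gBT(1,y)$ as rational functions of the kernel roots, rationalise by a uniformising parameter, conclude by Lagrange inversion'' cannot succeed. Indeed the first formula of the proposition is $\frac{1}{(4m^2-1)(m+1)}\frac{(3m)!}{m!^3}\sim c\,27^m m^{-4}$, and a negative integer exponent is incompatible with the singular behaviour of algebraic series (the same argument the paper uses for spanning trees, where the exponent is $-3$); for general maps the analogous Proposition~\ref{prop:bip} produces the Baxter-type numbers, again non-algebraic. Consequently your final step, deducing D-finiteness from algebraicity, is also unfounded. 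Substituting a kernel root $Y(x)$ does give relations between the boundary series, but in this genuinely two-catalytic-variable situation no finite combination of such substitutions isolates them as algebraic expressions, and no rational parametrisation of $z$ exists.

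What the paper actually does with the finite group is different in a crucial way: after setting $x=1/(1-u)$, the kernel $1-u-z(y\bu+\by)$ is invariant under the two involutions $\Phi:(u,y)\mapsto(yz\bu,y)$ and $\Psi:(u,y)\mapsto(u,u\by)$, which generate a group of order $6$; one writes the equation at all six orbit elements and takes the \emph{alternating orbit sum}, in which the unknown series $R(u)=zu\gBT_2(1/(1-u))$ and $S(y)=z\gBT(1,y)$ cancel identically. Dividing by the kernel, and using that $\gBT(x,y)$ has coefficients in $xy^2\Q[x,y]$, one identifies $u\by\,\gBT(1/(1-u),y)$ with the positive part in $u$ and $y$ of an explicit rational function; this positive-part extraction is exactly what produces D-finite but transcendental series. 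The closed formulas then come from an elementary expansion of $1/(1-u-z(\by+y\bu))$ as a triple binomial sum (not from Lagrange inversion, which the paper reserves for the spanning-tree cases where the equations really are quasi-linear and algebraic), and D-finiteness follows from the shape of these formulas together with the closure properties of D-finite series. So your instinct to use the group of the kernel is right, but the correct mechanism is cancellation of the unknowns in the orbit sum followed by coefficient extraction, not solving for them algebraically.
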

The last two formulas  are due to Tutte~\cite[Eqs.~(32)
  and~(34)]{lambda12}.  He also derived them from the functional
equation~\eqref{eq:bip-T}, but his proof involved a lot of guessing, while
ours is constructive. The first formula in
Proposition~\ref{prop:bip-triang} seems to be new.
\begin{proof}
 It will prove convenient
to set $x=1/(1-u)$. After multiplying~\eqref{eq:bip-T} by $(x-1)/x^2/y$,
the equation we want to solve reads:
\beq\label{eq-BY-uy}
  u \by 
\left(1-u- z ( y\bu +\by ) \right)
\gBT \left( \frac 1{1-u},y \right)  
=uy-R(u)-S(y),
\eeq
with $\bu=1/u$, $\by=1/y$, $R(u)=zu\gBT_2 \left( \frac 1{1-u} \right)$ and $S(y)=z \gBT ( 1,y )$.
 Let $K(u,y)=1-u- z ( y\bu +\by ) $ be the \emm kernel , of
this equation.  This kernel is invariant by the
transformations: 
$$
\Phi: (u,y) \mapsto (yz\bu,y) \quad \hbox{and} \quad
\Psi: (u,y) \mapsto (u,u\by).
$$
Both transformations are involutions, and, by applying them
iteratively to $(u,y)$, one obtains 6
pairs $(u',y')$ on which $K(\cdot, \cdot)$ 
takes the same value:
\beq\label{6id}
(u,y) {\overset{\Phi}{\longrightarrow}}  (yz\bu, y) {\overset{\Psi}{\longrightarrow}}  (yz\bu, z\bu)  {\overset{\Phi}{\longrightarrow}} (z\by, z\bu) {\overset{\Psi}{\longrightarrow}}  (z\by, u\by) {\overset{\Phi}{\longrightarrow}}  (u, u\by) {\overset{\Psi}{\longrightarrow}} (u,y) .
\eeq
For each such pair $(u',y')$, the corresponding specialisation
of~\eqref{eq-BY-uy} reads
$$
 ( {u'}/ {y'})\, K(u,y)\gBT(1/(1-u'),y')=u'y'- R(u')-S(y').
$$
 We form the alternating sum of the 6 equations of this form obtained from the
 pairs~\eqref{6id}. The series $R(\cdot)$ and  $S(\cdot)$ cancel out,
 and, after dividing by $K(u,y)$,  we obtain:
\begin{multline*}
u\by \gBT\left  ( \frac 1{1-u},y \right)
-z\bu \gBT\left ( \frac 1{1-yz\bu}, y\right)
+y\gBT\left ( \frac 1{1-yz\bu}, z\bu\right)
\\-u\by\gBT\left ( \frac 1{1-z\by}, z\bu\right)
+z\bu\gBT\left ( \frac 1{1-z\by}, u\by\right)
-y\gBT\left ( \frac 1{1-u}, u\by\right)
 \\
=
\frac{uy-y^2z\bu+ yz^2\bu^2- z^2\by\bu+zu\by^2-u^2\by}{1-u- z (y\bu +\by   )}
.
\end{multline*}
The above identity holds in the ring of \fps\ in $z$ with
coefficients in $\Q(u,y)$, which we consider as a sub-ring of
Laurent series in $u$ and $y$.
Recall that  $\gBT(x,y)$ has coefficients in $xy^2\Q[x,y]$. Hence, in
the left-hand side of this identity, the terms with positive exponents
in $u$ and $y$ are exactly those of $u\by \gBT\left  ( \frac 1{1-u},y
\right)$.
It follows that  the latter series is the positive part (in $u$
and $y$) of the rational function
$$
R(z;u,y):=\frac{uy-y^2z\bu+ yz^2\bu^2- z^2\by\bu+zu\by^2-u^2\by}{1-u-
  z (y\bu +\by   )}.
$$
It remains to perform a coefficient extraction. One first finds:
$$
\frac{1}{1-u-z(\by+y\bu)}= \sum_{n\ge 0} \sum_{a\ge -n}\sum_{b=-n}^n 
z^n u^a y^b 
{n\choose {\frac{b+n}2}} {{\frac{b+n}2+n+a}\choose n}
$$
where the sum is restricted to triples $(n,a,b)$ such that $n+b$ is even. 
An elementary calculation then yields the expansion of $R(z;u,y)$, and finally
\beq\label{gBT-sol}
\gBT\left(z;\frac 1{1-u},y\right)=   
\sum_{n\ge 0} \sum_{i\ge 0}\sum_{j=2}^{n+2} 
z^n u^i y^j \frac{(i+1)(j-1)(i+j) \left( \frac{3n+j}2 +i-1\right)!}
{\left(\frac{n-j}2+1\right)! \left( \frac{n+j}2 +i+1\right)! 
\left( \frac{n+j}2\right)!},
\eeq
where the sum is restricted to triples $(n,i,j)$ such that $n+j$ is
even.
In particular, the case $u=0$ shows that the number of bipolar
orientations of near-triangulations having $n$ finite faces and a
root-face of degree $j$ is 
$$
 \frac{(j-1)j \left( \frac{3n+j}2 -1\right)!}
{\left(\frac{n-j}2+1\right)! \left( \frac{n+j}2 +1\right)! 
\left( \frac{n+j}2\right)!},
$$
which coincides with~\eqref{bip-sol-T}, given that the number of vertices of
such maps is $1+(n+j)/2$.

The first formula of the proposition is then obtained by summing over
$j$. The third one is easily verified using~\eqref{gBT-sol}. However,
it can also be \emm derived, from~\eqref{gBT-sol} if one prefers a 
constructive proof. One proceeds as follows. First, observe that if a
rational function $R(u)$ is of the form $P(1/(1-u))$, for
some Laurent polynomial $P$, then $P(x)$ coincides with
the  expansion of 
$R(1-\bx)$ as a Laurent series in $\bx$. In particular, if $P(x)\in
x\Q[x]$, then  $P(x)$ is the
positive part in $x$ of the expansion of $R(1-\bx)$ in $\bx$.  The coefficient of $z^n
y^j$ in $\gBT(z;x,y)$ is precisely in $x\Q[x]$, so that we can apply
this extraction procedure to the right-hand side of~\eqref{gBT-sol}. 
We first express the  coefficient of $z^n y^j$ as a rational function
of $u$, using
$$
\sum_{i\ge 0} u^i {{a+b+i}\choose a}= 
\frac 1{u^b (1-u)^{a+1}} -\sum_{j=0}^{b-1} \frac 1 {u^{b-j}}{{a+j}\choose a}.
$$
Then, we set $u=1-\bx$, expand  in $\bx$ this rational function, and
extract the positive part in $x$. For the above series, this gives:
\begin{eqnarray*}
  [x^>]\sum_{i\ge 0} u^i {{a+b+i}\choose a}
&=&
[x^>]\frac{x^{a+1}}{(1-\bx )^b} -\sum_{j=0}^{b-1}
    \frac 1 {(1-\bx)^{b-j}}{{a+j}\choose a}
\\
&=&
[x^>]\frac{x^{a+1}}{(1-\bx )^b}\\
&=&\sum_{k= 0}^a x^{a+1-k} {{k+b-1}\choose k}.
\end{eqnarray*}
Combining these two ingredients yields the third formula of the
proposition.

Finally, the form
of  these three formulas, together with the closure properties of
D-finite series~\cite{lipshitz-diag,lipshitz-df}, imply that the associated \gfs\ are D-finite.
\end{proof}

\medskip
The same method allows us to solve the linear equation~\eqref{eq:bip-M}  obtained
for bipolar orientations of general maps. One sets $x=1+u$ and
$y=1+v$. It is also convenient to write
$$
\gB(x,y)= xy^2tw+ x^2y^2t^2w\, G(x,y).
$$
The equation satisfied by $G$ reads
\begin{multline*}
  uv \left(1-t (1+\bu)(1+\bv)(u+vw)\right) G(1+u,1+v)=\\
uv-tu(1+u)G(1+u,1)-twv(1+v)G(1,1+v).
\end{multline*}
The relevant transformations $\Phi$ and $\Psi$ are now
$$
\Phi: (u,v) \mapsto (\bu w v,v) \quad \hbox{and} \quad
\Psi: (u,v) \mapsto (u,u \bv \bw).
$$
Again, they generate a group of order 6. 
 One finally obtains that $G(1+u,1+v)$ is the non-negative part
(in $u$ and $v$) of the following rational function:
$$
\frac{(1-\bu\bv)(u\bv-w\bu)(\bu v-\bv\bw)}
{1-t (1+\bu)(1+\bv)(u+vw)}.
$$
A coefficient extraction, combined with Lemma~6 of~\cite{mbm-motifs},
yields the following results.
\begin{prop}\label{prop:bip}
For $1\le m <n$,  the number of bipolar orientations of  planar maps   having $n$
 edges and $m+1$ vertices
  is
$$
\frac 2{(n-1)n^2}   {{n} \choose {m-1}}{{n} \choose {m}}
{{n} \choose {m+1}}.
$$
For  $1\le m <n$ and $2\le j\le m+1$,  the number of bipolar orientations of  planar maps   having $n$ edges, $m+1$ vertices
and  a root-face of degree $j$  is
$$
 \frac{ j(j-1)}{(n-1)n^2}  {{n}\choose {m}}{{n}\choose {m+1}}{{n-j-1}\choose {m-j+1}}.
$$
For $n\ge 3$, $1\le m <n$, $2\le i \le n-m+1$ and $2\le j\le m+1$,
the number of bipolar orientations of  planar maps   having $n$
edges, $m+1$ vertices, 
a root-vertex of degree $i$ and a root-face of degree $j$  is
$$
  \frac{(i-1)(j-1)}{(n-1)n} {n \choose m}  
\left[  {n-j-1 \choose n-m-2}{ n-i-1 \choose m-2}-
{n-j-1 \choose n-m-1}{ n-i-1 \choose {m-1}}\right].
$$
The associated \gfs\ are D-finite.
\end{prop}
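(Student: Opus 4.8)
The plan is to mirror the treatment of bipolar orientations of near-triangulations (proof of Proposition~\ref{prop:bip-triang}), now starting from the linear equation~\eqref{eq:bip-M}. First I would perform the changes of variable $x=1+u$, $y=1+v$ together with the substitution
$$
\gB(x,y)=xy^2tw+x^2y^2t^2w\,G(x,y),
$$
checking that it is legitimate: the term $xy^2tw$ is the contribution of the unique one-edge map carrying a bipolar orientation (root-vertex of degree $1$, root-face of degree $2$), while a non-separable map with at least two edges has a root-vertex and all faces of degree at least $2$ and at least one non-root vertex, so $\gB(x,y)-xy^2tw$ is divisible by $x^2y^2t^2w$; hence $G(x,y)$ is a well-defined power series in $t$ with coefficients in $\Q[x,y,w]$, and $G(1+u,1+v)$ has coefficients in $\Q[u,v,w]$. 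A direct computation then rewrites~\eqref{eq:bip-M} as
\begin{multline*}
uv\bigl(1-t(1+\bu)(1+\bv)(u+vw)\bigr)G(1+u,1+v)=\\
uv-tu(1+u)G(1+u,1)-twv(1+v)G(1,1+v),
\end{multline*}
whose kernel $1-t(1+\bu)(1+\bv)(u+vw)$ is the object to exploit.

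Next I would verify that $\Phi\colon(u,v)\mapsto(\bu wv,v)$ and $\Psi\colon(u,v)\mapsto(u,u\bv\bw)$ are kernel-preserving involutions and that $\langle\Phi,\Psi\rangle$ has order $6$ (so $\Phi\Psi$ has order $3$). Letting the group act on $(u,v)$ produces a hexagonal orbit; forming the alternating sum of the six corresponding specialisations of the equation eliminates the one-variable unknowns, since $\Phi$ fixes the coordinate $v$ (so the consecutive contributions of $G(1,1+v)$ cancel) and $\Psi$ fixes the coordinate $u$ (so the consecutive contributions of $G(1+u,1)$ cancel) --- this is the algebraic version of the kernel method~\cite{mbm-kreweras,mbm-mishna}. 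After dividing by the kernel, exactly one of the six remaining terms, the one involving $G(1+u,1+v)$ itself, has non-negative exponents in both $u$ and $v$ (the other five are genuine Laurent terms), because the coefficients of $G(1+u,1+v)$ lie in $\Q[u,v,w]$. Extracting the non-negative part in $u$ and $v$ therefore identifies $G(1+u,1+v)$ with the non-negative part (in $u$ and $v$) of
$$
\frac{(1-\bu\bv)(u\bv-w\bu)(\bu v-\bv\bw)}{1-t(1+\bu)(1+\bv)(u+vw)},
$$
as the excerpt records.

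The final, most substantial step is the coefficient extraction. Expanding the denominator $1-t(1+\bu)(1+\bv)(u+vw)$ as a geometric series in $t$ turns each power of $t$ into a Laurent polynomial in $u,v,w$ whose monomials carry multinomial coefficients; multiplying by the numerator and selecting the monomials with non-negative exponents in both $u$ and $v$ is carried out with Lemma~6 of~\cite{mbm-motifs}, which is designed precisely for this kind of positive-part extraction. Reading off the coefficient of $t^nw^mx^iy^j$ gives the finest of the three formulas; setting $x=1$, i.e.\ $u=0$, collapses it to the formula refined by root-face degree alone, and setting $y=1$ as well --- or summing the previous formula over $j$ --- gives the plain count by edges and vertices. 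Each of the three expressions is a proper hypergeometric term summed over a finite range, so the corresponding one-, two- and three-variable generating functions are D-finite, by the closure properties of D-finite series~\cite{lipshitz-diag,lipshitz-df}; alternatively, D-finiteness is immediate because the non-negative part of a rational function, together with its specialisations, is D-finite.

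I expect the main obstacle to lie entirely in this last extraction: matching the non-negative part of the explicit rational function with the precise products of binomial coefficients in the statement --- in particular recovering the denominators $(n-1)n^2$ and $(n-1)n$, and, in the three-variable case, the signed difference of two products of binomials --- requires careful bookkeeping, and it is there that the apparatus of~\cite{mbm-motifs} does the real work. By contrast, checking that $\langle\Phi,\Psi\rangle$ has order $6$ and that the hexagonal orbit sum cleanly removes $G(1+u,1)$ and $G(1,1+v)$ is routine.
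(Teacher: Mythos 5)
Your proposal follows the paper's own (sketched) argument essentially step for step: the same substitution $\gB(x,y)=xy^2tw+x^2y^2t^2w\,G(x,y)$ with $x=1+u$, $y=1+v$, the same kernel-preserving involutions generating a group of order $6$, the same alternating orbit sum eliminating $G(1+u,1)$ and $G(1,1+v)$, identification of $G(1+u,1+v)$ as the non-negative part of the same rational function, and the final coefficient extraction via Lemma~6 of~\cite{mbm-motifs} together with the closure properties giving D-finiteness. This is correct and is exactly the route the paper takes (which it itself describes as close to~\cite[Section~2]{mbm-motifs}).
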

The solution we have sketched is very close
to~\cite[Section~2]{mbm-motifs}. Eq.~\eqref{eq:bip-M} was also solved
independently by Baxter~\cite{baxter-dichromatic}, but his solution
involved some guessing, while the one we have presented here is constructive.

\subsection{A quasi-linear case: spanning trees}
\label{sec:trees}
When $\mu=\nu=1$, the Tutte polynomial $\Tpol_G(\mu,\nu)$ gives
 the number of spanning trees of $G$. The equations~\eqref{eq:tM}
and~\eqref{eq:Tutte-triang} that define the
Tutte \gfs\ of our main two families of planar maps turn out to
be much easier to solve in this case.

Consider first general planar maps, and Tutte's
equation~\eqref{eq:tM}. We replace $w$ by $wt$ and $z$ by $zt$ so that
$t$ keeps track of the edge number. When $\mu=\nu=1$, the equation reads:
\begin{multline}\label{eq:tM-11}
 \left(1-\frac{x^2ywt}{x-1}- \frac{xy^2zt}{y-1}-xyzt(x-1)\gtM(x,1)-xywt(y-1)\gtM(1,y) \right)  \gtM(x,y)= \\1 
-\frac{xyzt}{y-1}\gtM(x,1)-\frac{xywt}{x-1}\gtM(1,y).
\end{multline}
Observe that, up to a factor $(x-1)(y-1)$, the \emm same, linear combination of $\gtM(x,1)$ and
$\gtM(1,y)$ appears on the right- and left-hand sides. This
property was observed, but not fully exploited,  by
Tutte~\cite{tutte-dichromatic-revisited}. Bernardi~\cite{bernardi-pc} showed that it allows us to solve~\eqref{eq:tM-11} using the
standard kernel method usually applied to \emm linear, equations with two
catalytic variables~\cite{mbm-motifs}. We 
thus obtain a new proof of the following result, due to Mullin~\cite{mullin-boisees}.
 Using Mullin's
 terminology, we say that a map equipped with a
 distinguished spanning tree is \emm tree-rooted,.

\begin{prop}\label{prop:tree-rooted}
 The number of tree-rooted planar maps with $n$ edges is
$$
\frac{(2n)!(2n+2)!}{n!(n+1)!^2(n+2)!}.
$$  
The number of  tree-rooted planar maps with $i+1$ vertices and $j+1$ faces
  is
$$
\frac{(2i+2j)!}{i!(i+1)!j!(j+1)!}.
$$
The associated \gfs\ are D-finite.
\end{prop}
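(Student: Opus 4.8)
The plan is to solve the functional equation~\eqref{eq:tM-11} by the algebraic kernel method, exploiting the observation --- due to Tutte and fully used by Bernardi --- that the linear combination $L:=\frac{xyzt}{y-1}\gtM(x,1)+\frac{xywt}{x-1}\gtM(1,y)$ of the two unknown one-variable series occurs on \emph{both} sides. Indeed, rewriting~\eqref{eq:tM-11} as
\[
K(x,y)\,\gtM(x,y)=1-L-(x-1)(y-1)\,xyzt\,\gtM(x,1)\gtM(1,y)+\text{(kernel terms)},
\]
one sees that after collecting terms the dependence on $\gtM(x,1)$ and $\gtM(1,y)$ is, up to the explicit prefactor $(x-1)(y-1)$, through the single series $L$ together with the product $\gtM(x,1)\gtM(1,y)$. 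First I would clear denominators and identify the kernel $K(x,y)$; the curves $K=0$ admit a pair of involutive birational symmetries $\Phi$ and $\Psi$ (one fixing $y$ and one fixing $x$, as in the proofs of Propositions~\ref{prop:bip-triang} and~\ref{prop:bip}), which here should generate a finite group --- again of order $6$ --- acting on the orbit of $(x,y)$.

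Next I would specialise~\eqref{eq:tM-11} along the orbit of the group, i.e. substitute the (at most six) pairs $(x',y')$ on which $K$ takes a common value, and form a suitable alternating combination. Because the troublesome unknowns enter only through $L$ and the \emph{symmetric} product $\gtM(x,1)\gtM(1,y)$, the right alternating sum makes these contributions telescope, leaving an expression for $\gtM(x,y)$ --- or, more precisely, for a specialisation such as $\gtM(x,1)$ or the full series divided by the kernel --- as the positive (or non-negative) part, in suitable variables, of an explicit rational function of $x$, $y$, $t$, $w$, $z$. Setting $w=z=1$ at the appropriate point specialises to the plain count by edges; keeping $w$ and $z$ gives the refinement by vertices and faces. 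This is exactly the mechanism of Section~2 of~\cite{mbm-motifs}, and Bernardi's observation is precisely what lets it go through despite the equation being a priori quadratic rather than linear.

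The final step is a coefficient extraction. From the closed rational function one expands $\frac{1}{1-t(\cdots)}$ as a multiple sum of monomials with binomial coefficients --- just as in the proof of Proposition~\ref{prop:bip-triang}, where $\frac{1}{1-u-z(\by+y\bu)}$ was expanded explicitly --- then extracts the relevant positive part. For the bivariate refinement one invokes a Vandermonde-type evaluation (Lemma~6 of~\cite{mbm-motifs}) to collapse the resulting sum to the single product $\frac{(2i+2j)!}{i!(i+1)!j!(j+1)!}$; summing this over the diagonal $i+j=n$ (another Vandermonde-style identity) yields $\frac{(2n)!(2n+2)!}{n!(n+1)!^2(n+2)!}$, recovering Mullin's formula~\cite{mullin-boisees}. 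D-finiteness of the associated generating functions is then immediate: a series whose coefficients are products of binomials and factorials of linear forms in the indices is D-finite, by the standard closure properties~\cite{lipshitz-diag,lipshitz-df}.

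The main obstacle I anticipate is verifying that the kernel symmetry group really is finite (and computing its orbit explicitly), and --- once the orbit sum is formed --- confirming that the \emph{quadratic} term $(x-1)(y-1)xyzt\,\gtM(x,1)\gtM(1,y)$ genuinely cancels rather than merely simplifying: this is the delicate point that distinguishes the present ``quasi-linear'' situation from the genuinely linear cases of Section~\ref{sec:bip}, and it is where the precise form of the prefactor $(x-1)(y-1)$, matching on both sides of~\eqref{eq:tM-11}, is indispensable. The subsequent coefficient extraction and the Vandermonde summations are routine but bookkeeping-heavy, and I would not grind through them here.
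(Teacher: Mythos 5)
Your plan stalls exactly at the point you flag at the end: the claim that ``the right alternating sum makes these contributions telescope'' is the step that fails, and it is the whole difficulty. An orbit-wise alternating sum cancels unknowns that appear \emph{linearly} and depend on only one of the two variables; it cannot dispose of the product $\gtM(x,1)\gtM(1,y)$. Under the symmetry that moves $x$ only the first factor changes, and under the one that moves $y$ only the second does, so the alternating combination leaves you with expressions of the type $\bigl(\gtM(x,1)-\gtM(x',1)\bigr)\bigl(\gtM(1,y)-\gtM(1,y')\bigr)$, which are still unknown --- no choice of signs kills them. There is also a prior difficulty: in \eqref{eq:tM-11} the coefficient of $\gtM(x,y)$ itself contains $\gtM(x,1)$ and $\gtM(1,y)$, so there is no a priori ``kernel'' whose symmetry group you can act with; and for the kernel that does eventually emerge the relevant symmetries are two \emph{commuting} involutions (one in each variable), giving a group of order $4$, not the order-$6$ group of the bipolar-orientation examples.

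What actually makes the argument work is a factorisation, not a cancellation. After setting $S(u,v)=\frac 1{(1-ut)(1-vt)}\gtM\bigl(w,z,t^2;\frac 1{1-ut},\frac 1{1-vt}\bigr)$, the repeated linear combination you correctly identified lets \eqref{eq:tM-11} be rewritten as
\begin{equation*}
\left(1-t(u+v+w\bu+z\bv)\right) S(u,v)=\left(1-uvt^2S(u,v)\right)\left(1-tz\bv S(u,0)-tw\bu S(0,v)\right),
\end{equation*}
so the two-variable unknown sits in an \emph{invertible} factor $1-uvt^2S(u,v)$. Substituting the unique power-series root $V$ of the kernel (in $v$), the left-hand side vanishes and invertibility forces the \emph{linear} factor to vanish: $tzu\,S(u,0)+twV\,S(0,V)=uV$. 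One application of the single involution $u\mapsto w\bu$ and a subtraction then eliminate $S(0,V)$, showing that $tzu\,S(u,0)$ is the positive part in $u$ of $(u-w\bu)V$. Note that $V$ is algebraic, satisfying $V=t\left(z+(u+w\bu)V+V^2\right)$, so the endgame is Lagrange inversion applied to this equation (and to $V=t(1+uV)(1+\bu V)$ when $w=z=1$), not the expansion of a rational function followed by a Vandermonde-type lemma. With the factorisation step supplied and the extraction redone via Lagrange inversion, the rest of your outline (specialising $w,z$, summing over $i+j=n$, and D-finiteness from closure properties) goes through.
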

\begin{proof}
  Set
$$
S(u,v)\equiv S(w,z,t;u,v)= \frac 1{(1-ut)(1-vt)}\gtM\left(
w,z,t^2;\frac 1 {1-ut}, \frac 1{1-vt}\right).
$$
Eq.~\eqref{eq:tM-11} can be rewritten as
\beq\label{eq:S-planar}
\left(1-t(u+v+w\bu+z\bv)\right) S(u,v)= \left(1-uvt^2S(u,v)\right)
\left(1-tz\bv S(u,0)-tw\bu S(0,v)\right).
\eeq
Observe that the (Laurent) polynomial
$\left(1-t(u+v+w\bu+z\bv)\right)$ is invariant by the transformation
$u\mapsto w\bu$. Seen as a  polynomial in $v$, it has two
roots. Exactly one of them, denoted 
$V\equiv V(w,z,t;u)$ is a \fps\ in $t$ with coefficients in
$\Q[w,z,u,\bu]$, satisfying 
\beq\label{V-eq}
V= t\left(z+(u+w\bu) V+V^2\right).
\eeq
In~\eqref{eq:S-planar}, specialise $v$ to $V$. The left-hand side
vanishes, and hence the right-hand side vanishes as well. Since its
first factor is not zero, there holds
$$
tzu S(u,0)+twV S(0,V)=uV.
$$
Now replace $u$ by $w\bu$ in~\eqref{eq:S-planar}, and specialise again
$v$ to $V$. This gives
$$
tzw\bu S(w\bu,0)+twV S(0,V)=w\bu V.
$$
By taking the difference of the last two equations, we obtain
\beq\label{trees-diff}
tzu S(u,0)-tzw\bu S(w\bu,0)= (u-w\bu) V.
\eeq
Since $S(u,0)$ is a series in $t$ with coefficients in $\Q[w,z,u]$,
this equation implies that $tzuS(u,0)$ is the positive part in $u$ of
$(u-w\bu) V$. The number $\TR(i,j)$ of tree-rooted planar maps having $i+1$
vertices and $j+1$ faces is the coefficient of $w^iz^j t^{i+j}$ in
$\gtM(w,z,t;1,1)$, that is, the coefficient of $w^iz^{j}
t^{2i+2j}$ in $S(u,0)$, or the coefficient of $w^iz^{j+1} t^{2i+2j+1}u$
in $tzuS(u,0)$. The Lagrange inversion formula, applied
to~\eqref{V-eq}, yields
\beq\label{lagrange-trees}
[w^iz^jt^nu^{n+1-2i-2j}]V= \frac{(n-1)!}{i!(j-1)!j!(n+1-i-2j)!}.
\eeq
Hence
\begin{eqnarray*}
  \TR(i,j)&=& [w^iz^{j+1} t^{2i+2j+1}u] \left(tzuS(u,0)\right)
\\
&=& [w^i z^{j+1} t^{2i+2j+1} u^0] V - [w^{i-1} z^{j+1} t^{2i+2j+1}
  u^2]V
\hskip 10mm \hbox{(by~\eqref{trees-diff})}
,
\end{eqnarray*}
which, thanks to~\eqref{lagrange-trees}, gives the second result of the proposition. The first one
follows by summing over all pairs $(i,j)$ such that
$i+j=n$. Alternatively, one can apply the Lagrange inversion formula
to the equation satisfied by $V$ when $w=z=1$, which is
$V=t(1+uV)(1+\bu V)$. 
\end{proof}

Similarly, we can derive from the functional equation~\eqref{eq:Tutte-triang} defining the
Tutte-\gf\ of quasi-triangulations the number of tree-rooted
near-triangulations having a fixed outer degree and number of
vertices. This result is also due to Mullin~\cite{mullin-boisees}, but
the proof is new.

\begin{prop}\label{prop:tree-rooted-triang}
  The number of tree-rooted near-triangulations having $i+1$ vertices and a
  root-face of degree $d$ is
$$
\frac d{(i+1)(4i-d)}{{3i-d}\choose i} {{4i-d} \choose i} .
$$
The associated \gf\ is D-finite.
\end{prop}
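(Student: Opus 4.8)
The plan is to specialise the functional equation~\eqref{eq:Tutte-triang} at $\mu=\nu=1$ --- where $\Tpol_T(\mu,\nu)$ counts the spanning trees of $T$ --- and then to solve the resulting equation by the very kernel-method strategy used to prove Proposition~\ref{prop:tree-rooted}. At $\nu=1$ the term carrying $(\nu-1)$ disappears, so $\gtQ_2$ drops out and the only unknown boundary series left are $\gtQ_1(x)=[y]\gtQ(x,y)$ and $\gtQ(0,y)$; moreover $\mu+\tfrac{tx\nu z}{1-x\nu tz}=\tfrac1{1-xtz}$. Collecting the terms that carry $\gtQ(x,y)$, one checks that the coefficient of $\gtQ_1(x)$ (resp.\ of $\gtQ(0,y)$) on the left-hand side is $xy$ times its coefficient on the right-hand side --- the analogue of the fact that the \emph{same} combination of $\gtM(x,1)$ and $\gtM(1,y)$ occurs on both sides of~\eqref{eq:tM-11}. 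This is the ``quasi-linear'' feature, and, after a change of variables $x\mapsto$ (new variable) chosen to absorb the denominators $x$ and $1-xtz$ (and the inhomogeneous term $1-tz/y-xtz$), the equation should take the factored shape
\[
\bigl(\text{Laurent-polynomial kernel}\bigr)\,S(u,y)=\bigl(1-(\cdots)\,S(u,y)\bigr)\bigl(1-\text{linear combination of an }S(u,0)\text{-type and an }S_1(u)\text{-type series}\bigr)
\]
that mirrors the passage from~\eqref{eq:tM-11} to~\eqref{eq:S-planar}.

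From here I would follow the proof of Proposition~\ref{prop:tree-rooted} step by step. The kernel, read as a polynomial in one catalytic variable, has a root $V$ that is a formal power series in $t$ with coefficients polynomial in the remaining variables and that satisfies a ``tree-like'' equation $V=t\times(\text{explicit polynomial in }V)$; it is moreover invariant under a rational involution of the catalytic variables (the counterpart of $u\mapsto w\bu$ in~\eqref{eq:S-planar}), which fixes $V$. Substituting the catalytic variable equal to $V$ annihilates the kernel; since the companion factor $1-(\cdots)S$ has constant term $1$ and is therefore invertible in the power-series ring, the other factor must vanish, which yields one relation between $\gtQ(0,V)$ and $\gtQ_1$ evaluated at the two points exchanged by the involution. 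Taking the difference of this relation and its image under the involution cancels $\gtQ(0,V)$ and expresses $\gtQ_1$ (and, undoing the change of variables and re-substituting, enough of $\gtQ$) as the positive part, in the new variable, of an explicit algebraic function --- the analogue of $tzuS(u,0)-tzw\bu S(w\bu,0)=(u-w\bu)V$. Using~\eqref{autres-param} (so that $\ee(T)=3i-d$ and $\ff(T)-1=2i-d$ when $\vv(T)=i+1$ and $\df(T)=d$), I would then read off the relevant monomial, which records the number of tree-rooted near-triangulations sought.

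The last step is a coefficient extraction: expand the algebraic function, apply the Lagrange inversion formula to the equation defining $V$ to write the coefficients of $V$ as a short product of factorials (as in~\eqref{lagrange-trees}), and simplify the resulting binomial sum down to $\frac{d}{(i+1)(4i-d)}{{3i-d}\choose i}{{4i-d}\choose i}$. D-finiteness of the generating function is then automatic: a positive part, in one variable, of an algebraic series is D-finite, and D-finiteness is preserved under the specialisations used~\cite{lipshitz-diag,lipshitz-df}.

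I expect the difficulty to be computational rather than conceptual: the scheme is dictated by the analogy with Proposition~\ref{prop:tree-rooted}, and the delicate points are (i) pinning down the change of variables that simultaneously rationalises the kernel, keeps it a Laurent polynomial, and makes its invariance group finite --- the extra factor $\tfrac1{1-xtz}$ and the inhomogeneous term make the bookkeeping heavier than for the clean equation~\eqref{eq:S-planar}, and the kernel will have higher degree in the catalytic variable --- and (ii) the final binomial-sum simplification, which, as the excerpt already warns for related eliminations, tends to be heavy.
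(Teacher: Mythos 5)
Your overall scheme is indeed the paper's: specialise~\eqref{eq:Tutte-triang} at $\mu=\nu=1$ (and $z=1$), use the quasi-linear structure to factor the equation into the form~\eqref{Q-S} with the factor $\left(1-xy\gtQ(x,y)\right)$, rationalise via $u=x(1-xt)$ so that the kernel becomes the Laurent polynomial $1-t\by-uy-t\bu y$, kill the kernel at two related points so that one unknown series cancels, extract a positive part, and finish with Lagrange inversion. The gap is in the orientation of the cancellation, and it is not cosmetic: the statistic $d$ of the proposition (the root-face degree) is carried by the exponent of $y$ in $\gtQ(0,y)$, so the formula must come from an explicit expression for $\gtQ(0,y)$. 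In the paper's proof the root $U$ is taken in the variable $u$ (it satisfies $U=t\,\frac{y+U\by}{1-Uy}$, i.e.~\eqref{U-eq}); the kernel is killed once at $(u,y)=(U,y)$ and once at $(u,y)=(U,\,t/(1-t\by))$, the image of $y$ under the $y$-involution, which becomes $u$-free precisely because of the kernel relation. The unknown attached to the $x$-variable, namely $R_1(X(U))$ with $R_1(x)=x+\gtQ_1(x)$, is then common to both relations and cancels, leaving $ty\,\gtQ(0,y)-\frac{t^2}{1-t\by}\,\gtQ\!\left(0,\frac{t}{1-t\by}\right)=U(1-2t\by)$, so that $ty\,\gtQ(0,y)$ is the positive part in $y$ of $U(1-2t\by)$, and Lagrange inversion gives the binomial formula.

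You propose the mirror image: take the root in $y$, use the involution fixing it on the $u$-side, cancel $\gtQ(0,\cdot)$ evaluated at the root, and extract $\gtQ_1$ as a positive part. That computation can be carried out, but it produces the \gf\ of quasi-triangulations of outer degree one counted by root-vertex degree --- a series from which the root-face degree $d$ has disappeared, so the ``relevant monomial'' you plan to read off via~\eqref{autres-param} is simply not present in it. Recovering $\gtQ(0,y)$ from a known $\gtQ_1$ means going back to the kernel-root relation and re-parametrising, i.e.\ essentially performing the other specialisation anyway; your phrase ``undoing the change of variables and re-substituting, enough of $\gtQ$'' conceals exactly this missing step. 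The fix is just to swap the roles, as above. A smaller inaccuracy worth noting: here the relevant involution acts on $y$ and depends on $u$, namely $y\mapsto tu\by/(t+u^2)$, and only after evaluating at $u=U$ does it become the clean substitution $y\mapsto t/(1-t\by)$; so the mechanism is a variation on, rather than a literal copy of, the ``root fixed by $u\mapsto w\bu$'' situation in the proof of Proposition~\ref{prop:tree-rooted}.
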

\begin{proof}
  We specialise to $\mu=\nu=z=1$ the equation~\eqref{eq:Tutte-triang} that defines the
  Tutte-\gf\ of quasi-triangulations. We then replace $\gtQ_2(x)$ by its expression in terms of $\gtQ_1$.
Again, the same linear combination of $\gtQ_1(x)$ and $\gtQ(0,y)$ occurs  in
the right- and left-hand sides, and the equation can be rewritten as
\begin{multline}\label{Q-S}
  \left( 1-t\by -xy(1-tx)- \frac{ty}{x(1-tx)}\right)\gtQ(x,y)
=\\
\left( 1-t\by -tR_1(x)- \frac{ty}{x(1-tx)} \gtQ(0,y)\right)
\left(1-xy\gtQ(x,y)\right)
\end{multline}
where $R_1(x)=x+\gtQ_1(x)$. Let us denote $u:=x(1-xt)$. Equivalently, we introduce a new indeterminate $u$
and set
$$
x= X(u):=\frac{1-\sqrt{1-4ut}}{2t}.
$$
The (Laurent) polynomial $\left(1-t\by -uy- {t\bu y}\right)$ occurring in the
left-hand side of~\eqref{Q-S} is invariant by the
transformation $y \mapsto tu\by /(t+u^2)$. As a polynomial in $u$, it has two roots. One
of them is a power series in $t$ with constant term $0$, satisfying
\beq\label{U-eq}
U= t\,\frac{y+U\by}{1-Uy}.
\eeq
In~\eqref{Q-S}, specialise $x$ to $X(U)$. The left-hand side vanishes,
leaving
$$
tU R_1(X(U))+ {ty} \gtQ(0,y)=U(1-t\by).
$$
If we first replace $y$ by $t\by U/(t+U^2)=t/(1-t\by)$ in~\eqref{Q-S} before
specialising $x$ to $X(U)$, we obtain instead
$$
  tU R_1(X(U))+ \frac{t^2}{1-t \by}\
  \gtQ\left(0,\frac{t}{1-t\by}\right)=t\by U.
$$
By taking the difference of the last two equations, one finds:
\begin{eqnarray*}
{ty} \gtQ(0,y)- \frac{t^2}{1-t\by}\
\gtQ\left(0,\frac{t}{1-t\by}\right)
&=& U(1-2t\by). 
\end{eqnarray*}
Since $\gtQ(0,y)$ is a series in $t$ with coefficients in $\Q[y]$,
this equation implies that ${ty} \gtQ(0,y)$ is the positive part in $y$ of
$U(1-2t\by)$.  The Lagrange inversion formula, applied
to~\eqref{U-eq}, gives:
$$
[t^ny^{3i-n+2}] U=\frac 1 n {n\choose {i+1}}{{n+i-1}\choose i}.
$$
This yields
\begin{eqnarray*}
[t^n y^{3i-n}] \gtQ(0,y)&=&\frac 1 {n+1} {{n+1}\choose {i+1}}{{n+i}\choose i}
-\frac 2 n {n\choose {i+1}}{{n+i-1}\choose i}\\
&=&
\frac{3i-n}{(i+1)(n+i)}{n\choose i}{n+i\choose i},
\end{eqnarray*}
which is equivalent to the proposition, as a near-triangulation with
$n$ edges and outer degree $3i-n$ has $i+1$ vertices.
 \end{proof}

\subsection{When $q$ is a Beraha number: Algebraicity}
\label{sec:beraha}
We now report on more difficult results obtained recently by Olivier
Bernardi and the author~\cite{bernardi-mbm} by following and adapting
Tutte's enumeration 
of properly $q$-coloured triangulations~\cite{tutte-chromatic-revisited}.

For certain values of $q$, it is possible to derive from the equation
with two catalytic variables defining $M(x,y)$ an equation with a
single catalytic variable (namely, $y$) satisfied by $M(1,y)$. For instance, one can derive from
the case $q=1$ of~\eqref{eq:M} that $M(y)\equiv M(1,y)$ satisfies
$$
M(y)= 1+ y^2 t\nu w  M(y)^2 + \nu t y \, \frac{yM(y)-M(1)}{y-1}.
$$
This is only a moderately exciting result, as the latter equation is
just the standard functional equation~\eqref{1cat-planaires1} obtained by
deleting recursively  the root-edge in planar
maps.  

But let us be persistent.  When $q=2$, one can derive
from~\eqref{eq:M} that $M(y)\equiv M(1,y)$ satisfies a 
polynomial equation with one catalytic variable, involving two
additional unknown series, namely
$M(1)$ and $M'(1)$. This equation  is rather big
(see~\cite[Section~12]{bernardi-mbm}), and we do not write it here. No
combinatorial way to derive it is known at the moment. 
When $\nu=0$, the series $M'(1)$ disappears, and one recovers the
standard equation~\eqref{eq:bip} obtained by
deleting recursively  the root-edge in  bipartite planar maps.

This construction works as soon as $q\not=0,4$ is of the form $2+2\cos
(j\pi/m)$, for integers $j$ and $m$.  These numbers
generalise  \emm Beraha's numbers, (obtained for $j=2$), 
which occur frequently in connection with  chromatic properties
 of planar
 graphs~\cite{beraha-kahane-weiss,fendley-chromatic,jacobsen-richard-salas,jacobsen-salas,martin,saleur}. They
 include the three integer values $q=1, 2, 3$. Given
 that the solutions of polynomial equations with one catalytic
 variable are always algebraic (Theorem~\ref{generic-thm}), the following algebraicity
 result holds~\cite{bernardi-mbm}.

\begin{theorem}\label{thm:alg-planaires}
 Let $q\not= 0,4$ be of the form
$2 +2 \cos j \pi/m$ for two integers $j$ and $m$.
Then the series $M( q, \nu, t, w; x,y)$, defined by~\eqref{eq:M}, 
is algebraic over $\Q(q, \nu,t,w,x,y)$. 
\end{theorem}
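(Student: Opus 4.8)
The plan is to reduce the statement to Theorem~\ref{generic-thm} (algebraicity of solutions of polynomial equations with one catalytic variable) in three moves: (A) produce, for the values of $q$ at hand, a polynomial equation with the \emph{single} catalytic variable $y$ satisfied by the boundary series $M(1,y):=M(q,\nu,t,w;1,y)$; (B) deduce algebraicity of the other boundary series $M(x,1)$ from~(A) by planar duality; and (C) recover $M(x,y)$ itself from $M(1,y)$ and $M(x,1)$ by a purely rational manipulation of~\eqref{eq:M}.

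I would dispatch (C) first, as it is the easy end. Equation~\eqref{eq:M} is \emph{linear} in $M(x,y)$ once $M(1,y)$ and $M(x,1)$ are treated as known: collecting the terms carrying $M(x,y)$ gives
\begin{multline*}
M(x,y)\Bigl[1-xywt\bigl(qy+(\nu-1)(y-1)\bigr)M(1,y)-xyt(x\nu-1)M(x,1)\\
-\frac{x^2ywt(\nu-1)}{x-1}-\frac{xy^2t}{y-1}\Bigr]=1-\frac{xywt(\nu-1)}{x-1}M(1,y)-\frac{xyt}{y-1}M(x,1).
\end{multline*}
The bracket on the left has constant term $1$ as a power series in $t$, hence is a nonzero element of the algebraic closure of $\Q(q,\nu,t,w,x,y)$; dividing, $M(x,y)$ becomes a rational function of $x,y,t,q,\nu,w$ and of the two series $M(1,y)$, $M(x,1)$. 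Since algebraic functions form a field, it suffices to show $M(1,y)$ and $M(x,1)$ are algebraic. For (B): the planar dual is an involution on rooted maps exchanging $\dv$ and $\df$, so substituting $N\mapsto N^*$ in~\eqref{potts-planar-def}, setting $y=1$, and using~\eqref{eq:duality-Potts-poly} together with Euler's relation yields an identity of the shape
\[
M(q,\nu,t,w;x,1)=M\!\Bigl(q,\ 1+\tfrac{q}{\nu-1},\ (\nu-1)tw,\ \tfrac{1}{wq};\ 1,\ x\Bigr)
\]
(one may take $\nu$ generic, $\nu\neq 1$). The first argument $q$ is unchanged, hence still of the form $2+2\cos(j\pi/m)$ with $q\neq 0,4$; so (A), applied with these dual parameters and with catalytic variable $x$, gives algebraicity of $M(x,1)$. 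Thus (B) and (C) reduce the whole theorem to (A).

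Step (A) is the heart of the matter, and the part inherited from Tutte. I would attach to~\eqref{eq:M} its \emph{kernel curve} $\mathcal K$ (the algebraic curve produced by the quadratic-method reduction, along which the catalytic data of the equation become redundant), and exploit the fact that the Potts structure equips $\mathcal K$ with a canonical shift automorphism whose multiplier $\lambda$ satisfies $\lambda+\lambda^{-1}=q-2$. When $q=2+2\cos(j\pi/m)$ one has $\lambda=e^{\,ij\pi/m}$, a root of unity, so the shift has finite order; averaging the functional equation over a finite orbit of the shift --- this is Tutte's ``invariant'' mechanism, governed by generalised Chebyshev polynomials in $q$ --- makes the contributions of the unwanted catalytic series $M(x,1)$ telescope away and leaves a polynomial equation relating $M(1,y)$, $y$, $t$ and only \emph{finitely many} extra unknown series, namely the first few derivatives $\partial_y^{\,i}M(1,y)\big|_{y=1}$. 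After the harmless linear change of the catalytic variable that moves the base point from $y=1$ to $y=0$, this is an equation of the form~\eqref{main-eq}, and Theorem~\ref{generic-thm} yields algebraicity of $M(1,y)$, completing the proof.

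The whole obstacle lies inside (A): writing $\mathcal K$ and its shift down explicitly, proving that the shift has finite order \emph{exactly} when $q=2+2\cos(j\pi/m)$ (the value $q=4$, where $\lambda=1$ and the shift degenerates, must be excluded, and $q=0$ for an analogous more technical reason), and --- most laboriously --- carrying the orbit-averaging all the way to an explicit closed one-catalytic-variable equation while verifying that only finitely many auxiliary series survive. This is precisely the computation that occupied Tutte for a decade in the properly $q$-coloured triangulation case, here adapted to general $q$-coloured planar maps; for the integer values it becomes manageable by hand (for $q=1$ it degenerates to~\eqref{1cat-planaires1}, for $q=2$ to a large equation with the two extra unknowns $M(1,1)$ and $\partial_yM(1,y)|_{y=1}$), which is what makes the explicit $q=2,3$ results promised in the introduction possible.
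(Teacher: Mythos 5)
Your overall architecture coincides with the paper's: for $q=2+2\cos(j\pi/m)$, $q\neq 0,4$, derive from~\eqref{eq:M} an equation in the \emph{single} catalytic variable $y$ for $M(1,y)$ involving finitely many additional unknown series in $t$ (for $q=2$ these are exactly $M(1,1)$ and $\partial_y M(1,y)\vert_{y=1}$, as the survey records), and then invoke Theorem~\ref{generic-thm}. Note, however, that the survey states Theorem~\ref{thm:alg-planaires} \emph{without proof} and defers precisely your step (A) to the cited work of Bernardi and the author; your account of (A) — finite order of the kernel automorphism when $\lambda+\lambda^{-1}=q-2$ is $2\cos(j\pi/m)$, Chebyshev-type relations, elimination of $M(x,1)$ — is the right intuition but, as you acknowledge, remains a plan: in the reference the mechanism is formulated through Tutte's theory of invariants (the existence of an additional invariant exactly at these values of $q$) rather than a literal orbit-averaging, and carrying that construction to an explicit closed one-catalytic-variable equation is the entire substance of the theorem, so your proposal is not a self-contained proof at that point any more than the survey is. Your steps (B) and (C), on the other hand, are correct and supply a detail the survey glosses over (its sketch only discusses $M(1,y)$, while the theorem concerns $M(x,y)$): the duality identity $M(q,\nu,t,w;x,1)=M\bigl(q,\,1+\tfrac{q}{\nu-1},\,(\nu-1)tw,\,\tfrac{1}{wq};\,1,\,x\bigr)$ does follow from~\eqref{eq:duality-Potts-poly} together with Euler's relation, the parameter $q$ is unchanged so the hypothesis on $q$ is preserved, and~\eqref{eq:M} is indeed linear in $M(x,y)$ once $M(1,y)$ and $M(x,1)$ are treated as known, with your bracket invertible since its constant term in $t$ is $1$; hence algebraicity of the two boundary series does yield algebraicity of $M(x,y)$ over $\Q(q,\nu,t,w,x,y)$.
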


A similar method works for quasi-triangulations.
\begin{theorem}\label{thm:alg-triang}
 Let $q\not = 0,4$ be of the form $2+2\cos j\pi/m$ for two integers
 $j$ and $m$.
Then the series $\gQ(q, \nu, t ,z ; x,y)$, defined by~\eqref{eq:Q}, 
is algebraic over $\Q(q, \nu, t, z,x,y)$. 
\end{theorem}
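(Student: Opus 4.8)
The plan is to mimic, for quasi-triangulations, the route sketched in Section~\ref{sec:beraha} for Theorem~\ref{thm:alg-planaires}: from the equation~\eqref{eq:Q} (or its Tutte-polynomial form~\eqref{eq:Tutte-triang}) with the two catalytic variables $x$ and $y$, one wants to manufacture, whenever $q=2+2\cos(j\pi/m)$, a \emph{polynomial equation with the single catalytic variable $y$} satisfied by the section $Q(0,y)$ --- the Potts generating function of near-triangulations --- together with finitely many auxiliary one-variable series; Theorem~\ref{generic-thm} then yields that $Q(0,y)$ is algebraic, and substituting this back into~\eqref{eq:Q} and solving for $Q(x,y)$ (a polynomial relation in $x$ once $Q(0,y)$, $Q_1(x)$, $Q_2(x)$ are available) gives algebraicity of the full series $Q(x,y)$ over $\Q(q,\nu,t,z,x,y)$. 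This is essentially Tutte's own decade-long argument for properly $q$-coloured triangulations~\cite{tutte-chromatic-revisited}, reorganised and extended as in~\cite{bernardi-mbm}, and the integer values $q=1,2,3$ should come out with explicit algebraic equations.

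First I would normalise~\eqref{eq:Q}: clear the denominators $1-xzt\nu$ and replace $Q_2(x)$ by its given expression in $Q_1(x)$, so that the equation reads, schematically, $K(x,y)\,Q(x,y)=(\mbox{terms in } Q(0,y), Q_1(x))$, and then pass to a parametrisation of $x$ --- as in the substitution $x=1/(1-u)$ used for~\eqref{eq:bip-T}, or Tutte's trigonometric change of variable --- in which the two ``conjugate'' values of $x$ attached to a fixed $y$ by the kernel are swapped by a M\"obius involution $\iota$, and symmetrically for the $y$-variable by an involution $\jmath$. The composite $\theta=\iota\jmath$ is a birational transformation of the kernel; the arithmetic heart of the matter is that the \emph{order} of $\theta$ is governed by $q$, being finite precisely when $q=2+2\cos(j\pi/m)$ (the excluded value $q=4$ corresponds to $\theta$ parabolic of infinite order). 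This is the only place the hypothesis on $q$ enters.

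Given that $\theta$ has finite order, I would construct, following Tutte, an \emph{invariant}: a rational function of $x$ (or of $x$ and $y$) taking equal values at conjugate points, and whose degree is bounded in terms of $m$. Writing the functional equation at the successive points of a $\theta$-orbit and forming the appropriate (signed) combination makes the genuinely bivariate unknown $Q(x,y)$ telescope away, leaving a relation among $Q(0,y)$, finitely many of its $y$-derivatives at controlled arguments, and the invariant; since the invariant is \emph{polynomial} of bounded degree, only $y$ survives as a catalytic variable, and the relation has the admissible shape~\eqref{main-eq}. Theorem~\ref{generic-thm} then applies, and the back-substitution step completes the proof.

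The main obstacle is the invariant itself: one must prove that the algebraic function cut out by the kernel carries a rational invariant --- Tutte's ``pasting'' construction, formalised in the theory of invariants of~\cite{bernardi-mbm} --- and control its degree, which is exactly where $q=2+2\cos(j\pi/m)$ is decisive. Secondary difficulties are (i) identifying which finitely many specialisations of $Q(0,y)$ (values and derivatives at the orbit points, including points such as the image of $y$ under $\jmath$) actually occur, and checking that the reduced equation is \emph{well-founded} in the sense required by Theorem~\ref{generic-thm}; and (ii) the extra terms $Q_1(x),Q_2(x)$ and the rational factor $1/(1-xzt\nu)$ that make~\eqref{eq:Q} heavier than the planar-map equation~\eqref{eq:M}, so that the preliminary normalisation and the orbit bookkeeping are more laborious. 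As the excerpt warns for $k>1$, the final elimination ``will often end up being very heavy'', but it is mechanical once the invariant is in hand.
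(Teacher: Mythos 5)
Your proposal follows the same route as the paper, which states Theorem~\ref{thm:alg-triang} without proof and attributes it to~\cite{bernardi-mbm}: derive from~\eqref{eq:Q}, when $q=2+2\cos(j\pi/m)\neq 0,4$, an equation in the single catalytic variable $y$ for the near-triangulation series (together with finitely many unknown univariate series), then conclude by Theorem~\ref{generic-thm} and back-substitution. One caveat: since~\eqref{eq:Q} is nonlinear and its kernel itself involves the unknown sections $\gQ_1(x)$, $\gQ_2(x)$ and $\gQ(0,y)$, the reduction cannot be carried out by summing the equation over a finite orbit of M\"obius involutions as in the linear cases of Sections~\ref{sec:bip}--\ref{sec:trees}; in~\cite{bernardi-mbm} the bivariate unknown is eliminated by cancelling the kernel at its two roots, and the one-catalytic-variable equation then comes from Tutte's invariants and the ``theorem of invariants'', which is where the hypothesis on $q$ (existence and bounded degree of a second invariant) really enters --- consistent with what you yourself acknowledge in your final paragraph.
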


For the two integer values  $q=2$ (the Ising model) and
$q=3$ (the 3-state Potts model), we have applied the procedure
described in Section~\ref{sec:quad} to obtain explicit
algebraic equations satisfied by $\gM(q, \nu,t,1;1,1)$ and $\gQ(q,
\nu,t,1;1,1)$. However, when $q=3$, we could only solve the case
$\nu=0$ (corresponding to proper colourings). The final equations are remarkably
simple. We give them here for  general planar maps. For
triangulations, these equations are not new: the Ising model on
triangulations was already solved  by several other methods (including
bijective ones, see Section~\ref{sec:bij-ising} for details), and properly
3-coloured triangulations are just Eulerian triangulations, as
discussed in Section~\ref{sec:more-func-eq}. With the help of Bruno
Salvy, we have also  conjectured an algebraic equation of degree
11 for the \gf\ of properly 3-coloured cubic maps (maps in which all vertices
have degree 3). By the duality relation~\eqref{eq:duality-Potts-poly},
this corresponds to the
series $\gQ(q,\nu,t,1;1,1)$ taken at $q=3$, $\nu=-2$. 

\begin{theorem}
\label{thm:2}
The Potts \gf\ of planar maps $M(2,\nu,t,w;x,y)$, defined
by~\eqref{potts-planar-def} and taken at $q=2$, is algebraic.
 The specialisation $M(2,\nu,t,w;1,1)$ 
has degree  $8$ over  $\Q(\nu, t,w)$. 

When $w=1$, the degree decreases to
 $6$, and the equation admits a rational parametrisation.
Let $S\equiv S(t)$ be the unique power series in $t$ with constant
term $0$ satisfying
$$
S=t\; \frac{\left( 1+3\,\nu\,S-3\,\nu\,{S}^{2}-{\nu}^{2}{S}^{3} \right)
  ^{2}}
{
  1-2\,S+2\,{\nu}^{2}{S}^{3}-{\nu}^{2}{S}^{4}  }.
$$
Then
\begin{multline*}
  M(2, \nu, t,1;1,1)= \frac{
  1+3\,\nu\,S-3\,\nu\,{S}^{2}-{\nu}^{2}{S}^{3}}
{\left(1-2\,S+2\,{\nu}^{2}{S}^{3}-{\nu}^{2}{S}^{4}\right)^2}\times
\\
\left(
{\nu}^{3}{S}^{6}
+2\, {\nu}^{2} (1- \nu ){S}^{5}
+\nu\, ( 1-6\,\nu ) {S}^{4}
-\nu\, ( 1-5\,\nu ) {S}^{3}
+ (1+ 2\,\nu ) {S}^{2}
-(3+ \nu ) S
+1
\right)
.
\end{multline*}
\end{theorem}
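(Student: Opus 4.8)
Recall from Section~\ref{sec:beraha} that at $q=2$ one can transform~\eqref{eq:M} into a polynomial equation with the \emph{single} catalytic variable $y$ satisfied by $F(y):=M(2,\nu,t,w;1,y)$, of the shape~\eqref{main-eq-pol}
$$
\mathcal E:\qquad P\big(F(y),\,F(1),\,F'(1),\,t,w;\,y\big)=0 ,
$$
whose only unknown series besides $F(y)$ are $F(1)=M(2,\nu,t,w;1,1)$ and $F'(1)$; thus $k=2$ in the terminology of Section~\ref{sec:quad}. (The divided difference in~\eqref{eq:M} sits at $y=1$ rather than $y=0$, but after a Möbius change of variable sending $y=1$ to the origin — exactly the substitution used in the proofs of Propositions~\ref{prop:bip-triang} and~\ref{prop:tree-rooted} — the hypotheses of Theorem~\ref{generic-thm} and the recipe of Section~\ref{sec:quad} apply verbatim.) Theorem~\ref{generic-thm} then yields at once that $F(y)$, and in particular $M(2,\nu,t,w;1,1)$, is algebraic over $\Q(\nu,t,w)$, which is the first assertion. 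Everything after this is the task of producing the \emph{explicit} equations.

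\paragraph{The degree-$8$ equation.}
Apply the constructive procedure of Section~\ref{sec:quad} to $\mathcal E$: form $\partial P/\partial y_0=0$ and check, by inspecting its lowest-order terms in $t$, that it has exactly two roots $Y_0,Y_1$ that are Puiseux series in $t$ of non-negative valuation, their coefficients being determined recursively from those of $F$. This gives the system~\eqref{syst-cat} of $3k=6$ polynomial equations in the six unknowns $Y_0,Y_1,F(Y_0),F(Y_1),F(1),F'(1)$. Eliminating $Y_0,Y_1,F(Y_0),F(Y_1)$ — the ``very heavy'' elimination anticipated in Section~\ref{sec:quad} when $k>1$, carried out with resultants or Gröbner bases — leaves a polynomial relation for $M(2,\nu,t,w;1,1)$, which one computes to be irreducible of degree $8$ over $\Q(\nu,t,w)$; a parallel elimination produces the minimal polynomial of $F'(1)$ and provides a consistency check. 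I expect this elimination to be the main computational obstacle: the system consists of two copies of the same triple of equations, so naive elimination blows up and one must be careful about spurious factors.

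\paragraph{The specialisation $w=1$ and the parametrisation.}
Setting $w=1$ in the degree-$8$ polynomial, one finds that it factors and that the factor annihilating $M(2,\nu,t,1;1,1)$ has degree $6$. The key structural point — and the one feature that is not guaranteed a priori — is that the plane curve cut out by this sextic (in the variables $M$ and $t$, with $\nu$ a parameter) has geometric genus~$0$; a genus computation confirms this, and then the curve admits a rational uniformisation, which one computes. A convenient uniformiser is the series $S=S(t)$ characterised by $S=t\,(1+3\nu S-3\nu S^{2}-\nu^{2}S^{3})^{2}/(1-2S+2\nu^{2}S^{3}-\nu^{2}S^{4})$, which is a well-defined power series because the right-hand side equals $t$ at $S=0$, so $S=t+O(t^{2})$; in this parameter $M(2,\nu,t,1;1,1)$ is the displayed rational function of $S$. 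To conclude, substitute the parametrisation into the degree-$6$ equation to verify it is satisfied identically, and pin down the correct branch by matching the first few Taylor coefficients against the direct expansion of $\tfrac12\sum_{M\in\mM}t^{\ee(M)}\Ppol_M(2,\nu)$ (both sides equal $1$ at $t=0$). The analogous statements for triangulations and for properly $3$-coloured cubic maps are obtained in the same way, starting from~\eqref{eq:Q} (and, for cubic maps, specialising to $\nu=-2$ via the duality~\eqref{eq:duality-Potts-poly}); only the $\nu=0$ slice is within reach for $q=3$ because there the extra unknown $F'(1)$ drops out and the computation collapses to the equation~\eqref{eq:bip} for bipartite maps.
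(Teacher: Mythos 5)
Your proposal follows essentially the same route that the paper outlines for this theorem (which it states without a detailed proof, deferring to~\cite{bernardi-mbm}): use the $q=2$ reduction of~\eqref{eq:M} to a polynomial equation with the single catalytic variable $y$ for $M(1,y)$ involving the two extra unknowns $M(1)$ and $M'(1)$, invoke Theorem~\ref{generic-thm} for algebraicity, run the elimination procedure of Section~\ref{sec:quad} with $k=2$ to obtain the degree-$8$ relation, then specialise $w=1$ and uniformise the resulting genus-zero sextic. The one caveat is that the existence and precise form of that single-catalytic-variable equation is itself the hard step (it comes from the invariant-type machinery of~\cite{bernardi-mbm}, not from a mere M\"obius shift of the divided difference at $y=1$ in~\eqref{eq:M}), but since you take it as given from Section~\ref{sec:beraha}, exactly as the survey does, your argument matches the paper's.
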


\begin{theorem}\label{thm:3}
The Potts \gf\ of planar maps $M(3,\nu,t,w;x,y)$, defined
by~\eqref{potts-planar-def} and taken at $q=3$, is algebraic.

The specialisation  $M(3,0,t,1;1,1)$ 
that counts \emm properly, three-coloured planar maps by edges,
has degree $4$ over  $\Q(t)$, and  admits a rational parametrisation.
Let $S\equiv S(t)$ be the unique power series in $t$ with constant
term $0$ satisfying
$$
t= \frac{ S(1-2\,S^3)  }{\left( 1+2S \right)  ^{3}}.
$$
Then
$$
  M(3,0, t,1;1,1)= 
{\frac { \left(1+ 2\,S \right)  
\left(1 -2\,{S}^{2}-4\,{S}^{3}-4\,{S}^{4} \right) }
{ \left(1- 2\,{S}^{3} \right) ^{2}}}.
$$
\end{theorem}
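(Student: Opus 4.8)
The first assertion --- algebraicity of $M(3,\nu,t,w;x,y)$ over $\Q(\nu,t,w,x,y)$ --- is immediate from Theorem~\ref{thm:alg-planaires}, since $3=2+2\cos(\pi/3)$ is of the form $2+2\cos(j\pi/m)$ with $j=1$, $m=3$, and $3\notin\{0,4\}$. So the real work lies entirely in making the specialisation $M(3,0,t,1;1,1)$ explicit.

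The plan is to follow, at $q=3$, the reduction underlying Theorem~\ref{thm:alg-planaires} and then to run the procedure of Section~\ref{sec:quad}. First I would set $q=3$ in the two-catalytic-variable equation~\eqref{eq:M}. As in the bipolar-orientation computations of Section~\ref{sec:bip}, the kernel of~\eqref{eq:M} is invariant under a pair of involutions acting on the pair $(x,y)$, and the point of $q$ being of the form $2+2\cos(j\pi/m)$ is precisely that the group they generate is finite; here $m=3$, so this group has order $2m=6$ (the same order as in the bipolar case, which corresponds to $q=2+2\cos(2\pi/3)=1$). Iterating the group on~\eqref{eq:M} and forming the appropriate linear combination of the six resulting specialisations --- using the Chebyshev-type identity valid at $q=3$ --- cancels the terms carrying the $x$-dependence and yields a \emph{polynomial equation in the single catalytic variable $y$} for $M(y):=M(1,y)$, whose coefficients involve, besides $M(y)$, only finitely many auxiliary unknown series, namely $M(1,1)$ and a bounded number of its $y$-derivatives $\partial_y^i M(1,1)$ at $y=1$ (for $q=2$ one gets exactly $M(1,1)$ and $\partial_y M(1,1)$; for $q=3$ one gets a similarly short list). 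Specialising $\nu=0$ and $w=1$ trims this list still further.

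With such a one-catalytic-variable equation $P\bigl(M(y),M(1,1),\partial_y M(1,1),\dots,t;y\bigr)=0$ in hand, Theorem~\ref{generic-thm} re-proves algebraicity, and the recipe of Section~\ref{sec:quad} makes it effective: one locates the Puiseux-series solutions $Y_i$ of the derived equation~\eqref{eq-dy0}, writes the system~\eqref{syst-cat}, and eliminates the $Y_i$ and the $M(Y_i)$ by resultants. This produces a polynomial equation of degree $4$ over $\Q(t)$ for $M(3,0,t,1;1,1)$. Computing the genus of the associated plane curve shows it is $0$, so the curve admits a rational parametrisation; reading one off (for instance with Maple's \texttt{algcurves}) gives the uniformising series $S$ together with the announced closed form, and the two displayed identities --- $t=S(1-2S^3)/(1+2S)^3$ and the formula for $M(3,0,t,1;1,1)$ --- are then verified merely by substituting back into the degree-$4$ equation.

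The main obstacle is the elimination step in~\eqref{syst-cat}: since that system is a bundle of several copies of the \emph{same} triple of equations (one copy per auxiliary unknown $\partial_y^i M(1,1)$), it is far from the three-line resultant computation available when $k=1$, and in practice becomes very heavy --- exactly the phenomenon flagged right after Theorem~\ref{generic-thm}. A secondary, bookkeeping-type difficulty is carrying out the Beraha reduction cleanly at $q=3$: one must keep track of which conjugates of $x$ are produced by the order-$6$ group and which $y$-derivatives of $M$ at $y=1$ survive the cancellation. For general $\nu$ at $q=3$ these (still finite) computations become unwieldy, which is why only the proper-colouring case $\nu=0$ is made fully explicit here; algebraicity of $M(3,\nu,t,w;x,y)$ for arbitrary $\nu$ rests on Theorem~\ref{thm:alg-planaires} alone.
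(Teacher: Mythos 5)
Your framing of the first assertion is exactly the paper's: algebraicity of $M(3,\nu,t,w;x,y)$ is the case $j=1$, $m=3$ of Theorem~\ref{thm:alg-planaires}, and the explicit part is obtained (as the paper says, without giving details, citing~\cite{bernardi-mbm}) by applying the elimination procedure of Section~\ref{sec:quad} to an equation in the single catalytic variable $y$, then exhibiting a rational parametrisation and checking it by substitution. That endgame of your proposal is sound.

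The gap is in the central step, the reduction from two catalytic variables to one at $q=3$. You propose to treat~\eqref{eq:M} the way~\eqref{eq:bip-M} and~\eqref{eq:bip-T} are treated in Section~\ref{sec:bip}: find two involutions preserving ``the kernel'', generate a group of order $6$, and take an alternating sum of the six specialised equations to cancel the $x$-dependence. This cannot work as stated. The bipolar-orientation equations are \emph{linear} only because of the specialisation (a $q$-derivative at $q=1$, $\nu=0$, using $M(1,0,t,w;x,y)=1$); their kernels are explicit Laurent polynomials, and orbit summation eliminates the unknown univariate series because they occur linearly. Equation~\eqref{eq:M} at $q=3$, $\nu=0$ remains genuinely non-linear: it contains the products $M(x,y)M(1,y)$ and $M(x,y)M(x,1)$, so the coefficient of $M(x,y)$ itself involves the unknown series $M(1,y)$ and $M(x,1)$ --- there is no fixed rational kernel for a group to act on, and an alternating sum over an orbit does not cancel the quadratic terms. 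The actual reduction in~\cite{bernardi-mbm} follows Tutte's method of \emph{invariants}: the equation always admits one invariant, and precisely when $q=2+2\cos(j\pi/m)$ a Chebyshev-type identity yields a second, independent invariant; a ``theorem of invariants'' then converts this pair into a polynomial equation with one catalytic variable $y$ for $M(1,y)$, involving finitely many extra unknowns such as $M(1)$ and derivatives of $M(1,y)$ at $y=1$ (for $q=2$, exactly $M(1)$ and $M'(1)$, as recalled in Section~\ref{sec:beraha}). Without this step, or a genuine substitute for it, your argument never reaches the one-catalytic-variable equation on which the explicit degree-$4$ result and its parametrisation depend.
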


\subsection{The general case: differential equations}
\label{sec:de}
The culminating, and final point in Tutte's study of properly
coloured triangulations was a non-linear differential equation
satisfied by their \gf. For the more complicated problem of
counting maps weighted by their Potts polynomial, we have come with a
\emm system, of differential equations that defines the corresponding
\gf~\cite{bernardi-mbm-de}. 
One compact way to write this system is as follows.
\begin{theorem}\label{thm:de-maps}
Let $\beta=\nu-1$ and
$$
\Delta(t,v)=(q\nu+\be^2)-q (\nu+1 ) v+ (\be t ( q-4 )  ( wq+\be ) +q)v^2.
$$  
There exists a unique triple $(A(t,v), B(t,v),C(t,v))$ of
  polynomials in $v$ with coefficients in $\Q[q,\nu,w][[t]]$, having
  degree $4, 2$ and $2$ respectively in $v$, such that
$$
  \begin{array}{ll}
    A(0,v)=(1-v)^2,& \quad A(t,0)=1,\\
B(0,v)= 1-v, &\quad C(t,0)= w(q+2\beta)-1-\nu,
 \end{array}$$
and
\beq\label{de}
\frac 1{C(t,v)}\frac{\partial }{\partial v} \left( \frac{v^4
  C(t,v)^2}{A(t,v) \Delta(t,v)^2}\right)
= \frac {v^2}{B(t,v)}\frac{\partial }{\partial t} \left( \frac{
  B(t,v)^2}{A(t,v) \Delta(t,v)^2}\right).
\eeq
Let $A_i(t)$ (resp.~$B_i(t)$) denote the coefficient of $v^i$ in
$A(t,v)$ (resp.~$B(t,v)$). Then the Potts \gf\ of planar maps,
$M(1,1)\equiv M(q,\nu, t, w;1,1)$, defined
by~\eqref{potts-planar-def},  is related to $A$
and $B$ by
\begin{multline*}
  12\,{t}^{2}w \left( q\nu+{\beta}^{2} \right) M ( 1,1 ) 
-A_2 ( t ) +2\,B_2 ( t ) 
-8\,t \left( w(q+2\beta)-\nu-1 \right) B_1( t )+  B_1 ( t  ) ^{2} 
\\= 4\,t \left( 1-3\,
 ( \beta+2 ) ^{2}t
+ \left( 6\, ( \beta+2)  ( q+2\,\beta ) t+q+3\,\beta \right) w
-3\,t ( q+2\,\beta ) ^{2}{w}^{2}\right).
\end{multline*}
\end{theorem}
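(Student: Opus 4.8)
The only input is the two-catalytic-variable functional equation~\eqref{eq:M} for the Potts \gf\ $\gM(x,y)$, and the plan is to follow and adapt Tutte's decade-long treatment of properly coloured triangulations. I would begin by recasting~\eqref{eq:M} after a birational change of the catalytic variables, so that one of them survives in the guise of the variable $v$ of the statement and $\Delta(t,v)$ becomes the natural discriminant of the transformed equation. The heart of the method is then Tutte's notion of an \emph{invariant}: a rational fraction in the surviving catalytic variable, with coefficients in $\Q[q,\nu,w][[t]]$, that is fixed by the involution attached to the kernel of the transformed equation (the involution exchanging the two roots of that kernel, seen as a polynomial in the other variable). One such invariant is built from $\gM$ and its sections $\gM(1,y),\gM(x,1)$, another from an explicit rational fraction, and \emph{Tutte's invariant lemma} forces any two invariants whose poles lie at prescribed places with prescribed orders to satisfy a polynomial identity. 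Carrying this through realises $A(t,v),B(t,v),C(t,v)$ as the polynomial numerators and denominators of the invariants in play, which pins down the announced degrees $4,2,2$ in $v$ from the pole bounds.

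Next I would pass from algebraicity to differentiation. When $q=2+2\cos(j\pi/m)$ the transformation group is \emph{finite}, the invariants are algebraic, and the invariant lemma degenerates to an algebraic relation --- this is Theorems~\ref{thm:alg-planaires} and~\ref{thm:alg-triang}. For general $q$ the group is infinite and that route is blocked; Tutte's device is that a suitably defined $t$-derivative of an invariant is again an invariant, so differentiating the family of invariant relations produces a \emph{system} of differential constraints in place of a single algebraic one. The compact equation~\eqref{de} is exactly the integrability condition of this family: it is equivalent to $\partial_t\!\left(B/(\sqrt{A}\,\Delta)\right)=\partial_v\!\left(v^2 C/(\sqrt{A}\,\Delta)\right)$, i.e. to the closedness of the $1$-form $\frac{B}{\sqrt{A}\,\Delta}\,dv+\frac{v^2 C}{\sqrt{A}\,\Delta}\,dt$, so that these two fractions are the $v$- and $t$-derivatives of one common potential assembled from $\gM$ and its sections.

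The remaining points are comparatively mechanical. The normalisations $A(0,v)=(1-v)^2$, $B(0,v)=1-v$, $A(t,0)=1$ and $C(t,0)=w(q+2\beta)-1-\nu$ are read off by setting $t=0$ (only the atomic map contributes) and by specialising the catalytic variable to its base value; \emph{uniqueness} of the triple follows because, extracted coefficient by coefficient in powers of $t$, equation~\eqref{de} determines $A,B,C$ recursively. Finally one unwinds the change of variables to express $\gM(1,1)$ --- together with the auxiliary section $M'(1)$ already surfacing in the $q=2$ analysis --- in terms of the $A_i(t)$ and $B_i(t)$; the stated closed identity then results once all the unknown sections cancel, the same cancellation phenomenon that already makes the linear equations~\eqref{eq:tM-11} and~\eqref{Q-S} solvable by the kernel method.

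\textbf{Main obstacle.} The genuinely hard step is the invariant lemma in its differential incarnation: one must control with complete precision the location and order of the poles of the relevant invariants, as Laurent or Puiseux series in the catalytic variable, so that the polynomial/differential relation is \emph{pinned down} and not merely constrained; one must justify that all manipulations with algebraic functions of the catalytic variable and with divided differences are valid identities of \fps\ in $t$; and one must organise the resulting mass of relations so that it collapses to the clean shape~\eqref{de} rather than to something unprintable. That is the part which cost Tutte ten years in the triangulation case, and it is where the real work lies here too.
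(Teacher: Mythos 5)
Your plan does identify the strategy that Bernardi and the author actually follow in the cited works \cite{bernardi-mbm,bernardi-mbm-de}: Tutte-style invariants attached to the kernel of a transformed version of~\eqref{eq:M}, an invariant lemma controlled by pole locations, algebraicity when the associated group is finite (the Beraha-type values of $q$), and a differential avatar of the invariant relation for generic $q$; your observation that~\eqref{de} is equivalent to $\partial_t\bigl(B/(\sqrt{A}\,\Delta)\bigr)=\partial_v\bigl(v^2C/(\sqrt{A}\,\Delta)\bigr)$ is also correct. Note, though, that the survey itself states Theorem~\ref{thm:de-maps} \emph{without proof} (it explicitly defers to~\cite{bernardi-mbm-de}), so there is no in-paper argument to measure against; what can be assessed is whether your text would stand as a proof on its own.

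It would not, and that is the genuine gap: every step that actually constitutes the theorem is deferred rather than carried out. You do not construct the invariants for equation~\eqref{eq:M} (nor verify that your candidate fractions are invariant under the kernel involution), you do not state or prove the invariant lemma with the precise control of poles as series in $t$ that you yourself flag as the crux, you do not show how $A$, $B$, $C$ emerge with degrees $4$, $2$, $2$ in $v$, and the uniqueness claim --- that the initial conditions together with~\eqref{de} determine the triple recursively in powers of $t$ --- is asserted, not checked, although it is part of the statement to be proved. Likewise the final identity expressing $M(1,1)$ in terms of $A_2$, $B_1$, $B_2$ is announced as ``mechanical'' cancellation of unknown sections, with no derivation; nothing in the proposal guarantees that the cancellation occurs or produces the stated right-hand side. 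As written, the text is a programme whose hard parts are acknowledged but not executed, so it cannot be accepted as a proof of the theorem.
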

\noindent{\bf Comments}\\
1.  Let us write 
$$
A(t,v)=\sum_{j=0}^4 A_j(t) v^i, \quad B(t,v)=\sum_{j=0}^2 B_j(t) v^i,
\quad C(t,v)=\sum_{j=0}^2 C_j(t) v^i.
$$
The differential equation~\eqref{de} then translates into a system of
9 differential equations (with respect to $t$) relating the 11 series
$A_j, B_j, C_j$. However, $A_0(t)=A(t,0)$ and $C_0(t)=C(t,0)$ are given
explicitly as initial conditions, so that there are really as many unknown
series in $t$ as differential equations. Observe moreover that  no derivative of the
series $C_j(t)$ arise in the system. This is why we only need initial conditions for
the series $A_j(t)$ and $B_j(t)$. They are prescribed by the
values of $A(0,v)$ and $B(0,v)$.

\noindent
2. The form of the above result is very close to Tutte's solution of
properly coloured planar triangulations, which can be stated as in
Theorem~\ref{thm:Tutte-triang} below. However, Tutte's case is
simpler, as it boils 
down to only 4 differential equations. This explains why Tutte could
derive from his system a \emm single, differential equation for
the \gf\ of properly  coloured triangulations. More
precisely, it follows from the theorem below
 that, if  $t=z^2$ and $H\equiv H(t)= t^2T_2(1)$,
\beq\label{Tutte-ED}
2q^2(1-q)t +(qt+10H-6tH')H''+q(4-q)(20H-18tH'+9t^2H'')=0.
\eeq
So far, we have  not been able to
derive from Theorem~\ref{thm:de-maps} a single differential equation for coloured
planar maps. 
\begin{theorem}\label{thm:Tutte-triang}
Let
$$
\Delta(v)=v+4-q.
$$  
There exists a unique pair $(A(z,v), B(z,v))$ of
  polynomials in $v$ with coefficients in $\Q[q][[z]]$, having
  degree $3$ and $1$ respectively in $v$, such that
$$
  \begin{array}{ll}
    A(0,v)=1+v/4,& \quad A(z,0)=1,\\
 B(0,v)= 1,&
 \end{array}$$
and
$$
-\frac {4z}{v}\frac{\partial }{\partial v} \left( \frac{v^3
  }{A(z,v) }\right)
= \frac {1}{B(z,v)\Delta(v)}\frac{\partial }{\partial z} \left( \frac{
  B(z,v)^2}{A(z,v) }\right).
$$
Let $A_i(z)$ (resp.~$B_i(z)$) denote the coefficient of $v^i$ in
$A(z,v)$ (resp.~$B(z,v)$). Then the  face \gf\ of properly $q$-coloured planar
near-triangulations having outer-degree $2$,
denoted $T_2(q,z;1)$ and defined by~\eqref{eq-Tutte}, is related to $A$ and $B$ by
\begin{multline*}
20\,{z}^{4} (q -4 ) T_2 (q, z;1 )/q -2\,  B_1 ( z )   ^{2}
- \left( 96\,{z}^{2}-24\,{z}^{2}q+1 \right) B_1 ( z ) 
+2\,A_2 ( z ) 
\\
-2\,{z}^{2} \left(10-q+ 432\,{z}^{2}-216\,{z}^{2}q+27\,{z}^{2}{q}^{2}
\right) 
=0.
\end{multline*}
\end{theorem}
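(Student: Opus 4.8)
\medskip\noindent\textbf{Proof idea.}
The plan is to derive the statement from Tutte's functional equation~\eqref{eq-Tutte} for the series $T(x,y)$, which carries the two catalytic variables $x$ and $y$, by eliminating one of them and reducing everything to an algebraic curve equipped with an involution --- Tutte's \emph{invariant} method~\cite{tutte-chromatic-revisited}. First I would view~\eqref{eq-Tutte} as a quadratic equation in $T(x,y)$ and extract its kernel; the vanishing of the kernel defines, for each value of the catalytic variable, a pair of conjugate points, hence a rational involution $\theta$ that preserves the kernel and fixes the section $T(1,y)$. After a suitable change of unknowns the resulting curve admits a rational parametrisation with uniformiser $v$; the polynomial $\Delta(v)=v+4-q$ in the statement is the image of the discriminant factor of the kernel under this parametrisation, and the only boundary series that survives the reduction is $T_2(q,z;1)$.

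The core of the argument is Tutte's theory of \emph{invariants}. One calls a Laurent series in $z$ with coefficients rational in $v$ an invariant if it is unchanged by $\theta$. Two facts have to be proved. $(i)$ A rigidity lemma: an invariant whose poles in $v$ are confined to one prescribed point, of bounded order, and which has a prescribed value at $z=0$, coincides with an explicit rational function of $v$; this is obtained by producing the ``fundamental'' invariant attached to the curve and analysing its divisor. $(ii)$ The construction of two concrete invariants: a ``trivial'' one, $I_1$, obtained by collecting in~\eqref{eq-Tutte} the terms that contain $T(1,y)$, and a second, much less obvious invariant $I_2$ built from $v$ and from $T_2$; verifying $I_2\circ\theta=I_2$ is the delicate step. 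Feeding a well-chosen combination of $I_1$ and $I_2$ into the rigidity lemma then forces a polynomial identity between $I_1$ and $I_2$ over $\Q(q)[[z]]$.

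It then remains to repackage this identity. Clearing denominators, it is equivalent to the existence of polynomials $A(z,v)$ and $B(z,v)$ in $v$, of degrees $3$ and $1$ respectively, for which the two sides of the displayed partial differential equation in the statement agree --- a compatibility (or integrability) condition relating the $v$-derivative and the $z$-derivative of a single rational expression in $A$ and $B$. The initial data $A(0,v)=1+v/4$, $B(0,v)=1$ and the normalisation $A(z,0)=1$ come from evaluating at $z=0$, where only the finitely many near-triangulations with no finite face contribute; uniqueness of $(A,B)$ follows because the differential equation, together with these initial data and the degree bounds in $v$, determines the coefficient series $A_i(z)$ and $B_i(z)$ recursively, order by order in $z$. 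The announced linear relation between $T_2(q,z;1)$, $A_2(z)$ and $B_1(z)$ is obtained by comparing one carefully chosen coefficient (a residue in $v$) in the $I_1$--$I_2$ identity, and eliminating the auxiliary series from the small system of ordinary differential equations that the displayed equation reduces to --- four equations, in Tutte's case --- yields the single equation~\eqref{Tutte-ED}. I expect steps $(i)$ and, above all, $(ii)$ --- pinning down the fundamental invariant and, especially, discovering the second invariant and proving its $\theta$-invariance --- to be the main obstacle by a wide margin: this is the heart of Tutte's decade-long \emph{tour de force}, after which the rest of the argument is essentially bookkeeping.
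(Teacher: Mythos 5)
Your roadmap is the right one in spirit: the known proofs of this statement (Tutte's original decade-long analysis~\cite{tutte-chromatic-revisited} and its reworking in~\cite{bernardi-mbm-de}) do proceed from the functional equation~\eqref{eq-Tutte} via Tutte's theory of invariants, and the survey you were asked to match actually states Theorem~\ref{thm:Tutte-triang} \emph{without proof}, as a repackaging of that work. So there is no clash of methods to discuss; the issue is that what you have written is an itinerary, not a proof.

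Concretely, every load-bearing step is deferred. You never exhibit the involution $\theta$, the ``fundamental'' invariant, or the second invariant built from $T_2$, and you never state, let alone prove, the rigidity lemma $(i)$; you yourself flag $(ii)$ as the main obstacle, which is precisely the content of the theorem, so nothing is actually established. The ``repackaging'' paragraph is equally unsupported: it is not shown how clearing denominators produces a pair $(A,B)$ of the specific degrees $3$ and $1$ in $v$, why $\Delta(v)=v+4-q$ appears where it does, or how the stated initial conditions arise (note that $A$ and $B$ are auxiliary polynomials defined through the invariant identity, not generating functions of maps, so ``only near-triangulations with no finite face contribute at $z=0$'' does not by itself justify $A(0,v)=1+v/4$, $B(0,v)=1$). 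The uniqueness claim is also asserted rather than proved: the displayed identity reduces to a small \emph{nonlinear} system of differential equations (four, in this case) for the unknown coefficient series $A_i(z)$, $B_i(z)$, and one must check that, together with the degree bounds and initial data, each order in $z$ is determined by an invertible linear step --- this requires an argument, not just the phrase ``order by order''. Finally, the explicit linear relation between $T_2(q,z;1)$, $A_2(z)$ and $B_1(z)$, which is the part of the statement that actually mentions the generating function, is dispatched as ``comparing one carefully chosen coefficient'' without identifying the coefficient or performing the comparison. As it stands, the proposal identifies the correct literature strategy but contains no verifiable proof of any component of the theorem.
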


\section{Some bijections for coloured planar maps}
\label{sec:bij-coloured}

Certain specialisations of the Potts \gf\ of planar maps can be
determined using a purely bijective approach. 
\subsection{Bipolar orientations of maps} 
The numbers that arise in the enumeration of bipolar orientations of
planar maps (Proposition~\ref{prop:bip}) are known to count other families of objects: \emm Baxter
permutations, (not the same Baxter as in~\cite{baxter-dichromatic}!),
pairs of \emm twin trees,, and
certain configurations of \emm non-intersecting lattice paths.,
Several bijections have been established recently between
bipolar orientations and these
families~\cite{bonichon-mbm-fusy,felsner-bipolar,fusy-bipolar}. Let us
mention, however, that the only
family that is simple to enumerate is that of  non-intersecting
lattice paths (via the Lindstr\"om-Gessel-Viennot theorem). Hence
bijections with this family are the only ones that really provide a self-contained
proof of Proposition~\ref{prop:bip}.

Regarding bipolar orientations of triangulations
(Proposition~\ref{prop:bip-triang}), we are currently working on certain bijections  with \emm Young
tableaux, of height at most 3, in collaboration with Nicolas Bonichon
and \'Eric Fusy.
\subsection{Spanning trees}
It is not hard to count in a  bijective manner tree-rooted maps with
$i+1$ vertices and $j+1$ faces (Proposition~\ref{prop:tree-rooted}).
The construction below, which is usually attributed to Lehman and
Walsh~\cite{walsh--lehman-II},  is actually not  far 
from Mullin's original proof~\cite{mullin-boisees}. Starting from a
tree-rooted map $(M,T)$, one walks around the tree $T$ in counterclockwise order,
starting from the root-edge of $M$ (Figure~\ref{fig:tree-rooted}, left), and:
\begin{itemize}
\item [--] when an edge $e$ of $T$ is met, one  walks along this edge, and
writes $a$ when $e$ is met for the first time, $\bar a$ otherwise;
\item [--] when an edge $e$ not in $T$  is met, one crosses the edge, and writes 
$b$ when $e$ is met for the first time, $\bar b$ otherwise.
\end{itemize}
This gives a shuffle of two Dyck words\footnote{A Dyck word on the
  alphabet $\{a,\bar a\}$  is a word that contains as many occurrences
  of $a$ and $\bar a$, and such that every prefix contains at least as
  many $a$'s as $\bar a$'s. A shuffle of two Dyck words can be seen as
a walk in the first quadrant of $\Z^2$, starting and ending at the origin.} $u$ and $v$, one of length $2i$ on the
alphabet $\{a,\bar a\}$ (since there are $i$ edges in $T$), one
of length $2j$  on the alphabet $\{b,\bar b\}$  (since there are $j$
edges not in the tree). The number of such shuffles is
$$
{{2i+2j}\choose {2i}} C_i C_j,
$$
where $C_i={{2i}\choose i}/(i+1)$ is the number of Dyck words of
length $2i$. The construction is easily seen to be bijective, and this
gives the second result of Proposition~\ref{prop:tree-rooted}.

As already explained, the first result of
Proposition~\ref{prop:tree-rooted} follows by summing over all 
$i,j$ such that $i+j=n$. A \emm direct, bijective proof 
was only obtained in 2007 by Bernardi~\cite{bernardi-boisees}. It
transforms  a tree-rooted 
map into a pair formed of a plane tree and a non-crossing
partition. See~\cite{bernardi-chapuy-boisees} for a recent extension
to maps of higher genus.

\medskip
Mullin's original  construction~\cite{mullin-boisees} decouples the tree-rooted map $(M,T)$
into two objects (Figure~\ref{fig:tree-rooted}, right):
\begin{itemize}
\item [--] a plane tree with $j$ edges, which is the dual of $T$ and
corresponds to the Dyck word $v$  on $\{b,\bar b\}$ described above,
\item [--]  a plane tree $T'$, which consists of $T$ and of  $2j$
  half-edges; this tree can be seen as the Dyck word $u$ shuffled with the word $c^{2j}$. 
\end{itemize}
The vertex degree
distribution of  $M$ coincides with the degree
distribution of $T'$, and Mullin used this property to count tree-rooted maps
with prescribed vertex degrees (or dually, with prescribed face
degrees, since a map and its dual have the same number of spanning
trees). Indeed,  it is easy to count trees with
a prescribed degree distribution~\cite[Thm.~5.3.10]{stanley-vol-2}. In particular, the number of plane trees with
root-degree $d$, such that $n_k$ non-root vertices have degree $k$,
for $k\ge 1$,  
and carrying in addition $2j$ half-edges,  is
$$
T'(d,j,n_1, n_2, \ldots):=\frac {d (2j-1+ \sum _k n_k)!}{(2j)! \prod_k n_k !},
$$
so that the number of tree-rooted
maps $(M,T)$ in which the root-vertex has degree $d$ and  $n_k$
non-root vertices have degree $k$, for $k\ge 1$,  is 
$$
\frac 1 {j+1}{{2j}\choose j} T'(d,j,n_1, n_2, \ldots)=\frac {d (2j-1+
  \sum _k n_k )!}{j!(j+1)! \prod_k n_k !},
$$
where $j= \ee(M)-\vv(M)+1= (d+\sum_k (k-2)n_k)/2$ is the \emm excess,
of $M$ (and also the number of faces, minus 1).
In particular,  the number of tree-rooted
maps $(M,T)$ having a root-vertex of degree $d$  and $2i-d$ non-root vertices of degree 3 is
$$
\frac{d(4i-d-1)!}{i!(i+1)! (2i-d)!},
$$
since such maps have excess $i$. This is the dual statement of
Proposition~\ref{prop:tree-rooted-triang}.

\begin{figure}[htb]
  \begin{center}
    \includegraphics[scale=1.2]{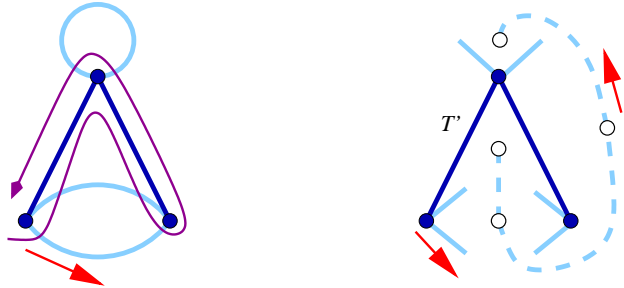}
  \caption{Left: The tour of a tree-rooted map gives an encoding by a
    shuffle of Dyck words, here $bbaa\bar b\bar b\bar a b\bar b \bar
    a$.
Right: Alternatively, one can decouple a tree-rooted map into the dual
  plane tree (dashed lines) and a plane tree $T'$ carrying half-edges.}
\label{fig:tree-rooted}
  \end{center}
\end{figure}

\subsection{The Ising model ($q=2$)}
\label{sec:bij-ising}
As observed in~\cite{mbm-schaeffer-ising}, a simple transformation relates the Potts
\gf\ of maps at $q=2$ to the enumeration of bipartite maps by vertex
degrees.
\begin{prop}
  Let $B(t,v,w;x)$ be the \gf\ of planar bipartite maps, counted by
   edges ($t$), non-root vertices of degree $2$ (variable $v$), non-root vertices of
  degree $\not = 2$ (variable $w$), and degree of the
  root-vertex ($x$). Let $\gM(q,\nu,t, w;x,y)$ be the Potts \gf\ of
  planar maps, defined by~\eqref{potts-planar-def}. Then
$$
\gM\left(2, tv, \frac t{1-t^2v^2},w;x,1\right) = B(t,v+w,w;x).
$$
This identity can be refined by keeping track of the number of non-root
vertices of each degree and colour. Let
$$
\overline M(\nu, t,x_1, x_2, \ldots, y_1, y_2, \ldots; x)
= \sum_{M}\nu^{m(M)}t^{\ee(M)}x^{\dv(M)}\prod_{i\ge
  1}x_i^{\vv_i^\circ(M)}y_i^{\vv_i^\bullet(M)}
,
$$
where  the sum runs over all $2$-coloured maps $M$ rooted at a black
vertex, $m(M)$ is the number of monochromatic edges in $M$, and
$\vv_i^\circ(M)$ (resp.~$\vv_i^\bullet(M)$) is the number of
non-root white (resp.~black) vertices of degree $i$. Let
$$
\overline B(t,x_1, x_2, \ldots, y_1, y_2, \ldots; x)
= \sum_{M}t^{\ee(M)}x^{\dv(M)}\prod_{i\ge
  1}x_i^{\vv_i^\circ(M)}y_i^{\vv_i^\bullet(M)}
,
$$
where the sum runs over all bipartite maps, properly bicoloured in
such a way the root-vertex is black. Then
$$
\overline M\left(tv,\frac t{1-t^2v^2}, x_1, x_2, \ldots, y_1, y_2, \ldots ;
x\right)
=\overline B(t, x_1, v+x_2, x_3,\ldots, y_1, v+y_2, y_3,\ldots; x).
$$
\end{prop}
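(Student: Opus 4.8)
The plan is to establish the refined identity by an explicit weight-preserving bijection, and to read off the first identity as its specialisation in which every $x_i$ and $y_i$ is set to $w$ and $y$ to $1$: that substitution turns $\overline M$ into $\gM(2,\nu,t,w;x,1)$ — the factor $1/q=1/2$ in~\eqref{potts-planar-def} being absorbed by the fact that the colour-swap involution acts without fixed point on $2$-coloured rooted maps, so one may as well fix the root-vertex to be black — while it turns $\overline B(t,x_1,v+x_2,x_3,\dots;y_1,v+y_2,y_3,\dots;x)$ into $B(t,v+w,w;x)$. So it suffices to prove the refined identity (this is the argument of~\cite{mbm-schaeffer-ising}). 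The first step is to rewrite its right-hand side as a sum over triples: expanding the binomials $(v+x_2)^{\vv_2^\circ(B)}$ and $(v+y_2)^{\vv_2^\bullet(B)}$ in $\overline B(t,x_1,v+x_2,\dots;y_1,v+y_2,\dots;x)$ amounts to choosing, in each bipartite map $B$ (properly bicoloured so that the root-vertex is black), a subset $S_\circ$ of its non-root white vertices of degree $2$ and a subset $S_\bullet$ of its non-root black vertices of degree $2$; the triple $(B,S_\circ,S_\bullet)$ then carries the weight $t^{\ee(B)}v^{|S_\circ|+|S_\bullet|}x^{\dv(B)}$ times the product of the variables $x_i,y_j$ recording the degrees of the \emph{unmarked} non-root vertices.

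Next I would describe the bijection. From $(B,S_\circ,S_\bullet)$ one builds a $2$-coloured map $M$ rooted at a black vertex by suppressing every marked vertex: each has degree $2$, the suppression is planar and order-independent, surviving vertices keep their colours, and $M$ is rooted at the edge carrying the root-edge of $B$, oriented away from the root-vertex. The marked vertices arrange themselves into maximal runs of consecutive degree-$2$ marked vertices lying between two unmarked vertices; such a run never closes up into a cycle, since $B$ is connected and its root-vertex is unmarked. A run of $k$ marked vertices is suppressed into a single edge of $M$, and since $B$ is properly bicoloured, that edge is bichromatic precisely when $k$ is even and monochromatic precisely when $k$ is odd. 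Conversely, from a $2$-coloured rooted-at-black map $M$ one recovers all triples by replacing each edge $e$ of $M$ by a path whose internal vertices are new, marked, and alternately coloured (which is forced): a path of length $2j_e+1$ with $j_e\ge 0$ if $e$ is bichromatic, a path of length $2j_e$ with $j_e\ge 1$ if $e$ is monochromatic (the constraint $j_e\ge1$ also covering loops, which are always monochromatic, a loop of $M$ becoming a digon of $B$ with marked middle vertex). Summing over all families $(j_e)_e$ produces exactly the triples $(B,S_\circ,S_\bullet)$, and the two constructions are mutually inverse because the maximal runs of marked vertices of $B$ are precisely the images of the edges of $M$.

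It then remains to compare weights. The unmarked non-root vertices of $B$ are exactly the non-root vertices of $M$, with the same colours and degrees, and $\dv(B)=\dv(M)$, so the monomial in the $x_i$'s, $y_j$'s and in $x$ is preserved. Writing $k_e$ for the number of marked vertices on the path attached to the edge $e$ of $M$, and setting $k_e=2j_e$ when $e$ is bichromatic and $k_e=2j_e-1$ when $e$ is monochromatic, one gets $\ee(B)=\sum_e(k_e+1)=\ee(M)-m(M)+2\sum_e j_e$ and $|S_\circ|+|S_\bullet|=\sum_e k_e=2\sum_e j_e-m(M)$. Summing the monomial $t^{\ee(B)}v^{|S_\circ|+|S_\bullet|}$ over all admissible families $(j_e)_e$ factorises over the edges of $M$ into geometric series and gives
\[
t^{\ee(M)-m(M)}v^{-m(M)}\prod_{e\ \mathrm{bichr.}}\frac1{1-t^2v^2}\prod_{e\ \mathrm{mono.}}\frac{t^2v^2}{1-t^2v^2}=(tv)^{m(M)}\Bigl(\frac t{1-t^2v^2}\Bigr)^{\ee(M)},
\]
which is precisely the image of the $M$-monomial of $\overline M$ under $\nu\mapsto tv$, $t\mapsto t/(1-t^2v^2)$. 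Summing over $M$ yields the refined identity, and hence the first one.

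I expect the main obstacle to be the careful verification that the bijection is well defined in both directions — in particular that the decomposition of $B$ into blocks "unmarked vertex, maximal run of marked vertices, unmarked vertex" is canonical (no cyclic runs, the root-vertex never interior to a run), that loops and multiple edges are handled correctly, and that the rooting conventions on $B$ and $M$ correspond. The geometric-series bookkeeping is routine, but it is the step that pins down why the substitution must be exactly $\nu\mapsto tv$ and $t\mapsto t/(1-t^2v^2)$.
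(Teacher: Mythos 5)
Your proof is correct and is essentially the paper's own argument: the paper also proves the refined identity first (by adding, on each edge of a $2$-coloured map $M$, a sequence of degree-$2$ ``square'' vertices -- odd on monochromatic edges, even on dichromatic ones -- to obtain a properly bicoloured bipartite map), which is exactly your bijection read in the opposite direction, with your marked vertices playing the role of the square vertices. Your geometric-series bookkeeping and the absorption of the factor $1/2$ via the colour-swap are just more explicit versions of steps the paper leaves implicit.
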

\begin{proof}
  We establish directly the second identity, which implies the first
  one by specialising each $x_i$ and $y_i$ to $w$.
Take a 2-coloured planar map $M$, rooted at a black vertex.
On each edge, add a (possibly empty) sequence of square
vertices of degree 2, in such a way the resulting map is properly bicoloured. An
example is shown on Figure~\ref{fig:ising-bip}.  Every
monochromatic edge receives an odd number of square vertices, while
every dichromatic edge receives an even number of these vertices. Each
addition of a  vertex of degree 2 also results in the addition of an
edge. Since $M$ can be recovered from the bipartite map by erasing all
square vertices, the identity follows.
\end{proof}
\begin{figure}[htb]
\begin{center}
  \includegraphics[scale=0.6]{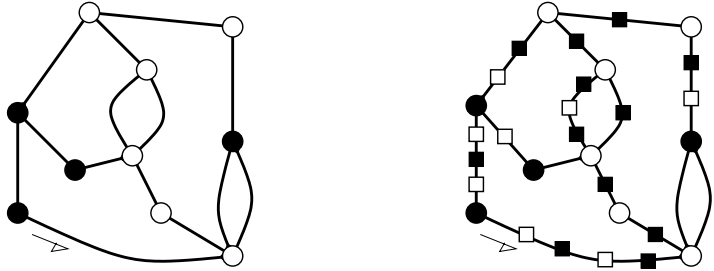}
\end{center}
\caption{A 2-coloured map and one of the associated bipartite maps.}
\label{fig:ising-bip}
\end{figure}

Recall from Sections~\ref{sec:uncoloured-rec}
and~\ref{sec:bij-uncoloured} that the enumeration of 
bipartite maps with prescribed vertex degrees can be addressed via the
recursive method (and equations with one catalytic variable)~\cite{mbm-jehanne}, bijections with blossoming
trees~\cite{mbm-schaeffer-ising}, or bijections with labelled
trees~\cite{bouttier-mobiles}.  In particular, the last two  approaches
explain bijectively\footnote{up to minor restrictions imposed at the
  root of the map} the
algebraicity of the associated \gf, at least when the vertex degrees
are bounded. 

\section{Final comments and questions}
We conclude with a number of questions raised by this survey. The first
type  of question asks what problems have an algebraic solution. Of
course, all methods (recursive, bijective, or via matrix integrals...) are
welcome to answer them. We then go on with a list of problems that have
been solved by a recursive approach, but are still waiting for a purely bijective
proof. We also mention   questions dealing with asymptotic
properties of maps.

\subsection{Algebraicity}
Theorem~\ref{thm:alg-maps} states several algebraicity results for maps with prescribed face
degrees. By comparing the
results dealing with general maps to those dealing with Eulerian
maps, it appears that our understanding of Eulerian maps with
unbounded degrees is probably still incomplete. 
\begin{qn}
Let  $D_\bullet$ and $D_\circ$ be two subsets of $\N$.  Under what
conditions on these sets is the \gf\ of Eulerian maps such that all
black (resp.~white) faces have their degree in  $D_\bullet$ (resp.~$D_\circ$) algebraic?
\end{qn}
This question can in principle be addressed via the equations
of~\cite{mbm-schaeffer-ising,bouttier-mobiles}. Algebraicity is known to hold when  $D_\bullet$ and $D_\circ$ are
finite, and when $D_\bullet=\{m\}$ and $D_\circ=m\N$. A natural
sub-case that could be addressed first is the following\footnote{Having
  raised the question, the author has started to explore it... and
  come with a positive answer~\cite{mbm-m}. Algebraicity can be proved
either via the equations of~\cite{mbm-schaeffer-ising}, or via a bijection with $(m+1)$-constellations.}.
\begin{qn}
 Is the \gf\ of Eulerian planar maps in which all face degrees are
  multiples of $m$ algebraic?
\end{qn}
Recall that for $m\ge 3$, these are the maps that admit a cyclic $m$-colouring
(Section~\ref{sec:more-func-eq}). Algebraicity has already been proved when $m=2$, that
is, for maps that are both Eulerian and
bipartite~\cite{liskovets-walsh,poulalhon-schaeffer-bip-eul}. 


Eulerian maps are required to have even vertex-degrees. But one could
think of other restrictions  than parity.
\begin{qn}
 Under what condition is the \gf\ of maps in which both the vertex
  degrees and the face degrees are constrained algebraic?
\end{qn}
This question seems of course very hard to address. A positive answer
is known in at least one case: the \gf\ of triangulations in which all
vertices have  degree at least $d$ is
algebraic for all $d$~\cite{bernardi-high-v,gao-wormald-cubic}.

\subsection{Bijections}
Our first question may seem surprising at first sight.
\begin{qn}
  Design bijections between families of trees and  families of planar maps with unbounded degrees.
\end{qn}
Indeed, it seems that all bijections that can be used to count
families of maps with unbounded degrees use a detour via maps with
bounded degrees. The simplest example, presented in Section~\ref{sec:bij-uncoloured}, is
that of general planar maps: we have first shown that they are  in bijection with 4-valent maps
(or, dually, quadrangulations), before describing  two types of bijections
between 4-valent maps and trees.
We could actually content ourselves with this situation: after all,
isn't a
combination of two beautiful bijections  twice as beautiful as
a single bijection? But there exist problems with an algebraic solution,
dealing with maps with unbounded degrees, that have not been solved
by a direct bijection so far, like the Ising model on general planar maps
(Theorem~\ref{thm:2}), or the hard-particle model on general planar
maps~\cite{mbm-jehanne}. Discovering such bijections could also give
an algebra-free proof of the fact that maps in which all degrees are
multiples of $m$ are algebraic; the bijection of~\cite{BDG-planaires}
gives indeed a proof, but requires a bit of algebra. Moreover, this
could be a purely bijective way to address the questions raised above on the
algebraicity of Eulerian maps in which all face degrees are multiples
of $m$.
%
%

\begin{qn}
  Design bijections for $q$-coloured maps. 
\end{qn}
This can take several directions: 
\begin{itemize}
\item 
find bijections for the special
values of $q$ (like $q=3$) that are known to yield algebraic \gfs\
(Theorems~\ref{thm:alg-planaires} and~\ref{thm:alg-triang});
\item find bijections for specialisations of the  Potts \gf\
  of maps, like those presented in Section~\ref{sec:bij-coloured} for spanning trees
  and bipolar orientations; 
\item finally, one would dream of designing bijections that would
  establish directly differential equations for coloured maps, starting
  with the (relatively simple?) case of
  triangulations~(see~\eqref{Tutte-ED}).
The author is currently working on  an interesting construction of
Bouttier \emm et al.,~\cite{BDG-blocked}, which  allows to count
  spanning forests on maps and to derive certain differential
  equations in a simpler way than the recursive approach~\cite{mbm-forests}.
 \end{itemize}

Finally, we have discussed in Section~\ref{sec:bij-uncoloured} two families of bijections,
but a third one could exist, as suggested by Bernardi's beautiful
construction for loopless triangulations~\cite{bernardi-kreweras}.
\begin{qn}
  Is the bijection of~\cite{bernardi-kreweras} the tip of some iceberg?
\end{qn}


\subsection{Asymptotics of maps}
\begin{qn}
What is the asymptotic number of properly $q$-coloured planar maps
having $n$ edges?  
\end{qn}
This question has been studied by Odlyzko and
Richmond~\cite{odlyzko-richmond} for triangulations, starting from the
differential equation~\eqref{Tutte-ED}. For $q \in [15/11, 4] \cup
[5,\infty)$, they proved that the number of properly $q$-coloured
triangulations with $n$ faces is of the form $\kappa \mu^n
n^{-5/2}$. The exponent $-5/2$ is typical in the enumeration of
(uncoloured) planar maps. 

The asymptotic behaviour of the number of $n$-edge $q$-coloured
planar maps has been worked out in~\cite{bernardi-mbm} for $q=2$ and $q=3$, using the 
explicit results of Theorems~\ref{thm:2} and~\ref{thm:3}. Again, the exponent is
$-5/2$. The same question can be asked when a parameter $\nu\not = 0$
weights monochromatic edges. For $q=2$, the exponent is still $-5/2$, except at the
critical value $\nu=(3+\sqrt 5)/2$, where it becomes
$-7/3$. See~\cite{Ka86,BK87} for similar results on maps of fixed
vertex degree.

The proofs of these results use the solutions of the difficult functional
equations~\eqref{eq:M} and~\eqref{eq-Tutte}. It would be extremely interesting to
be able to understand the asymptotic behaviour of these numbers (or the singular
behaviour of the associated series) \emm directly from
these equations,. At the moment, we do not known how to do this, even in
the case of one catalytic variable.
\begin{qn}
  Develop a ``singularity analysis''~\cite{flajolet-odlyzko} for equations with catalytic variables.
\end{qn}

Finally, the asymptotic geometry of random uncoloured maps has
attracted a lot of attention in the past few years~\cite{bouttier-guitter-geodesics,bouttier-guitter-three-point,chassaing-schaeffer}, and a limit
object, \emm the Brownian map,, has been identified~\cite{le-gall-maps,marckert-miermont,marckert-mokkadem}. Similar
questions can be addressed for maps equipped with an additional
structure.
\begin{qn}
  Is there a scaling limit for maps equipped with a spanning tree? a
  spanning forest? for properly $q$-coloured maps?
\end{qn}
 The final section
of~\cite{legall-miermont}  suggests a partial, and conjectural answer
to this question for maps equipped with certain statistical physics
models, including the Ising model. 
  Of course, the first point is to determine how the average distance
between two vertices of these maps scales.

\thankyou{The author gratefully acknowledges
the assistance of  Olivier Bernardi in writing this survey.}
\bibliographystyle{plain}
\bibliography{coloured.bib}

\myaddress

\end{document}